\theoremstyle{definition}
\newtheorem{thm}{Theorem}[section]
\newtheorem{algo}[thm]{Algorithm}
\newtheorem{prop}[thm]{Proposition}
\newtheorem{cor}[thm]{Corollary}
\newtheorem{lem}[thm]{Lemma}
\newtheorem{notn}[thm]{Notation}
\newtheorem{defn}[thm]{Definition}
\newtheorem{rem}[thm]{Remark}
\numberwithin{equation}{section}
\newcommand{\subjclass}[1]{\bigskip\noindent\emph{2010 Mathematics Subject Classification:}\enspace#1}
\newcommand{\keywords}[1]{\noindent\emph{Keywords:}\enspace#1}
\begin{document}


\baselineskip=17pt


\def \h{\sqrt{h}}

\def \N{\mathbb{N}}
\def \R{\mathbb{R}}
\newcommand {\sign} {\mathop \mathrm{sign}} 
\newcommand {\trace} {\mathop \textup{tr}}
\def \ae{\text{a.\ e.}}
\def \diver{\nabla_\textup{tan}\cdot}
\newcommand {\lip} {\mathop \mathrm{Lip}}
\newcommand {\dist} {\mathop \mathrm{dist}}
\def \sphere{{{S}^{d-1}}}
\def \numphases{P}
\newcommand {\supp} {\mathop \textrm{supp}}
\def \chara{\mathbf{1}}
\def \gap{\varepsilon^2} 
\def \diss{\textup{diss}}
\def \process{\mathscr{P}}
\def \quotient{Q}
\def \kmax{K}
\def \lmax{L}
\def \nmax{N}
\def \kmean{\frac1\kmax \sum_{k=1}^\kmax}
\def \lmean{\frac1\lmax \sum_{l=1}^{\lmax}}
\def \nmean{\frac1\nmax \sum_{n=1}^\nmax}
\def \tmean{\frac1T \int_0^T}
\newcommand{\textoverline}[1]{$\overline{\mbox{#1}}$}
\def \gap{\varepsilon^2}

\newcommand{\step}[1]{\par\medskip\par\noindent\textit{#1}}

\title{Convergence of thresholding schemes\\
incorporating bulk effects}

\author{Tim Laux\\
Max Planck Institute \\for Mathematics in the Sciences\\
Inselstra\ss e 22\\
04103 Leipzig\\
tim.laux@mis.mpg.de\\\\
Drew Swartz\\
Booz Allen Hamilton\\
drew.e.swartz@gmail.com}

\date{\today}

\maketitle


\begin{abstract}
In this paper we establish the convergence of three computational algorithms for interface
motion in a multi-phase system, which incorporate bulk effects.  
The algorithms considered fall under the classification 
of thresholding schemes, in the spirit of the celebrated Merriman-Bence-Osher
algorithm for producing an interface moving by mean curvature.
The schemes considered here all incorporate either a local force coming from an energy in the bulk,
or a non-local force coming from a volume constraint.
We first establish the convergence of a scheme proposed by Ruuth-Wetton for approximating
volume-preserving mean-curvature flow.  Next we study a scheme for the geometric flow
generated by surface tension plus bulk energy.  Here the limit is motion by mean curvature (MMC) plus forcing term.
Last we consider a thresholding scheme for simulating grain growth in a polycrystal surrounded by air, which 
incorporates boundary effects on the solid-vapor interface. 
The limiting flow is MMC on the inner grain boundaries, 
and volume-preserving MMC on the solid-vapor interface.

\subjclass{Primary 65M12; Secondary 35A15.}

\keywords{Mean curvature flow; Thresholding; MBO scheme; Minimizing movements; Volume preserving}
\end{abstract}

\section*{Introduction}
Surfaces moving with curvature dependent velocities is a phenomenon of interest
in many physical models.
A standard model of this type of interfacial motion is mean-curvature flow, which appears as the effective evolution equation
of grain boundaries in Mullins' model \cite{mullins1956two} or as the singular limit of the Allen-Cahn equation describing the evolution of antiphase boundaries \cite{allen1979microscopic}.
The motion law then is $V = H$, where $V$ denotes the velocity in the normal direction, 
and $H$ is the scalar mean curvature of the interface. It is a system of degenerate parabolic equations and can be regarded
as the gradient flow of the interfacial energy
w.\ r.\ t.\ the $L^2$-metric on the space of normal velocities.
A similar motion is observed in multi-phase systems where the energy depends on a possibly weighted
sum of the interfacial energies between the phases.
This is a prominent model for grain growth in polycrystals, where each phase
represents a grain, i.\ e.\ a part of the volume with homogeneous crystal structure \cite{mullins1956two}.

If one also considers bulk energies in the model, a forcing term arises in the velocity, 
leading to the equation $V = H + f$, where the force $f$ might in general be non-local.
A particular example of a non-local forcing arises when
the volume of the bulk is constrained to stay constant. 
This leads to
\emph{volume-preserving mean-curvature flow}.
Here the motion law is $V =  H - \langle H \rangle$,
where $\langle \cdot \rangle$ denotes the average over the interface.
This evolution arises for example in the modeling of metallic alloys
or Ostwald ripening, a process describing
the change in inhomogeneous structure of a dispersion.
Coarsening is observed in these processes, and the coarsening rates can be measured by collecting statistical
data from a series of experiments or numerical simulations.

For this and other purposes it is desirable to have efficient computational schemes for
producing various types of curvature driven flows.
In this paper we will examine computational models 
for the examples described above, namely,
volume-preserving motion by mean curvature, 
motion by mean curvature with a local forcing term
and a model for grain growth in polycrystals incorporating boundary effects.
The main results of this paper are the rigorous
convergence results for these algorithms, Theorems \ref{thm1}, \ref{thm2} and \ref{thm3}.

\medskip

The class of algorithms we consider are so-called \emph{thresholding algorithms}.
The idea goes back to Merriman, Bence and Osher, who introduced a nowadays highly
appreciated time discretization to generate motion by mean curvature in \cite{MBO92}.
This algorithm has colloquially become known as
the \emph{MBO scheme}. 
It is based on a time splitting for a slow-reaction fast-diffusion
process in order to bypass the numerical difficulty of multiple scales.
Starting from the phase $\Omega^0$, i.\ e.\ an open, bounded set in $\mathbb{R}^d$, 
 with characteristic function $\chara_{\Omega^0}$, one solves 
the heat equation with initial data $\chara_{\Omega^0}$ for a short time $h>0$,
i.\ e. one defines the function $\phi := G_h \ast \chara_{\Omega^0}$,
where $G_h$ denotes the heat kernel at time $h$. 
One then updates to the evolved phase $\Omega^1$ by thresholding $\phi$ at the 
value $\tfrac12$, i.\ e. taking $\Omega^1$ to be the super level set $\{\phi>\tfrac12\}$.
The procedure is then repeated with the updated set.
This scheme produces a discrete sequence of interfaces $ \Sigma^h(nh) \equiv \Sigma^n= \partial \Omega^n $.

It has been shown that MBO dynamics converge to motion by mean curvature
as $h \rightarrow 0^+$.
Rigorous convergence proofs have been established independently by Evans \cite{evans1993convergence} and
Barles and Georgelin \cite{barles1995simple}.
Their proofs rely on the fact that the scheme preserves a structural feature of mean-curvature flow,
a geometric comparison principle. This allows the authors to use the level set formulation of mean-curvature flow
which can be treated using the theory of viscosity solutions for second-order parabolic PDE.
However, a number of extensions of the MBO scheme for different curvature driven motions 
have been developed that do not satisfy a comparison principle; see e.\ g.\  
\cite{bonnetier2012consistency,EseOtt14,esedoglu2008threshold,  ishii1999threshold, RutWet03}.
This is not a weakness of these algorithms but inherent in the equations.
The convergence proofs in \cite{barles1995simple,evans1993convergence} do not apply in these cases.

Two more recent proofs have established the convergence of MBO, but do not rely on a comparison
principle.
Using asymptotic techniques, Yip and the second author \cite{SwaYip15} established 
a short-time convergence result along with quantitative properties 
such as convergence rate and bounds on curvature growth.  
Otto and the first author \cite{LauOtt15} established a conditional long-time convergence result
also for the case of multiple phases by exploiting the gradient flow structure.
In this paper, we show how to adapt the proof of the second approach \cite{LauOtt15} to the situations mentioned above.

\medskip

Ruuth and Wetton \cite{RutWet03} extended the thresholding scheme to produce an interface
moving by volume-preserving mean-curvature flow. Here one simply changes
the threshold parameter from $\tfrac12$ to the value $\lambda \in (0,1)$
such that the volume is preserved, i.\ e.\ $\left | \{ \phi > \lambda \} \right| = | \Omega^0 |$.  
In Section \ref{sec:vol} we will provide a convergence proof for this scheme, cf. Theorem \ref{thm1}.

The inspiration for changing the threshold value comes from
Mascarenhas in \cite{mascarenhas1992diffusion} who simulates an affine forcing
term. He observes that changing the threshold value from $\tfrac12$
to $\tfrac12 - \tfrac{f}{2\sqrt{ \pi}} \sqrt{h}$  seems to produce approximate solutions to
$V=H+f$ for a constant force $f$.  In Section \ref{sec:force} we adapt this idea to produce a 
thresholding scheme for interfaces moving by mean curvature plus a local forcing term, i.\ e.\
$V = H + f$ with a space-time dependent force $f=f(x,t)$.  
In addition we give a convergence proof of this scheme in Theorem 2.

The above mentioned schemes extend naturally to multi-phase motions if one assumes equal surface
tensions between the phases, cf. \cite{MBO94}.
The extension to arbitrary surface tensions by
Esedo\u{g}lu and Otto in \cite{EseOtt14} is less obvious and comes from an energetic view-point on which we will comment in the next paragraph.
In \cite{elsey2011large}, Elsey, Esedo\u{g}lu and Smereka use
the multi-phase schemes to perform large-scale computational
simulations for grain growth in polycrystals.
Convergence of the algorithm in \cite{EseOtt14} was recently established
in \cite{LauOtt15}. 
These simulations assume periodic boundary conditions and are therefore restricted to the interior behavior in a polycrystal.
Taking into account boundary effects on the solid-vapor interface is more difficult.
It is known that the outer boundary of a polycrystal moves by \emph{surface diffusion}, which is a fourth order flow.
However, computational simulations involving fourth order flows present various challenges.
In Section \ref{sec:crystal} we discuss a simpler algorithm proposed by Esedo\u{g}lu and Jin in \cite{An15} for approximating
these effects.  
They consider an algorithm which replaces surface diffusion, the fourth order local motion law on the outer boundary of the polycrystal, by volume-preserving
mean-curvature flow,
a second order but non-local equation.
This is plausible because both motions are
volume preserving and (due to the gradient flow structure) energy dissipative flows for the area functional. Simulations for this model have been performed in \cite{An15}, 
demonstrating that the model is reasonable and captures the typical effect of surface grooving.  However it is admittedly not perfect, as it is also shown that for large
numbers of grains ($\sim 10^3$), non-physical phenomenon are observed in the simulations.
In Theorem \ref{thm3} we show that the proof in \cite{LauOtt15} can also be applied in this situation under some moderate modeling assumptions.  
The limiting motion is shown to be mean-curvature flow on the inner
grain boundaries, and volume-preserving mean-curvature flow on the outer boundary of the whole polycrystal.

\medskip

The basis of our proofs is the interpretation of the MBO scheme as a \emph{minimizing movements} scheme by Esedo\u{g}lu and Otto in \cite{EseOtt14}.
Minimizing movements is a natural time-discretization of a gradient flow which can be seen as a generalization of the implicit Euler scheme. 
It was introduced by De Giorgi in the general framework \cite{de1993new} and for mean-curvature flow by Almgren, Taylor and Wang in
\cite{ATW93} and Luckhaus and Sturzenhecker in \cite{LucStu95}. 
Let us elaborate more on the connection between thresholding schemes and minimizing movements drawn in \cite{EseOtt14} in the 
case of two phases. The functional $E_h(\chi)=\frac1\h \int \left(1-\chi \right) G_h\ast \chi \,dx$
is an approximation of the perimeter of the set $\{\chi=1\}$. 
Indeed, it was shown in \cite{miranda2007short} and later on with different techniques in \cite{EseOtt14} that these functionals $\Gamma$-converge to 
$E(\chi) = \frac1{\sqrt{\pi}} \int \left| \nabla \chi\right|$ as $h\to0$. 
It is the case that MBO is equivalent to running minimizing movements for dissipating $E_h$, 
where $D_h(\omega) = \frac1\h \int \omega \,G_h\ast \omega \, dx$ is the metric term penalizing distances between two sets.
More specifically, starting with an initial set $\Omega^0\subset \R^d$, 
setting $\chi^0:= \chara_{\Omega^0}$ to be the characteristic function of this set, it turns out that the sets $\Omega^n = \{\chi^n =1 \}$ 
generated by the MBO scheme can be characterized by
\begin{align*}
\chi^n = \arg\min \left\{ E_h(\chi^n) + D_h(\chi^n-\chi^{n-1})\right\}.
\end{align*}
This allows for energetic techniques used in the study of gradient flows. 
We show in Lemma \ref{la_MM} that this structural property is conserved in the case of the scheme for volume-preserving mean-curvature flow
in \cite{RutWet03}. In particular, we have the important a priori estimate (\ref{ED-estimate}).
Most recently Mugnai, Seis and Spadaro \cite{mugnai2015global} 
studied a volume-preserving variant of the above mentioned minimizing movements scheme \cite{ATW93, LucStu95} and proved a conditional
convergence result in the same way as Luckhaus and Sturzenhecker. In the proof of Theorem \ref{thm1} we face similar issues as the ones in that work. 
Bellettini, Caselles, Chambolle and Novaga \cite{bellettini2009volume} studied anisotropic versions of mean-curvature flow starting from \emph{convex} sets.
In particular they proved convergence of the thresholding scheme with uniformly bounded forcing terms. 
Furthermore, they considered a variant of the volume-preserving scheme \cite{RutWet03}
where the volume is not precisely preserved in the approximation but still in the limit when the time-step size goes to zero.
They are able prove uniform bounds on the resulting forcing term. In contrast, we work with the exact constraint on the volume
and only work with an $L^2$-bound on the forcing term coming from the Lagrange multipliers associated to the volume constraint.
We establish this bound in Proposition \ref{L2 bounds lambda}.
In Lemma \ref{1d lemma} we generalize the one-dimensional estimate Lemma 4.2 and Corollary 4.3 in \cite{LauOtt15} to our situation where the threshold 
value may differ from $\frac12$.
A common thread in the above mentioned works \cite{LauOtt15,LucStu95, mugnai2015global}, and in ours as well, is an area-convergence assumption, here (\ref{conv_ass}).
This assumption prevents a sudden loss of interfacial area as the time step tends to zero which is not guaranteed by the a priori estimate (\ref{ED-estimate}).
It is an interesting task to validate this assumption, even for the classical MBO scheme, under convexity assumptions on the initial phase.

\section{Volume-preserving mean-curvature flow}\label{sec:vol}
In this section, we discuss a scheme for volume-preserving motion by mean curvature, here Algorithm \ref{MBO_volume}, 
which was introduced by Ruuth and Wetton in \cite{RutWet03}.
We first state the algorithm and fix the notation, and present the main result of this section in Theorem \ref{thm1}. 
Following this we give the details of the proof of the theorem.

\subsection{Algorithm and notation}
\begin{algo}\label{MBO_volume}
Given the phase $\Omega$, i.\ e.\ an open, bounded set in $\R^d$, with $|\Omega|=1$ at time $t=(n-1)h$, 
obtain the evolved phase $\Omega'$ at time $t=nh$ by:
\begin{enumerate}
\vspace{-3pt}
 \item Convolution step:
$
 \phi := G_h\ast \chara_{\Omega}.
$
  \item Defining threshold value: Pick $\lambda$ such that 
  $
  \left| \left\{ \phi >  \lambda \right\} \right| = 1.
  $
  \item Thresholding step:
  $
  \Omega' := \left\{ \phi >  \lambda \right\}.
  $
\end{enumerate}
\end{algo}
Here and throughout the paper
\begin{align*}
	G_h(z) := \frac{1}{(4\pi h)^{d/2}} \exp\left(-\frac{|z|^2}{4h}\right)
\end{align*}
denotes the heat kernel at time $h$.

\begin{rem}
In general, the threshold value $\lambda$ is not necessarily a regular value of $\phi$,
so that a priori we cannot say that the function  $s\mapsto \left| \left\{ \phi >  s \right\} \right|$
will attain the value $1$ for any $s\in [0,1]$. 
Since by Sard's Lemma a.\ e.\ value of $\phi$ is a regular value, this practically does not happen in simulations.
Therefore, as in \cite{RutWet03}, we ignore this fact in stating the algorithm.
Our analysis also works if one replaces the second step of the scheme by defining $\lambda$ via 
$$
\lambda:= \inf \{s>0 \colon \left| \left\{ \phi >  s \right\} \right|<1\}
$$
and then chooses the updated set in the following way:
$$
\left\{ \phi >  \lambda \right\}\subset \Omega'\subset \left\{ \phi \geq  \lambda \right\}
\quad
\text{such that }
\quad
 \left| \Omega' \right| = 1.
$$
\end{rem}
\begin{notn}
We denote the characteristic function of  $\Omega^n$ at the $n$-th time step
by $\chi^n$, i.\ e.
\begin{align*}
 \chi^n := \chara_{\Omega^h}\big|_{t=nh} \equiv  \chara_{\Omega^n}
\end{align*}
and interpolate these functions piecewise constantly in time, i.\ e.
\begin{align*}
 \chi^h(t) := \chi^n\quad \text{for } t\in [nh,(n+1)h).
\end{align*}
As in \cite{EseOtt14}, here for the two-phase case, we define the following approximate energies
\begin{align}\label{def_E}
  E_h(\chi) := \frac1{\sqrt{h}}  \int \left(1-\chi\right) G_h\ast  \chi \,dx,
\end{align}
for $\chi\colon \R^d\to \{0,1\}$
and the approximate dissipation functionals as
\begin{align}\label{def_D}
 D_h(\omega) := \frac1{\sqrt{h}}  \int \omega \,G_h\ast \omega \,dx
\end{align}
for any $\omega\colon \R^d \to \{-1,0,1\}$.
\end{notn}
\begin{rem}
As $h\to0$, the approximate energies $E_h$ $\Gamma$-converge to the perimeter functional
\begin{align*}
 E(\chi) := \frac1{\sqrt{\pi}} \int \left| \nabla\chi\right|
\end{align*}
 w.\ r.\ t. the $L^1$-topology.
Esedo\u{g}lu and Otto proved in \cite{EseOtt14} that this $\Gamma$-convergence which has already been established by Miranda et.\ al. in 
\cite{miranda2007short}
is a consequence of pointwise convergence of the functionals, namely
\begin{align}\label{EhtoE}
 E_h(\chi)\to E(\chi) \quad \text{for any } \chi\in\{0,1\},
\end{align}
and the following approximate monotonicity: For any $0<h\leq h_0$ and any $\chi\in \{0,1\}$,
\begin{align}\label{appr_mon}
 E_h(\chi) \geq \left( \frac{\sqrt{h_0}}{\h + \sqrt{h_0}} \right)^{d+1}E_{h_0}(\chi).
\end{align}
\end{rem}

Our main result of this section, Theorem \ref{thm1}, establishes the convergence of the scheme towards the following weak formulation
of volume-preserving mean-curvature flow which was also used by Mugnai, Seis and Spadaro \cite{mugnai2015global} and is the analogue of 
the formulation used by Luckhaus and Sturzenhecker without the volume constraint \cite{LucStu95}.

\begin{defn}[Volume-preserving motion by mean curvature]\label{def_motion_by_mean_curvature}
 We say that
$
\chi: (0,T)\times \R^d
\to \{0,1\} 
$
is a \emph{solution to the volume-preserving mean-curvature flow equation with initial data $\chi^0$} 
if there exists a function
$V\colon (0,T)\times\R^d\to \R$ with $V\in L^2(\left| \nabla \chi\right|dt)$ such that
\begin{align}\label{H=v}
\int_0^T \int \left( \nabla\cdot \xi - \nu\cdot \nabla \xi \, \nu\right) \left|\nabla\chi\right|  dt
 = \int_0^T \int \left(V + \Lambda\right) \, \xi \cdot \nu \left| \nabla \chi \right| dt
\end{align}
for any $\xi \in C_0^\infty((0,T)\times \R^d)$ and
\begin{align}\label{v=dtX}
\int_0^T \int \partial_t \zeta \, \chi\, dx\,dt + \int \zeta(0)\, \chi^0 \,dx
= - \int_0^T \int \zeta \,V\left| \nabla \chi\right|dt
\end{align}
for all $\zeta\in C_0^\infty([0,T)\times \R^d)$,
where $\Lambda \in L^2 (0,T) $ is the average of the generalized mean curvature $H\in L^2(\left| \nabla \chi\right|dt)$ of $\chi$:
\begin{align}\label{average mean curvature}
 \Lambda := \langle H \rangle = \frac{\int H \left| \nabla \chi \right|}{\int \left| \nabla \chi \right|} .
\end{align}
\end{defn}
\begin{rem}
For our convergence proof we assume the following convergence of the energies which is not guaranteed by the a priori estimates we have at hand:

\begin{align}\label{conv_ass}
 \int_0^T E_h(\chi^h)\,dt \to \int_0^T E(\chi)\,dt.
\end{align}
\end{rem}
In the following we prove Theorem \ref{thm1} using the techniques from \cite{LauOtt15}. 
Throughout this section, we write $A\lesssim B$ if there exists a constant $C=C(d)<\infty$ such that $A\leq C B$.
Combining (\ref{EhtoE}) and (\ref{appr_mon}), we have
\begin{align}\label{Eh<E}
 E_0 := E(\chi^0) \geq E_h(\chi^0).
\end{align}
Furthermore by scaling we can normalize the prescribed volume $|\Omega^0| = \int \chi^0\,dx =1$.

\subsection{Minimizing movements interpretation}
%
%
In the following lemma we elaborate the interpretation of Algorithm \ref{MBO_volume} as a minimizing movements scheme which is the starting point
of the convergence proof.
\begin{lem}[Minimizing movements interpretation]\label{la_MM}
 Given $\chi^0\in \{0,1\}$ with $\int \chi^0\,dx =1$, let $\phi$, $\lambda$ and  $\chi^1$ be obtained by Algorithm \ref{MBO_volume}.
 Then $\chi^1$ solves
\begin{align}\label{MM_lagrange}
 \min \quad  E_h( \chi) + D_h(\chi-\chi^0) 
+ \frac{2\lambda-1}{\h}\int \chi\,dx,
\end{align}
where the minimum runs over all $\chi\colon \R^d \to \{0,1\}$.
Or equivalently
\begin{align}\label{MM_constraint}
 \min \quad  E_h( \chi) + D_h(\chi-\chi^0) \quad \textup{s.\ t.}
\quad  \int\chi \,dx= 1, \quad \chi\in\{0,1\}.
\end{align}
\end{lem}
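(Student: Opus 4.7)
The plan is to exploit the fact that $\chi\in\{0,1\}$ is idempotent ($\chi^2=\chi$) so that $E_h$ and $D_h$ combine into a \emph{linear} functional of $\chi$, plus a constant that depends only on the given $\chi^0$. Then the minimization reduces to a pointwise thresholding of the corresponding linear coefficient, which is precisely $\phi-\lambda$ up to a positive multiplicative factor.

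Concretely, I would first expand
\begin{align*}
E_h(\chi) + D_h(\chi-\chi^0)
= \frac{1}{\sqrt{h}} \int (1-\chi)\,G_h\ast\chi\,dx + \frac{1}{\sqrt{h}} \int (\chi-\chi^0)\,G_h\ast(\chi-\chi^0)\,dx,
\end{align*}
use $\int G_h\ast\chi\,dx = \int \chi\,dx$ (because $G_h$ integrates to $1$) to replace $\int (1-\chi) G_h\ast\chi\,dx$ by $\int\chi\,dx - \int \chi\, G_h\ast\chi\,dx$, and then cancel the quadratic term $\int \chi\,G_h\ast\chi\,dx$ against the corresponding term coming from $D_h$. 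This reduces the sum to
\begin{align*}
E_h(\chi) + D_h(\chi-\chi^0) = \frac{1}{\sqrt{h}} \int \chi\,\bigl(1 - 2\, G_h\ast\chi^0\bigr)\,dx + \frac{1}{\sqrt{h}} \int \chi^0\, G_h\ast\chi^0\,dx,
\end{align*}
where only the first term depends on $\chi$.

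Adding the multiplier term $\frac{2\lambda-1}{\sqrt{h}}\int\chi\,dx$ and using $\phi=G_h\ast\chi^0$, the objective in (\ref{MM_lagrange}) becomes
\begin{align*}
-\frac{2}{\sqrt{h}} \int \chi\,(\phi-\lambda)\,dx + \text{const}(\chi^0).
\end{align*}
Minimizing this over $\chi\colon\R^d\to\{0,1\}$ is pointwise: we must take $\chi(x)=1$ whenever $\phi(x)-\lambda>0$ and $\chi(x)=0$ whenever $\phi(x)-\lambda<0$, which (modulo the measure-zero level set $\{\phi=\lambda\}$) identifies the minimizer with $\chi^1=\chara_{\{\phi>\lambda\}}$ given by Algorithm \ref{MBO_volume}.

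For the equivalence of (\ref{MM_lagrange}) and (\ref{MM_constraint}), I would run the standard Lagrangian argument. Since $\lambda$ is chosen so that $|\{\phi>\lambda\}|=1$, the minimizer $\chi^1$ of (\ref{MM_lagrange}) automatically satisfies the volume constraint $\int\chi^1\,dx=1$, and is therefore admissible for (\ref{MM_constraint}). Conversely, for any admissible competitor $\tilde\chi\in\{0,1\}$ with $\int\tilde\chi\,dx=1$, the Lagrange-multiplier terms evaluated at $\chi^1$ and $\tilde\chi$ agree, so the inequality from (\ref{MM_lagrange}) reduces to the desired inequality $E_h(\chi^1)+D_h(\chi^1-\chi^0)\leq E_h(\tilde\chi)+D_h(\tilde\chi-\chi^0)$. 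There is no real obstacle here; the only slightly delicate point is the handling of the level set $\{\phi=\lambda\}$, which is harmless because Algorithm \ref{MBO_volume} is stated (as noted in the remark following it) up to this measure-zero ambiguity and the value of the objective is insensitive to the choice of $\chi$ on $\{\phi=\lambda\}$ when the volume constraint is imposed.
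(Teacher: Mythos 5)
Your proposal is correct and follows essentially the same route as the paper: your explicit expansion of $E_h(\chi)+D_h(\chi-\chi^0)$ into a linear functional of $\chi$ plus a $\chi^0$-dependent constant is exactly the paper's identity reducing the objective to the linearized energy $L_{\lambda,h}(\phi,\chi)$, and your pointwise thresholding of the coefficient $\phi-\lambda$ is the paper's observation that $\chi^1$ attains the pointwise lower bound $\phi\wedge(2\lambda-\phi)$. The passage between the Lagrangian and constrained formulations is also handled the same way in both arguments.
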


\begin{proof}
First we show that (\ref{MM_lagrange}) is equivalent to minimizing the \lq linearized energy\rq
\begin{align}\label{def_L}
 L_{\lambda,h}(\phi,\chi) := 
\frac1\h \int \left(1-\chi\right)\phi
 + \chi \left(2\lambda -\phi\right)dx,
\end{align}
over $\chi \colon \mathbb{R}^d \rightarrow \{0,1\}$. Indeed, this is just a consequence of the fact that
\begin{align}\label{simple_computation}
E_h(\chi) + D_h(\chi - \chi^0) + \frac{2\lambda-1}{\h}\int \chi\,dx  = 
L_{\lambda, h}(\phi, \chi) + \text{Terms depending only on } \chi^0,
\end{align}

Second we show that (\ref{MM_constraint}) is equivalent to minimizing $L_{\lambda, h}(\phi,\chi)$ over 
$\chi \colon \mathbb{R}^d \rightarrow \{0,1\}$ such that $\int \chi \,dx = 1$.  This again follows from
(\ref{simple_computation}) and the fact that $\frac{2\lambda-1}{\h}\int \chi\,dx$ is a constant in this case.

Finally we show that $\chi^1$ as obtained through Algorithm \ref{MBO_volume} minimizes $L_{\lambda, h}(\phi,\chi)$ over $\chi \colon \mathbb{R}^d \rightarrow \{0,1\}$ (and therefore also minimizes $L_{\lambda, h}(\phi,\chi)$ over this class when the unit volume constraint is enforced). To see this, note that the integrand is clearly bounded below by $\phi \wedge \left(2\lambda - \phi \right)$ for any $\chi\in \{0,1\}$.
And by definition, $\chi^1$ admits this minimum pointwise:
\[
\left(1 - \chi^1\right) \phi + \chi^1 \left( 2\lambda - \phi\right) = \phi \wedge \left(2\lambda - \phi \right).\qedhere
\]
\end{proof}

%
%
The following a priori estimate is a direct consequence of the minimizing movements interpretation but is a very important tool to prove compactness of
the approximate solutions.
\begin{lem}[Energy-dissipation estimate]
The approximate solutions $\chi^h$ satisfy the following energy-dissipation estimate
\begin{align}\label{ED-estimate}
E_h(\chi^N) + \sum_{n=1}^N D_h(\chi^n-\chi^{n-1}) \leq E_0.
\end{align}
\end{lem}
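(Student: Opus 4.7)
The plan is to derive the estimate as a one-step consequence of the minimizing movements interpretation in Lemma \ref{la_MM}, then telescope.

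First I would fix $n\geq 1$ and note that, by construction of the algorithm, $\int\chi^{n-1}\,dx = 1$, so $\chi^{n-1}$ itself is an admissible competitor in the constrained minimization problem (\ref{MM_constraint}) for the $n$-th step (with previous state $\chi^{n-1}$). Testing the minimality of $\chi^n$ against this competitor yields
\begin{align*}
E_h(\chi^n) + D_h(\chi^n - \chi^{n-1}) \leq E_h(\chi^{n-1}) + D_h(0) = E_h(\chi^{n-1}),
\end{align*}
using that $D_h$ vanishes at zero by its definition (\ref{def_D}).

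Next I would sum this one-step inequality for $n=1,\ldots,N$. The energy terms telescope, producing
\begin{align*}
E_h(\chi^N) + \sum_{n=1}^N D_h(\chi^n - \chi^{n-1}) \leq E_h(\chi^0).
\end{align*}
Finally, I would invoke (\ref{Eh<E}) to bound $E_h(\chi^0) \leq E_0$, which yields the claim.

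There is no real obstacle here: the work has already been done in Lemma \ref{la_MM}, and the only thing to check is the admissibility of the competitor $\chi^{n-1}$, which follows from the volume-preserving nature of the scheme at each step. Since the minimization (\ref{MM_constraint}) is performed over unit-volume indicator functions and $\chi^{n-1}$ is such a function, no adjustment of the test function is needed, unlike in some versions of volume-preserving minimizing movements where one must perturb to enforce the constraint.
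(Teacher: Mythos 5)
Your proposal is correct and is essentially identical to the paper's own proof: compare $\chi^n$ against the admissible competitor $\chi^{n-1}$ in the constrained minimization (\ref{MM_constraint}), iterate the resulting one-step inequality, and conclude with (\ref{Eh<E}). The extra remarks on admissibility and $D_h(0)=0$ are correct details that the paper leaves implicit.
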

\begin{proof}
 As a direct consequence of the minimization procedure (\ref{MM_constraint}) we obtain
 \[ 
 E_h(\chi^n) + D_h(\chi^n - \chi^{n-1} ) \leq E_h( \chi^{n-1} ). 
 \]
Iterating this estimate from $n=1$ to $N$ together with (\ref{Eh<E}) yields the claim.
\end{proof}

Above we used the minimizing movements interpretation to derive
an easy a priori estimate by comparing the solution $\chi^n$ to its predecessor $\chi^{n-1}$.
Now we use this interpretation to derive an optimality condition, the Euler-Lagrange
equation associated to the functional
\[
 E_h(\chi) + D_h(\chi - \chi^0) + \dfrac{2\lambda -1}{\sqrt{h}} \int \chi \,dx.
\]
This will be an important component of our convergence proof.  To state this precisely,
let us first define the notion of first variation of $E_h(\cdot)$ and $D_h(\cdot - \chi^0)$.
Since we are considering characteristic functions of sets, which induces the ``constraint'' $\chi\in\{0,1\}$, the
correct variations are \emph{inner} variations, i.\ e.\ variations of the independent variable.
Geometrically this corresponds to a deformation of the phase $\Omega$.

%
%
\begin{defn}[First variation]
 For any $\chi\in\{0,1\}$ and $\xi \in C_0^\infty(\R^d,\R^d)$ let $\chi_s$ be generated by the flow of $\xi$, i.\ e. $\chi_s$
 solves the following distributional equation:
\begin{align*}
 \partial_s \chi_s + \xi \cdot \nabla \chi_s =0.
\end{align*}
We denote the first variation along this flow by
\begin{align*}
 \delta E_h (\chi,\xi):= \frac{d}{ds} E_h(\chi_s)\big|_{s=0},\quad 
\delta D_h (\,\cdot\, - \tilde \chi)(\chi,\xi):= \frac{d}{ds} D_h(\chi_s-\tilde \chi)\big|_{s=0},
\end{align*}
where $\tilde \chi\in \{0,1\}$ is fixed.
\end{defn}

%
%
\begin{cor}[Euler-Lagrange equation]\label{cor_ELG}
 Given $\chi^0\in \{0,1\}$, let $\chi^1$ be obtained by Algorithm \ref{MBO_volume} with threshold value $\lambda$.
Then $\chi^1$ solves the Euler-Lagrange equation associated to (\ref{MM_lagrange}):
\begin{align}\label{ELG}
 \delta E_h (\chi^1,\xi)+\delta D_h (\,\cdot\, - \chi^0)(\chi^1,\xi) 
+\frac{2\lambda-1}{\h}\int\left(\nabla\cdot \xi\right) \chi^1\,dx =0.
\end{align}
\end{cor}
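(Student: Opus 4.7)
The plan is to derive the Euler-Lagrange equation directly from the minimizing movements interpretation established in Lemma \ref{la_MM}. The key point is that, although $\chi^1$ lives in the non-convex class $\{0,1\}$ so that outer variations $\chi^1 + s\eta$ are forbidden, inner variations along a flow preserve this class. Thus minimality will yield an honest stationarity condition.

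First I would fix $\xi \in C_0^\infty(\R^d, \R^d)$ and let $\Phi_s$ denote its flow, i.\ e.\ the family of diffeomorphisms with $\partial_s \Phi_s = \xi\circ \Phi_s$ and $\Phi_0 = \textup{id}$. Then $\chi^1_s(x) := \chi^1(\Phi_{-s}(x))$ is the characteristic function of the transported set $\Phi_s(\{\chi^1 = 1\})$, so $\chi^1_s \in \{0,1\}$ for every $s$, and a short computation shows $\partial_s \chi^1_s + \xi\cdot \nabla \chi^1_s = 0$ at $s=0$, making $\chi^1_s$ the admissible competitor produced by the definition of first variation.

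Next I would invoke Lemma \ref{la_MM}: $\chi^1$ minimizes
\[
F(\chi) := E_h(\chi) + D_h(\chi - \chi^0) + \frac{2\lambda-1}{\h}\int \chi\,dx
\]
over all $\chi\colon \R^d \to \{0,1\}$. Since $\chi^1_s$ is admissible and equals $\chi^1$ at $s=0$, differentiating the scalar function $s\mapsto F(\chi^1_s)$ and setting it to zero gives $\frac{d}{ds}\big|_{s=0} F(\chi^1_s)=0$. By the definition of $\delta E_h$ and $\delta D_h(\,\cdot\, - \chi^0)$, the first two summands of $F(\chi^1_s)$ differentiate to $\delta E_h(\chi^1,\xi) + \delta D_h(\,\cdot\, - \chi^0)(\chi^1,\xi)$ at $s=0$.

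For the remaining (volume) term, I would use the change of variables $y = \Phi_{-s}(x)$:
\[
\int \chi^1_s(x)\,dx = \int \chi^1(y)\, \det D\Phi_s(y)\,dy,
\]
and then use the Jacobi formula $\frac{d}{ds}\big|_{s=0}\det D\Phi_s = \nabla\cdot \xi$ to get
\[
\frac{d}{ds}\bigg|_{s=0}\int \chi^1_s\,dx = \int \chi^1 \,(\nabla\cdot \xi)\,dx.
\]
Multiplying by the coefficient $\tfrac{2\lambda-1}{\h}$ and summing gives exactly (\ref{ELG}). The argument is essentially a bookkeeping exercise; the only conceptually delicate point is the observation that inner variations preserve the constraint $\chi \in \{0,1\}$, which is what allows the variational argument to go through despite the non-convex target space. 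There is no real obstacle beyond that.
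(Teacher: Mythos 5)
Your proposal is correct and takes essentially the same approach as the paper, which merely remarks that \eqref{ELG} ``follows directly from the minimizing movements interpretation \eqref{MM_lagrange}''. Your argument --- inner variations $\chi^1_s = \chi^1\circ\Phi_{-s}$ preserve the class $\{0,1\}$, stationarity of $s\mapsto F(\chi^1_s)$ at the minimizer $s=0$, and Jacobi's formula $\frac{d}{ds}\big|_{s=0}\det D\Phi_s = \nabla\cdot\xi$ for the volume term --- is exactly the fleshed-out version of that remark.
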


Equation \eqref{ELG} follows directly from the minimizing movements interpretation \eqref{MM_lagrange} and can be regarded as an approximate version of the weak formulation \eqref{H=v}.
One can easily compute the formal limit of each single term. A formal expansion suggests that with $H$ denoting the mean curvature of $\partial \Omega^1$ and $V$ denoting
the normal velocity moving $\partial \Omega^0$ to $\partial \Omega^1$ in time $h$ we have
\[
 \delta E_h (\chi^1,\xi) \approx \frac1{\sqrt{\pi}} \int_{\partial \Omega^1} H \, \xi \cdot \nu
\quad \text{and}\quad
 \delta D_h (\,\cdot\, - \chi^0)(\chi^1,\xi) \approx -\frac1{\sqrt{\pi}} \int_{\partial \Omega^1}V\,\xi \cdot \nu.
\]
Therefore, at least formally, \eqref{ELG} is similar to the desired equation $V=H-\langle H\rangle$.
In our rigorous justification we will interpret the terms in a weak sense and use the strategy
of \cite{LauOtt15}.
Following the lines of \cite{LauOtt15}, we can also compute the first variation $\delta E_h$ of the energy rigorously and obtain
\begin{align}
\delta E_h (\chi,\xi) 
= &\frac1\h \int \xi\cdot \nabla\chi \,G_h\ast \chi
 - \left(1-\chi \right)G_h \ast \left( \xi\cdot \nabla \chi\right)dx	\notag\\
= & \frac1\h \int \xi \cdot \left[ \left(1-\chi\right) \nabla G_h \ast \chi \right]
- \left(1-\chi\right) \nabla G_h \ast \left(\xi \,\chi\right) dx	\label{compute dE}\\
+& \frac1\h \int \left( \nabla\cdot \xi\right)\left(1-\chi\right)  G_h \ast  \chi
+ \left(1-\chi\right)  G_h \ast \left(\left(\nabla\cdot\xi\right) \chi\right)dx. \notag
\end{align}
Expanding $\xi(x)-\xi(x-z) = \left(z\cdot \nabla\right) \xi(x) +O(|z|^2)$ for the first right-hand side integral,
and $\left(\nabla\cdot \xi\right) (x-z) = \left(\nabla\cdot \xi\right) (x) +O(|z|)$ for the second we obtain
\begin{equation}\label{compute dE 2}
  \delta E_h(\chi,\xi) = \frac1\h \int \nabla \xi \colon \left(1-\chi\right) \left( G_h\, Id - 2h \nabla G_h\right) \ast \chi\, dx + o(1),
 \end{equation}
 as $h\to0$.   
The integral on the right hand side formally converges to $\frac{1}{\sqrt{\pi}} \int \nabla \xi \colon \left (Id - \nu\otimes\nu \right) \left | \nabla \chi \right |$, and can be made rigorous.
 We will discuss this fact below in Proposition \ref{deltaE}.
For the first variation of the dissipation we can expand $\xi$ again and obtain
\[
 \delta D_h (\,\cdot\, - \chi^0)(\chi^1,\xi) = -2\int \frac{\chi^1-\chi^0}{h} \xi \cdot \h \nabla G_h\ast \chi^1\,dx +o(1),
\]
where the firstfactor in the right-hand side integral is a finite difference and formally converges to $\partial_t \chi = V\,|\nabla\chi|$, and the second factor formally converges to
$\frac{1}{2\sqrt{\pi}} \nu$.
The rigorous justification of this fact is more involved since one has to pass to the limit in a product of two weakly converging terms.
We will show how to overcome this difficulty in the following.

\subsection{Main result}
From (\ref{ELG}) we establish convergence to the weak formulation of volume-preserving mean-
curvature flow in Definition \ref{def_motion_by_mean_curvature}.
The central novelties of this section are establishing the equivalence of
(\ref{ELG}) to Algorithm \ref{MBO_volume}, which was done above, and to show that the
threshold value $\lambda$ remains close to $\frac{1}{2}$ in a certain sense, which is done
in Prop. \ref{L2 bounds lambda} below. The latter property plays an important role
in showing that each of the three terms of (\ref{ELG}) converges to its respective limit.
The mean curvature is recovered as the limit of the first variation $\delta E_h$ of the
energies (c.f. Prop. \ref{deltaE}), and the normal velocity is recovered as the limit of the
first variation $\delta D_h$ of the dissipation (c.f. Prop. \ref{deltaD}).
Doing so is similar to results in
\cite{LauOtt15}, however technical difficulties must be overcome due to the fact that the threshold
parameter $\lambda$ may vary (as opposed to being fixed at $\frac{1}{2}$ in the original MBO scheme).
The averaged mean curvature is recovered as the limit of the Lagrange multipliers, c.f. proof of Thm.\ref{thm1}.

We now state and prove the main result of this section, Theorem \ref{thm1} below.
Under the same convergence assumption as in \cite{LauOtt15} which is inspired by the assumption in \cite{LucStu95} we can prove the convergence of the scheme.
 For clarity of presentation, the given proof merely highlights the main ideas involved in establishing the convergence of (\ref{ELG}) to 
(\ref{H=v}).  The more technical aspects of the proof are then
postponed to later subsections (c.f. Props. 
\ref{L2 bounds lambda}, \ref{compactness}, \ref{tightness}, \ref{deltaE}, \ref{deltaD}).
\begin{thm}\label{thm1}
Let $T<\infty$ and $\chi^0\in\{0,1\}$ with $E(\chi^0) <\infty$ and $\{\chi^0=1\} \subset \subset \R^d$.
After passage to a subsequence, the functions $\chi^h$ obtained by Algorithm \ref{MBO_volume}
converge to a function $\chi$ in $L^1((0,T)\times \R^d)$. Under the convergence assumption
(\ref{conv_ass}), $\chi$ is a solution of the volume-preserving mean-curvature flow equation
in the sense of Definition \ref{def_motion_by_mean_curvature}.
\end{thm}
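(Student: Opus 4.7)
My plan is to follow the strategy of \cite{LauOtt15} and split the argument into (i) compactness, (ii) passage to the limit in the Euler--Lagrange equation \eqref{ELG}, and (iii) identification of the limiting PDE. For compactness, the energy-dissipation estimate \eqref{ED-estimate} gives both a uniform bound on $E_h(\chi^h(t))$ (which, via approximate monotonicity \eqref{appr_mon} and pointwise convergence \eqref{EhtoE}, yields spatial compactness of the time-slices) and control on the sum of the dissipations $D_h(\chi^n-\chi^{n-1})$ (which yields time regularity of $\chi^h$ sufficient for Aubin--Lions type compactness in $L^1((0,T)\times\mathbb{R}^d)$). Uniform compact support of $\chi^h(t)$ on $[0,T]$ is needed to rule out escape at infinity; this should come from a standard finite-propagation argument exploiting that $G_h$ decays like $e^{-|x|^2/4h}$ and that $\lambda$ is bounded away from $0$ uniformly in $h$, iterated over $n\le T/h$ steps. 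I would then pass to a subsequence so that $\chi^h\to\chi$ in $L^1$.

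The main obstacle, and the genuinely new ingredient compared to \cite{LauOtt15}, is controlling the Lagrange multiplier $\mu_h := (2\lambda-1)/\sqrt{h}$ arising in \eqref{MM_lagrange}. Proposition \ref{L2 bounds lambda} is supposed to furnish $\|\mu_h\|_{L^2(0,T)}\lesssim 1$; my plan for that is to test the Euler--Lagrange equation \eqref{ELG} against a carefully chosen vector field $\xi$ that realises $\xi\cdot\nu\approx 1$ on the interface (a mollified inward-pointing field at time slice $t$) and to use that $\int (\nabla\cdot\xi)\chi^n\,dx$ is then bounded below by a positive multiple of the discrete perimeter $\sqrt{h}\,E_h(\chi^n)$, while $\delta E_h$ and $\delta D_h$ tested against such $\xi$ are controlled by $E_h(\chi^n)^{1/2}$ and $D_h(\chi^n-\chi^{n-1})^{1/2}$ respectively, thanks to the one-dimensional slicing estimate Lemma \ref{1d lemma} (the generalisation of Corollary~4.3 of \cite{LauOtt15} to threshold values $\lambda\neq\tfrac12$). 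Squaring and summing in $n$ and invoking \eqref{ED-estimate} should give the desired $L^2(0,T)$ bound; extracting a further subsequence yields $\mu_h\rightharpoonup \Lambda$ weakly in $L^2(0,T)$.

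With compactness and the Lagrange-multiplier bound in hand, I would pass to the limit in \eqref{ELG} term by term. The first variation $\delta E_h$ converges to $\frac1{\sqrt{\pi}}\int(\nabla\cdot\xi-\nu\cdot\nabla\xi\,\nu)|\nabla\chi|$ by the representation \eqref{compute dE 2} combined with the energy convergence assumption \eqref{conv_ass}, which upgrades the a priori weak convergence $(1-\chi^h)(G_h\,\mathrm{Id}-2h\nabla G_h)\ast\chi^h\rightharpoonup\tfrac1{\sqrt{\pi}}(\mathrm{Id}-\nu\otimes\nu)|\nabla\chi|$ to a useable strong form (Proposition \ref{deltaE}). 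For the dissipation, the product structure
\[
\delta D_h(\,\cdot\,-\chi^0)(\chi^1,\xi)=-2\int\frac{\chi^1-\chi^0}{h}\,\xi\cdot\sqrt{h}\nabla G_h\ast\chi^1\,dx+o(1)
\]
is handled in Proposition \ref{deltaD}: the first factor converges distributionally to $V|\nabla\chi|$ (using \eqref{ED-estimate} as an $L^2$-bound), and \eqref{conv_ass} is again exactly what is needed to upgrade $\sqrt{h}\nabla G_h\ast\chi^h$ to strong convergence to $\frac{1}{2\sqrt{\pi}}\nu|\nabla\chi|$ so that the product passes to the limit. Finally, the Lagrange term converges because $\int(\nabla\cdot\xi)\chi^h\,dx\to\int\xi\cdot\nu|\nabla\chi|$ strongly in $L^2(0,T)$ and $\mu_h\rightharpoonup\Lambda$ weakly.

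Putting these convergences together yields \eqref{H=v} with the function $V$ extracted from the discrete time differences. Equation \eqref{v=dtX} follows by testing with $\zeta\in C_c^\infty([0,T)\times\mathbb{R}^d)$, integrating by parts in time, and using \eqref{ED-estimate} to identify $(\chi^h(t+h)-\chi^h(t))/h$ weakly with $V|\nabla\chi|$ (essentially as in \cite{LauOtt15}). Volume preservation $|\{\chi(t)=1\}|=1$ passes to the limit by dominated convergence from $|\{\chi^h(t)=1\}|=1$. The identification $\Lambda=\langle H\rangle$ is then automatic: since $\chi$ preserves volume, $\int V|\nabla\chi|=0$, and testing \eqref{H=v} with a $\xi$ whose normal component is constantly $1$ on the interface (constructed by extending $\nu$) gives $\int H|\nabla\chi|=\Lambda\int|\nabla\chi|$, matching \eqref{average mean curvature}.
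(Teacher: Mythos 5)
Your high-level architecture (compactness, an $L^2(0,T)$ bound on the Lagrange multipliers $\mu_h=(2\lambda_h-1)/\sqrt h$, term-by-term passage to the limit in \eqref{ELG}, and the identification $\Lambda=\langle H\rangle$ via volume preservation) matches the paper. But your sketch of the key new ingredient, Proposition \ref{L2 bounds lambda}, has a genuine gap. You propose to choose $\xi$ with $\xi\cdot\nu\approx 1$ on the interface so that $\int(\nabla\cdot\xi)\chi^n\,dx$ is bounded below by the perimeter. This fails on two counts. First, the sets $\Omega^n$ are a priori only superlevel sets of convolutions with no regularity, so a vector field realising $\xi\cdot\nu\approx1$ on $\partial\Omega^n$ with $\|\xi\|_{W^{1,\infty}}$ bounded uniformly in $n$ and $h$ cannot be constructed; and in any case a lower bound by the perimeter is the wrong quantity (your stated bound $\sqrt h\,E_h(\chi^n)$ even degenerates as $h\to0$). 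The paper instead solves $\Delta u=\chi_\varepsilon$ (a mollification of $\chi^n$), sets $\xi=\nabla u$, so that $\nabla\cdot\xi=\chi_\varepsilon$ and $\int(\nabla\cdot\xi)\chi^n\,dx\approx\int\chi^n\,dx=1$ --- a lower bound by the \emph{constrained volume}, not the perimeter --- with $\|\xi\|_{W^{1,\infty}}\lesssim1+E_0$ via Calder\'on--Zygmund and Sobolev embedding. Second, invoking Lemma \ref{1d lemma} inside the proof of the multiplier bound is circular: that lemma requires $|\lambda-\tfrac12|<\tfrac18$ as a hypothesis and carries the term $\tfrac{r}{s^2}(\lambda-\tilde\lambda)^2/\sqrt h$ on its right-hand side, so it presupposes exactly the control you are trying to establish. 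In the paper the multiplier bound is proved first and independently (the $\delta D_h$ term is handled by Cauchy--Schwarz against the dissipation measure, not by slicing), and only then feeds into the selection of ``good'' mesoscopic time slices used in Proposition \ref{deltaD}.

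A second, related gap is your tightness argument. You assert that ``$\lambda$ is bounded away from $0$ uniformly in $h$'' and propose to iterate a Gaussian-decay bound over $T/h$ steps. Neither works: uniform control of $\lambda_h$ is not available a priori (it is a consequence of Proposition \ref{L2 bounds lambda}, and even then only in an $L^2$-averaged sense), and a per-step expansion of size $O(\sqrt h)$ iterated $T/h$ times diverges. The paper splits iterations into ``good'' ($|\lambda_n-\tfrac12|<\tfrac14$), where a half-space comparison gives $R_n\le R_{n-1}+C\sqrt h\,|\lambda_n-\tfrac12|$ and the sum is finite by Cauchy--Schwarz against the $L^2$ bound, and ``bad'' ones, whose number is controlled by Chebyshev and where an energy-rearrangement argument on the linearized energy gives $R_n\le 3R_{n-1}$. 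Finally, a minor point: for $\Lambda=\langle H\rangle$ you do not need to extend $\nu$ to a test field (again problematic for irregular interfaces); since $H$ is \emph{defined} through $H=V+\Lambda$ in $L^2(|\nabla\chi|\,dt)$, the identity $0=\frac{d}{dt}\int\chi\,dx=\int V|\nabla\chi|=\int H|\nabla\chi|-\Lambda\int|\nabla\chi|$ gives \eqref{average mean curvature} directly.
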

\begin{proof}[Proof of Theorem \ref{thm1}]
 By Proposition \ref{compactness} the approximate solutions $\chi^h$ converge to some limit $\chi$ after passage to a subsequence.
 The strategy of our proof for (\ref{H=v}) is to pass to the limit in the Euler-Lagrange equation (\ref{ELG}) after integration in time.
 
 By Proposition \ref{L2 bounds lambda}, after passing to a further subsequence, we can find a function $\Lambda \in L^2(0,T)$ such that 
\begin{align*}
 \frac{2\lambda_h-1}{\h} \rightharpoonup \frac{1}{\sqrt{\pi}}\Lambda\quad \text{in }L^2(0,T).
\end{align*}
Since the integrals converge strongly,
\begin{align*}
 \int\left(\nabla\cdot \xi\right) \chi^h \, dx
 \to \int \left(\nabla\cdot \xi \right) \chi \,dx \quad \text{in }L^2(0,T),
\end{align*}
we can pass to the limit $h\to 0$ in the product.
This is one of the three terms of the Euler-Lagrange equation.
In Proposition \ref{deltaE} we recover the mean curvature from the first variation of the energy,
i.\ e. the first term in (\ref{ELG}). In Proposition \ref{deltaD}
we recover the normal velocity from the second term in (\ref{ELG}), the first variation of the
dissipation.
Therefore, the limit solves (\ref{H=v}).
Furthermore, $V$ solves (\ref{v=dtX}) by construction.
Note that since $\Lambda,\,V\in L^2(\left|\nabla \chi \right| dt)$ we have a generalized mean curvature 
$H\in L^2(\left|\nabla \chi \right| dt)$.
We are left with proving (\ref{average mean curvature}).
Note that $t\mapsto \int \chi(t)\,dx \in H^1(0,T)$ with
\begin{align*}
 \frac d{dt}\int \chi\, dx = \int V \left| \nabla \chi \right|.
\end{align*}
Indeed, given $f\in C_0^\infty(0,T)$ and $g\in C_0^\infty(\R^d)$ with
$g \equiv 1$ on $B_{R^\ast}$ with $R^\ast= R^\ast(d,E_0,T)$ from Proposition \ref{tightness}, setting $\zeta(x,t) := f(t)g(x)$, 
we have
\begin{align*}
-\int_0^T  f'(t) \int \chi(t) \,dx \,dt
=  -\int_0^T \int \partial_t \zeta \, \chi \, dx\,dt
=  \int_0^T \int \zeta \,V \left| \nabla \chi \right| dt
=  \int_0^T f(t) \int V \left| \nabla \chi \right| dt.
\end{align*}
Since $\int \chi^h \,dx $ is constant in time,
also $\int \chi\, dx$ is constant in time.
Using (\ref{H=v}) as a pointwise a.\ e. statement in time, we have
\begin{align*}
 0 = \frac d{dt} \int \chi\, dx = \int V \left| \nabla \chi \right|
 \overset{(\ref{H=v})}{=} \int \left(H-\Lambda\right) \left| \nabla \chi \right|
 =   \int H \left| \nabla \chi \right|-\Lambda \int \left| \nabla \chi \right|
\end{align*}
almost everywhere in $(0,T)$. Solving for $\Lambda$ yields (\ref{average mean curvature}).
\end{proof}

\subsection{\texorpdfstring{$L^2$}{L2}-estimate for Lagrange multipliers}
%
%
The following proposition gives a quantitative estimate on the closeness of the threshold values $\lambda_n$ to $\tfrac12$ in the natural topology coming 
from the gradient flow structure and the appearance of $\tfrac{2\lambda_n-1}\h$ as a Lagrange multiplier. 
Roughly speaking, the lemma states that $\left|\lambda_h -\tfrac12\right| = \mathcal{O}(\h)$ in $L^2$.
This is the analogue of Corollary 3.4.4 in \cite{mugnai2015global} but our proof works in a different way.
While they couple the bound on the Lagrange multiplier and the growth rate of the sets via the estimate (3.28) in \cite{mugnai2015global},
we prove the bound on the Lagrange multipliers first, independently of the growth rate.
The main difference is that we construct our test function $\xi$ via some elliptic problem in Step 3 of the proof below so that we can obtain
estimates by using elliptic regularity theory, in particular the Calder\'{o}n-Zygmund inequality, cf. Theorem 9.9 in \cite{gilbarg2001}.	

\begin{prop}[\texorpdfstring{$L^2$}{L2}-estimate for Lagrange multipliers]\label{L2 bounds lambda}
 Given the approximate solutions $\chi^h$ obtained by Algorithm
\ref{MBO_volume} with threshold values $\lambda_h$, for $h\ll \frac{1}{E_0^2}$ we have
\begin{align*}
 \int_0^T \left( \lambda_h - \tfrac12\right)^2 dt \lesssim \left(1+T\right)\left( 1+E_0^4\right) h.
\end{align*}
Here $h\ll \frac1{E_0^2}$ means that there exists a generic constant $C=C(d)<\infty$ such that the statement holds for $h< \frac1{C E_0^2}$.
We recall that $A\lesssim  B$ means $A\leq C\,B$ for some generic constant $C=C(d)<\infty$.
\end{prop}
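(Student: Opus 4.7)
The plan is to extract an estimate on $(2\lambda_n-1)/\sqrt h$ at each time step from the Euler--Lagrange equation (\ref{ELG}) and then to sum these estimates over the $N:=T/h$ time steps, using the energy-dissipation estimate (\ref{ED-estimate}) to control the cumulative contribution of the dissipation term. The key is to pick, for each $n$, a test vector field $\xi_n$ such that the factor $\int(\nabla\cdot\xi_n)\chi^n\,dx$ appearing in (\ref{ELG}) is bounded below by a uniform positive constant while $\xi_n$ and its derivatives remain uniformly controlled. To achieve this I would solve an auxiliary elliptic problem: since by Proposition~\ref{tightness} all $\supp\chi^n$ lie in a fixed ball $B_{R^\ast}$, I fix a bump $g\in C_c^\infty(\R^d)$ with $\int g\,dx=1$ and $\supp g\cap B_{R^\ast}=\emptyset$, solve the Poisson equation $-\Delta u_n=\chi^n-g$ in $\R^d$, and set $\xi_n:=\eta\,\nabla u_n$ for a smooth cutoff $\eta$ equal to one on a large ball containing $B_{R^\ast}\cup\supp g$. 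Then $\int(\nabla\cdot\xi_n)\chi^n\,dx=-|\Omega^n|=-1$, and by the Calder\'on--Zygmund inequality $\|\nabla^2 u_n\|_{L^p}\lesssim\|\chi^n-g\|_{L^p}\lesssim 1$ for any $p\in(1,\infty)$, which via Sobolev embedding together with standard potential estimates yields $\|\xi_n\|_{L^\infty}+\|\nabla\xi_n\|_{L^\infty}\lesssim 1$ uniformly in $n$ and $h$.

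With $\xi_n$ in hand the two variations in (\ref{ELG}) must be bounded. For $\delta E_h$ I would use the explicit representation (\ref{compute dE 2}) together with the standard bound $\|(1-\chi^n)(G_h\,Id-2h\nabla G_h)\ast\chi^n\|_{L^1}\lesssim\sqrt h\,E_h(\chi^n)$, giving $|\delta E_h(\chi^n,\xi_n)|\lesssim\|\nabla\xi_n\|_{L^\infty}\,E_h(\chi^n)\lesssim E_0$. For $\delta D_h$ the starting point is
\[
\delta D_h(\,\cdot\,-\chi^{n-1})(\chi^n,\xi_n)=-\frac{2}{\sqrt h}\int\xi_n\cdot\nabla\chi^n\,G_h\ast(\chi^n-\chi^{n-1})\,dx,
\]
after which I would rewrite $\nabla\chi^n$ as a surface measure on $\partial\Omega^n$, apply Cauchy--Schwarz, and use a trace-type control of $G_h\ast(\chi^n-\chi^{n-1})$ on $\partial\Omega^n=\{\phi_n=\lambda_n\}$ to reach
\[
|\delta D_h(\,\cdot\,-\chi^{n-1})(\chi^n,\xi_n)|^2\lesssim\frac{\|\xi_n\|_{L^\infty}^2\,E_0}{\sqrt h}\,D_h(\chi^n-\chi^{n-1}),
\]
which gains the decisive factor $\sqrt h\,D_h(\chi^n-\chi^{n-1})$ over the naive pointwise bound and is the step where Lemma~\ref{1d lemma} (generalising Lemma~4.2/Corollary~4.3 of \cite{LauOtt15} to $\lambda_n\neq\tfrac12$) is essential.

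Combining the two bounds with $|\int(\nabla\cdot\xi_n)\chi^n\,dx|=1$ in (\ref{ELG}) yields $\big(\tfrac{2\lambda_n-1}{\sqrt h}\big)^2\lesssim|\delta E_h|^2+|\delta D_h|^2$. Multiplying by $h$ and summing over $n$ from $1$ to $N=T/h$, the $\delta E_h$-contribution gives $h\sum_n|\delta E_h|^2\lesssim T\,E_0^2$ and, by (\ref{ED-estimate}), $h\sum_n|\delta D_h|^2\lesssim\sqrt h\,E_0\sum_n D_h(\chi^n-\chi^{n-1})\lesssim\sqrt h\,E_0^2$. Since $(\lambda_n-\tfrac12)^2=\tfrac h4\big(\tfrac{2\lambda_n-1}{\sqrt h}\big)^2$, this translates into $\int_0^T(\lambda_h-\tfrac12)^2\,dt\lesssim(1+T)(1+E_0^4)\,h$ as required. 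The main obstacle is the sharp bound on $|\delta D_h|^2$: the naive $|\delta D_h|\lesssim\|\xi_n\|_{L^\infty}E_0/\sqrt h$ is far from summable in $n$, and obtaining the refined version above---via the trace-type analysis on $\{\phi_n=\lambda_n\}$ and Lemma~\ref{1d lemma}---is where the bulk of the technical work has to be done.
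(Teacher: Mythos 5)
Your overall architecture is the right one and matches the paper's: square the Euler--Lagrange equation \eqref{ELG}, test with a vector field whose divergence pairs with $\chi^n$ to give a quantity bounded below, control $\delta E_h$ by the energy and $\delta D_h$ by the dissipation, and sum using \eqref{ED-estimate}. However, there are three genuine gaps. First, your construction of $\xi_n$ invokes Proposition~\ref{tightness} to confine all $\supp\chi^n$ to a fixed ball, but the proof of Proposition~\ref{tightness} itself relies on Proposition~\ref{L2 bounds lambda} (both to count the \lq bad\rq\ iterations via Chebyshev and to sum $\sqrt h|\lambda_n-\tfrac12|$ via Jensen), so your argument is circular as stated. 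The paper deliberately proves the Lagrange-multiplier bound \emph{first}, with a construction of $\xi$ whose estimates are independent of the support radius (the cutoff radius $R$ is sent to infinity at the end of Step~3). Second, the claim $\|\nabla\xi_n\|_{L^\infty}\lesssim1$ for $\xi_n=\eta\,\nabla u_n$ with $\Delta u_n=-(\chi^n-g)$ is false: Calder\'on--Zygmund gives $\nabla^2u_n\in L^p$ for $p<\infty$ but not $L^\infty$ (the right-hand side is only $L^\infty$, and $\nabla(\chi^n-g)$ is a measure, not in any $L^p$), so Sobolev embedding cannot upgrade to $W^{1,\infty}$. This is exactly why the paper mollifies $\chi$ at scale $\varepsilon\sim1/E_0$ before solving the Poisson problem; the price is $\|\xi\|_{W^{1,\infty}}\lesssim1+E_0$, and this factor is the source of the $E_0^4$ in the final bound.

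Third, the decisive estimate on $\delta D_h$ is asserted rather than derived, and the mechanism you point to is not the right one: Lemma~\ref{1d lemma} plays no role in this proposition (it is used later, in Proposition~\ref{deltaD}, to recover the normal velocity), and no trace analysis on the level set $\{\phi_n=\lambda_n\}$ is needed. The paper's Step~2 is an elementary Cauchy--Schwarz argument against the dissipation measure $\mu_n=\frac1{\sqrt h}\int[G_{h/2}\ast(\chi^n-\chi^{n-1})]^2dx=D_h(\chi^n-\chi^{n-1})$, combined with commutator estimates for $\h\nabla G_{h/2}\ast(\chi^n\xi)$ versus $\xi\,\h\nabla G_{h/2}\ast\chi^n$; it yields $[\delta D_h]^2\lesssim\frac1h\bigl(\|\xi\|_\infty^2E_0+\|\xi\|_{W^{1,\infty}}^2+\h\|\nabla\xi\|_\infty^2\bigr)\mu_n$, which after multiplying by $h$ and summing is controlled by \eqref{ED-estimate}. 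Your claimed sharper bound with prefactor $E_0/\sqrt h$ is not needed and is not substantiated; without a proof of some bound of this type the argument does not close, since, as you correctly note, the naive pointwise bound $|\delta D_h|\lesssim E_0/\sqrt h$ is not summable.
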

\begin{proof}
Squaring the Euler-Lagrange equation (\ref{ELG}), we obtain
\begin{align}\label{ELGstepn}
 \frac1h\left( \lambda_n - \tfrac12\right)^2
\left( \int\left(\nabla\cdot \xi\right) \chi^n\,dx\right)^2
\lesssim \left[\delta E_h (\chi^n,\xi)\right]^2
+\left[\delta D_h (\,\cdot\, - \chi^{n-1})(\chi^n,\xi)\right]^2
\end{align}
for any $\xi\in C_0^\infty(\R^d,\R^d)$.
In order to prove the proposition, we first estimate the right-hand side for an arbitrary test vector field $\xi$,
cf. Step 1 for the first and Step 2 for the second term.
In Step 3 we construct a specific vector field such that the integral on the left-hand side is bounded from below.
\step{Step 1: Estimates on $\delta E_h (\chi,\xi)$.}
For any $\chi\in\{0,1\}$ and any $\xi\in C_0^\infty(\R^d,\R^d)$, we have
\begin{align}\label{estimate_dE}
 \left|\delta E_h (\chi,\xi)\right|
\lesssim \left\| \nabla \xi\right\|_\infty E_h(\chi).
\end{align}
Argument:
Starting from the computation \eqref{compute dE} we see that the second integral on the right-hand side is clearly controlled by
$
 \|\nabla\xi\|_\infty E_h(\chi),
$
whereas the first integral on the right-hand side can be estimated via
\begin{align*}
 &\frac1\h \int \xi \cdot \left[ \left(1-\chi\right) \nabla G_h \ast \chi \right]
- \left(1-\chi\right) \nabla G_h \ast \left(\xi \chi\right) dx\\
&= \frac1\h\int -\frac{ z}{2h} G_h(z) \cdot \int  \left( \xi(x) - \xi(x-z) \right)
\left( 1-\chi \right)(x) \chi(x-z) \,dx\,dz\\
&\leq \left\| \nabla \xi\right\|_\infty \frac1\h \int  \frac{ |z|^2}{2h} G_h(z)
 \int  \left( 1-\chi \right)(x) \chi(x-z) \,dx\,dz.
\end{align*}
Using $|z|^2G_1(z) \lesssim G_{2}(z)$ we thus have
\begin{align*}
 \left|\delta E_h (\chi,\xi) \right|
\lesssim \left\| \nabla \xi\right\|_\infty \left(E_{2h}(\chi) +  E_{h}(\chi)\right)
\end{align*}
and the approximate monotonicity of the energy (\ref{appr_mon}) yields (\ref{estimate_dE}).
\step{Step 2: Estimates on $\delta D_h (\,\cdot\, - \chi^{n-1})(\chi^n,\xi)$.}
We have
\begin{align}\label{estimate_dD}
  h\sum_{n=1}^N\left[\delta D_h (\,\cdot\, - \chi^{n-1})(\chi^n,\xi_n) \right]^2
\lesssim \sup_n \left\| \xi_n\right\|_{W^{1,\infty}}^2\left( 1+ E_0^2\right).
\end{align}
Argument: For any $\xi\in C_0^\infty(\R^d,\R^d)$ and any $n\in \{1,\dots,N\}$, we have
\begin{align*}
 \delta D_h (\,\cdot\, - \chi^{n-1})(\chi^n,\xi) 
=& \frac2\h \int \left( -\xi \cdot \nabla \chi^n\right)G_h\ast \left(\chi^n-\chi^{n-1}\right) dx\\
= &\frac2\h \int \chi^n \xi  \cdot \nabla G_h\ast \left(\chi^n-\chi^{n-1}\right)
+ \left( \nabla\cdot \xi\right)\chi^n G_h\ast \left(\chi^n-\chi^{n-1}\right) dx.
\end{align*}
Setting (compare to the \emph{dissipation measures} $\mu_h$ in Definition 2.7 in \cite{LauOtt15})
\begin{align*}
 \mu_n := \frac1\h \int \left[G_{h/2}\ast\left(\chi^n-\chi^{n-1} \right) \right]^2 dx
\end{align*}
and using the Cauchy-Schwarz inequality, we obtain
\begin{align*}
 \left[\delta D_h (\,\cdot\, - \chi^{n-1})(\chi^n,\xi) \right]^2
&\lesssim 
\left(\frac1h \int \h \nabla G_{h/2}\ast\left(\chi^n \xi\right)
  G_{h/2}\ast \left(\chi^n-\chi^{n-1}\right) dx\right)^2\\
&+ \left\| \nabla \xi\right\|_\infty ^2
\left(\frac1\h \int G_{h/2}\ast\chi^n\left|G_{h/2}\ast \left( \chi^n-\chi^{n-1}\right) \right|dx\right)^2\\
&\leq 
\frac1h\left(\frac1\h \int \left[\h \nabla G_{h/2}\ast\left(\chi^n \xi\right)\right]^2 \!dx\right)
  \mu_n
+ \frac1{\h} \left\| \nabla \xi\right\|_\infty^2
\int \chi^n\,dx\;\mu_n.
\end{align*}
For the first right-hand side term, we first observe that for any $\chi\in \{0,1\}$ and any $\xi\in C_0^\infty(\R^d,\R^d)$,
by $|\xi(x+z)-\xi(x)| \leq \|\nabla \xi\|_\infty |z|$ we obtain
\begin{align*}
 &\frac1\h\int |\h \nabla  G_{h/2}(z)| \int \big|\xi(x+z)-\xi(x)\big| \chi(x+z)
\left| \h \nabla G_{h/2}\ast \left( \chi \xi \right)\right|(x) \,dx\,dz\\
&\leq \left\| \xi\right\|_\infty\left\| \nabla \xi\right\|_\infty \int \chi\,dx \left(\int |z||\nabla G_{h/2}(z)|\,dz\right) \left( \int |\h \nabla  G_{h/2}(z)|\,dz\right),
\end{align*}
where the last two integrals are uniformly bounded in $h$.
Thus, in our case where $\chi=\chi^n$ with $\int \chi^n\,dx =1 $, we obtain an estimate on the error when commuting the multiplication with $\xi$ and the convolution with
the kernel $\h \nabla G_{h/2}$ in one of the factors:
\begin{align*}
 \frac1\h \int \left[\h \nabla G_{h/2}\ast\left(\chi \xi\right)\right]^2 \!dx
\leq \frac1\h \int \xi \!\cdot \h \nabla G_{h/2}\ast\chi
 \left[\h \nabla G_{h/2}\ast\left(\chi \xi\right)\right] \!dx 
+ c(d) \left\| \xi\right\|_{W^{1,\infty}}^2.
\end{align*}
Since $\nabla G$ is antisymmetric and since $\left|z\right|G(z) \lesssim G_2(z)$, we have
\begin{align*}
 &\frac1\h \int \xi \cdot \h \nabla G_{h/2}\ast\chi
 \left[\h \nabla G_{h/2}\ast\left(\chi \xi\right)\right] \!dx\\
&\qquad \qquad \qquad \qquad = \frac1\h \int \xi \cdot \h \nabla G_{h/2}\ast\left(\chi-1\right)
 \left[\h \nabla G_{h/2}\ast\left(\chi \xi\right)\right] \!dx \\
&\qquad \qquad \qquad \qquad \lesssim \left\| \xi\right\|_\infty^2 \frac1\h \int G_{h}\ast \left(1-\chi\right) 
G_{h}\ast\chi \,dx
\lesssim \left\| \xi\right\|_\infty^2 E_h(\chi).
\end{align*}
Thus, we have
\begin{align*}
  \left[\delta D_h (\,\cdot\, - \chi^{n-1})(\chi^n,\xi) \right]^2
\lesssim& \frac1h\left( 
\left\| \xi\right\|_\infty^2 E_0 + \left\| \xi\right\|_{W^{1,\infty}}^2 +
 \h \left\| \nabla \xi\right\|_\infty^2\right) \mu_n,
\end{align*}
which is (\ref{estimate_dD}) after integration in time and using the energy-dissipation
estimate (\ref{ED-estimate}) once more.
\step{Step 3: Choice of $\xi$.}
For any $E_0>0$, any $0< h \ll 1/E_0^2$ 
and any $\chi\in \{0,1\}$ with $\int \chi\,dx =1$, $\supp \chi \subset \subset \R^d$
and $E_h(\chi)\leq E_0$ there exists $\xi\in C_0^\infty(\R^d,\R^d)$ with
\begin{align}
 \int \left(\nabla\cdot \xi\right) \chi \,dx &\geq \frac 12\quad \text{and} \label{divxi=chie}\\
\left\|  \xi\right\|_{W^{1,\infty}}		&\lesssim 1+ E_0.
\label{boundDxi}
\end{align}
Argument:
Set $\varepsilon^2=\frac1{ C E_0^2}$. We will determine the constant $C=C(d)$ later. 
Set $\chi_\varepsilon :=  \varphi_\varepsilon\ast \chi$ for some standard
mollifier $\varphi_\varepsilon(z)=\frac1{\varepsilon^d}\varphi_1(\frac z\epsilon)$ with $0\leq \varphi_1\leq 1$, $\int \varphi_1 \,dz =1$, $\varphi_1 \lesssim G_1$ and $\int |\nabla \varphi_1| \,dz \lesssim 1.$ 
Then $\chi_\varepsilon\in C_0^\infty(\R^d,[0,1])$.
Let $u$ denote the solution of 
\begin{align*}
 \Delta u = & \chi_\varepsilon
\end{align*}
given by the Newtonian potential
$
 u = \Gamma \ast \chi_\varepsilon.
$
We define $ \xi:=  \nabla u = \Gamma \ast \nabla \chi_\varepsilon$
and claim that $\xi$ satisfies (\ref{divxi=chie}). 
Indeed, since 
$\left| \chi_\varepsilon - \chi \right| = \chi\left( 1-\chi_\varepsilon \right) + \left( 1-\chi \right)\chi_\varepsilon$ for $\chi\in\{0,1\}$ and $0\leq \chi_\varepsilon \leq 1$,
we can use the approximate monotonicity (\ref{appr_mon}) such that for any $0<h\leq \varepsilon^2$ we have
\begin{align*}
 \int\!\left| \chi_\varepsilon - \chi \right| dx 
&= 2 \int \!\left( 1-\chi\right) \varphi_\varepsilon \ast \chi \,dx
\lesssim \int  \!\left( 1-\chi\right) G_{\varepsilon^2} \ast \chi \,dx \!
\overset{(\ref{appr_mon})}{\leq} \!
 \varepsilon \left(\frac{\varepsilon + \h }{\varepsilon} \right)^{d+1}\!\!E_h(\chi)
\lesssim \varepsilon E_0.
\end{align*}
Thus, if we pick the constant $C(d)$ in the definition of $\varepsilon$ large enough, we have
\begin{align*}
 \int \left(\nabla\cdot\xi\right)\chi\,dx =  \int\chi_\varepsilon\chi\,dx
\geq \int \chi \,dx -  \int\left| \chi_\varepsilon - \chi \right| dx 
\geq \frac 12,
\end{align*}
which is (\ref{divxi=chie}).
Now we give an argument for (\ref{boundDxi}).
The Calder\'{o}n-Zygmund inequality yields
\begin{align}\label{CZforDxi}
\int_{\R^d} | \nabla \xi |^p dx 
 \lesssim_p  \int |\chi_\varepsilon |^p dx \leq 1
\end{align}
for any $1<p<\infty$, where we write $\lesssim_p $ to stress that the constant depends not only on the dimension $d$ but also on the parameter $p$.
Since $\chi_\varepsilon$ is smooth, we can differentiate the equation:
\begin{align*}
 \Delta \xi = \nabla \chi_\varepsilon.
\end{align*}
Thus by the Calder\'{o}n-Zygmund inequality and Jensen's inequality
\begin{align}\label{CZforD2xi}
\int_{\R^d} | \nabla^2 \xi |^p dx 
\lesssim_p \int |\nabla \chi_\varepsilon |^p dx \leq\left(\int \left| \nabla \varphi_\varepsilon\right| dz \right)^p \int \left|\chi\right|^pdx \lesssim\frac1{\varepsilon^p}
\end{align}
for any $1<p<\infty$.
Now we want to bound the $0$-th order term of $\xi$.
Let $R>0$ be big enough such that $\supp \chi_\varepsilon \subset B_{\frac R2}$ 
and take $\eta \in C^{\infty}_c(B_{2R})$ to be a cut-off function for $B_{R}$ in $B_{2R}$ with $|\nabla\eta|\lesssim \frac1R$.
Then we have
\begin{align*}
\int |\nabla (\eta\,\xi)|^pdx 
&\lesssim_p  \int \eta | \nabla  \xi|^pdx 
+\int |\nabla \eta |^p | \xi|^pdx 
\overset{(\ref{CZforDxi})}{\lesssim}\!\!\!_p   \;1
+ \frac1{R^p} \int_{B_{2R}\setminus B_{R}}  | \xi|^pdx.
\end{align*}
Note that for any $x\in \R^d\setminus B_{R}$, since then $\dist(x,\supp \chi_\varepsilon) \gtrsim R$, we have
\begin{align*}
 | \xi(x)|\leq \int \left| \nabla \Gamma(x-y)\right| \chi_\varepsilon(y) \, dy
\lesssim \frac1{R^{d-1}} \int \chi_\varepsilon(y)\,dy = \frac1{R^{d-1}} .
\end{align*}
Thus,
\begin{align}\label{Detaxi}
\int |\nabla(\eta\, \xi)|^p dx \lesssim_p 1+  R^{d(1-p)}.
\end{align}
Now we fix some $p = p(d) \in (\frac d 2,d)$.
Since $\eta\,\xi$ has compact support, we can apply the Gagliardo-Nirenberg-Sobolev inequality, so that
\begin{align*}
\int_{B_R} |\xi|^{p^\ast}dx
\leq\int |\eta \,\xi|^{p^\ast}dx
\lesssim \left(\int |\nabla(\eta\,\xi)|^{p}dx\right)^{p^\ast/p}
\overset{(\ref{Detaxi})}{\lesssim} \left( 1+  R^{d(1-p)}\right)^{d/(d-p)},
\end{align*}
where $p^\ast = \frac{pd}{d-p}>d$ is the Sobolev conjugate of $p$.
Taking the limit $R\to \infty$, we obtain
\begin{align}\label{Lpforxi}
\int |\xi|^{p^\ast}dx \lesssim 1.
\end{align}
Since $p^\ast>d$, by Morrey's inequality and the above estimates (\ref{CZforDxi}), (\ref{CZforD2xi}) with $p^\ast$ playing the role of $p$ and (\ref{Lpforxi}), we have
\begin{align*}
\| \xi\|_{W^{1,\infty}(\R^d)}
\lesssim \|  \xi\|_{W^{2,p^\ast}(\R^d)} \lesssim 1+\frac1{\varepsilon} \sim 1+E_0.
\end{align*}
\step{Step 4: Conclusion.} We apply Step 3 on $\chi=\chi^n$ and find $\xi^n\in C_0^\infty(\R^d,\R^d)$
with
\begin{align*}
 \int \left(\nabla\cdot \xi^n\right) \chi^n \,dx &\geq \frac 12\\
\left\|  \xi^n\right\|_{W^{1,\infty}}		&\lesssim 1 +  E_0.
\end{align*}
Plugging $\xi=\xi^n$ into (\ref{ELGstepn}), summing over $n$ and using the estimates in Steps 1 and 2, we obtain
\begin{align*}
 \sum_{n=1}^N \left( \lambda_n-\tfrac12\right)^2
\lesssim &\sup_n\left\| \xi^n\right\|_{W^{1,\infty}}^2
\left(T E_0^2 + 1+  E_0^2\right)
\lesssim  (1+T)(1+E_0^4),
\end{align*}
which is the desired estimate.
\end{proof}
\subsection{Compactness}
%
%
\begin{prop}[Compactness]\label{compactness}
 There exists a subsequence $h\searrow 0$ and a function 
$\chi\in L^1((0,T)\times\R^d,\{0,1\})$ such that
\begin{align}\label{comp_conv_1}
 \chi^h\longrightarrow \chi \quad \text{in } L^1( (0,T)\times\R^d).
\end{align}
Moreover,
\begin{align}\label{comp_conv_2}
 \chi^h\longrightarrow \chi \quad \text{a.\ e. in } (0,T)\times \R^d
\end{align}
and $\chi(t)\in BV(\R^d,\{0,1\})$, $\int\chi(t)\,dx =1 $ for a.\ e. $t\in(0,T)$.
\end{prop}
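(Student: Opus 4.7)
The plan is to deduce compactness by combining a uniform spatial BV bound with a time-equicontinuity estimate, both furnished by the energy-dissipation estimate \eqref{ED-estimate}, and then to invoke a Fr\'{e}chet-Kolmogorov / Aubin-Lions type argument on a family that is tight thanks to Proposition \ref{tightness}.

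First, I would establish a uniform spatial BV bound. The energy-dissipation estimate \eqref{ED-estimate} yields $E_h(\chi^h(t)) \leq E_0$ for every $t \in (0,T)$ and every $h$. Fixing any $h_0 > 0$ and applying the approximate monotonicity \eqref{appr_mon}, one obtains $E_{h_0}(\chi^h(t)) \lesssim E_0$ for all $h \leq h_0$. Sending $h \to 0$ along a subsequence and then $h_0 \to 0$, using \eqref{EhtoE} and lower semicontinuity of the perimeter, gives a uniform bound $\sup_t \int |\nabla \chi^h(t)| \lesssim E_0$. Combined with Proposition \ref{tightness}, which confines $\supp \chi^h(t)$ to a fixed ball $B_{R^*}$ with $R^* = R^*(d, E_0, T)$, this already provides precompactness in $L^1(\mathbb{R}^d)$ for each fixed time slice.

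Second, I would convert the dissipation sum $\sum_{n=1}^N D_h(\chi^n - \chi^{n-1}) \leq E_0$ into a uniform-in-$h$ $L^1$-equicontinuity in time, i.e.\ an estimate of the form
\begin{equation*}
\int_0^{T-\tau} \|\chi^h(t+\tau) - \chi^h(t)\|_{L^1(\mathbb{R}^d)} \, dt \leq C \, \omega(\tau),
\end{equation*}
with $\omega(\tau) \to 0$ as $\tau \to 0^+$. The main sub-step is to control a single finite difference $\|\chi^n - \chi^{n-1}\|_{L^1}$ by $D_h(\chi^n - \chi^{n-1})^{1/2}$ times a factor involving the BV norm already bounded in Step 1. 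This is the analogue of the estimates carried out for the dissipation measures $\mu_h$ in \cite{LauOtt15} and exploits the convolution identity $D_h(\omega) = \frac{1}{\sqrt{h}} \|G_{h/2} \ast \omega\|_{L^2}^2$ together with $\omega = \chi^n - \chi^{n-1} \in \{-1,0,1\}$ and a Cauchy-Schwarz argument in duality with BV test fields.

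Third, Fr\'{e}chet-Kolmogorov applied on $(0,T) \times B_{R^*}$ combines the spatial BV bound with the time-equicontinuity to yield strong compactness of $\chi^h$ in $L^1((0,T) \times \mathbb{R}^d)$, which proves \eqref{comp_conv_1}; extracting a further subsequence gives the pointwise a.e.\ statement \eqref{comp_conv_2}. Since each $\chi^h$ takes values in $\{0,1\}$, the limit $\chi$ does too; lower semicontinuity of the perimeter under $L^1$-convergence together with Fatou and the uniform BV bound gives $\chi(t) \in BV(\mathbb{R}^d)$ for a.e.\ $t$; and the volume constraint $\int \chi^h(t) \, dx = 1$, valid by construction of Algorithm \ref{MBO_volume}, passes to the limit using the uniform support bound. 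The main obstacle is the second step: the dissipation $D_h$ morally measures an $H^{-1/2}$-type norm of the finite difference, and extracting a genuine $L^1$-equicontinuity out of it requires the careful interplay between the heat-kernel convolution structure of $D_h$ and the BV-in-space estimate from Step 1.
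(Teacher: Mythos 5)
Your overall route --- $L^1$ space--time equicontinuity from the energy-dissipation estimate, then a Fr\'echet--Kolmogorov argument on the bounded region supplied by Proposition \ref{tightness} --- is exactly the paper's (which establishes the combined estimate \eqref{compactness dtchi} and then defers to Lemmas 2.4, 2.5 and Proposition 2.1 of \cite{LauOtt15}). However, your Step 1 contains a genuine error. From \eqref{ED-estimate} and \eqref{appr_mon} you get $E_{h_0}(\chi^h(t))\lesssim E_0$ only for $h_0\geq h$; the approximate monotonicity goes the wrong way to let you send $h_0\to 0$ at fixed $h$, and ``sending $h\to0$ along a subsequence and then $h_0\to0$'' can only produce a bound on the limit $\chi(t)$, not on $\chi^h(t)$. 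Indeed the thresholded sets $\Omega^h(t)$ need not have uniformly bounded perimeter: $E_h$ only detects oscillations of the boundary at scales $\gtrsim\sqrt{h}$, so $\int|\nabla\chi^h(t)|$ may blow up as $h\to0$ even though $E_h(\chi^h(t))\leq E_0$. The correct and sufficient substitute is the approximate spatial equicontinuity $\int|\chi^h(x+\delta e,t)-\chi^h(x,t)|\,dx\lesssim E_0\left(\delta+\sqrt{h}\right)$ (Lemma 2.4 in \cite{LauOtt15}), whose additive $\sqrt{h}$ is harmless for Fr\'echet--Kolmogorov because it vanishes along the sequence.

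This error propagates into Steps 2 and 3, where you invoke ``the BV norm already bounded in Step 1.'' The repair is standard: in Step 2 the duality argument must be run against $E_h(\chi^h(t))$ itself rather than a BV norm, e.g.\ by splitting $\int|\chi^n-\chi^{n-1}|\leq \int|(\chi^n-\chi^{n-1})-G_h\ast(\chi^n-\chi^{n-1})|+\int|G_h\ast(\chi^n-\chi^{n-1})|$, controlling the first term by the scale-$\sqrt{h}$ spatial modulus (hence by $E_0\sqrt{h}$) and the second by Cauchy--Schwarz against the dissipation $D_h$ together with the unit volume and the tightness radius $R^\ast$; this is Lemma 2.5 of \cite{LauOtt15} and yields \eqref{compactness dtchi}. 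With that modification your Step 3 and the concluding observations --- values in $\{0,1\}$, lower semicontinuity giving $\chi(t)\in BV$ for the \emph{limit} only, and conservation of $\int\chi^h\,dx=1$ passing to the limit via tightness --- are fine and coincide with the paper's argument.
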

\begin{proof}
As in Lemmas 2.4 and 2.5 in \cite{LauOtt15} we can prove that
\begin{align}\label{compactness dtchi}
 \int_0^T \int \left| \chi^h(x+\delta \,e, t+\tau) - \chi^h(x,t)\right|dx\,dt \lesssim \left(1+T\right)E_0 \left(\delta+\tau +\h\right).
\end{align}
The proposition follows then from the arguments in Proposition 2.1 of \cite{LauOtt15}
in conjunction with Proposition \ref{tightness} below.
Indeed, in \cite{LauOtt15}, the authors show that this can be done by adapting the proof of the Riesz-Kolmogorov compactness theorem.
Since we work in $\R^d$ and not on a periodic domain as in \cite{LauOtt15} we need to guarantee that no mass escapes to infinity.
The proposition below establishes precisely this.
\end{proof}

Take $R_0 > 0$ such that $\Omega^0 \subset B_{R_0}$.
For subsequent $n$ we take a sequence of radii $R_n \geq R_{n-1}$ such that 
$\Omega^n \subset B_{R_n}$.
The focus of this section will be to show that we can choose the radii $R_n$ such that they are uniformly bounded for 
$n\in \{1,\dots,N\}$, independent of the time step $h$.
\begin{prop}[Tightness]\label{tightness}
 There is a finite radius $R^\ast=R^\ast(d,E_0,T)$, independent of $h$ such that 
\begin{align*}
 \Omega^h(t) \subset B_{R^*} \quad \text{for all } t\in[0,T].
\end{align*}
\end{prop}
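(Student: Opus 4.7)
My plan is to combine a half-space comparison property of the thresholding step with the $L^2$ bound on $\lambda_h - \tfrac12$ from Proposition~\ref{L2 bounds lambda} to control how far $\Omega^n$ can stretch beyond $\Omega^0$ in any given direction.

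First I would fix a unit vector $e\in\sphere$ and a half-space $H = \{y\colon y\cdot e \leq a\}$ containing $\Omega^{n-1}$. Since $\chi^{n-1}\leq \chara_H$ pointwise and the heat kernel is non-negative, $\phi = G_h\ast\chi^{n-1} \leq G_h\ast\chara_H$, and the latter is a direct one-dimensional Gaussian computation equal to $\Phi\bigl((a-x\cdot e)/\sqrt{2h}\bigr)$, where $\Phi$ denotes the standard normal c.d.f. Hence if $x\cdot e = a+r$ and $x\in\Omega^n = \{\phi>\lambda_n\}$, then $\Phi(-r/\sqrt{2h})>\lambda_n$, forcing $r < \sqrt{2h}\,\Phi^{-1}(1-\lambda_n)$; in particular $r\leq 0$ whenever $\lambda_n\geq \tfrac12$. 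Applying this in every direction $e$ and choosing $a = \sup_{y\in\Omega^{n-1}} y\cdot e$ (the smallest half-space bound), then iterating starting from $\Omega^0\subset B_{R_0}$, yields
\begin{align*}
\Omega^n \subset B_{R_0+S_n}(0), \qquad S_n := \sqrt{2h}\sum_{k=1}^n \bigl(\Phi^{-1}(1-\lambda_k)\bigr)_+.
\end{align*}

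The next step is to bound $S_N$ uniformly in $h$. Near $\lambda=\tfrac12$ the Taylor expansion $\Phi^{-1}(1-\lambda)\sim \sqrt{2\pi}(\tfrac12-\lambda)$ gives $\bigl(\Phi^{-1}(1-\lambda_k)\bigr)_+\lesssim |\lambda_k-\tfrac12|$ as long as $\lambda_k$ stays inside a fixed compact subset of $(0,1)$. Combining Cauchy--Schwarz with $Nh\leq T$ and Proposition~\ref{L2 bounds lambda} then yields
\begin{align*}
S_N \lesssim \sqrt{h}\sum_{k=1}^N \left|\lambda_k-\tfrac12\right| \leq \sqrt{hN}\left(\sum_{k=1}^N\bigl(\lambda_k-\tfrac12\bigr)^2\right)^{1/2} \lesssim \sqrt{T(1+T)(1+E_0^4)},
\end{align*}
which is independent of $h$ and delivers the desired radius $R^*$.

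The main obstacle is that the linearization $\Phi^{-1}(1-\lambda)\lesssim |\lambda-\tfrac12|$ degenerates as $\lambda\downarrow 0$, so the Cauchy--Schwarz step requires a uniform pointwise two-sided bound $\lambda_n\in[\lambda_{\min}(E_0),1-\lambda_{\min}(E_0)]$ for all $h$ sufficiently small. I expect this to be the most delicate step. A natural route is to combine the identities $\int(1-\chi^{n-1})\phi\,dx = \int \chi^{n-1}(1-\phi)\,dx = \sqrt{h}\,E_h(\chi^{n-1})\leq \sqrt{h}\,E_0$ with Markov's inequality to obtain $\bigl|\,|\{\phi>\tfrac12\}|-1\,\bigr|\lesssim \sqrt{h}\,E_0$, and then exploit the smoothness of $\phi=G_h\ast\chi^{n-1}$ together with the volume identity $|\{\phi>\lambda_n\}|=1$ to force $\lambda_n$ into the required compact sub-interval of $(0,1)$ for $h$ small enough, thereby closing the estimate.
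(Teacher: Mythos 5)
Your half-space comparison for the ``good'' iterations and the concluding Cauchy--Schwarz step are exactly the paper's Lemma \ref{lem_good_iteration} and the final summation in its proof of Proposition \ref{tightness}: the computation $G_h\ast\chara_H = \tfrac12-\int_0^{x\cdot e-a}G_h^1$, the resulting bound $R_n\le R_{n-1}+C\sqrt{h}\,|\lambda_n-\tfrac12|$, and the estimate $\sqrt{h}\sum_n|\lambda_n-\tfrac12|\le\sqrt{hN}\bigl(\sum_n(\lambda_n-\tfrac12)^2\bigr)^{1/2}\lesssim C(d,E_0,T)$ via Proposition \ref{L2 bounds lambda} all match. The gap is in the step you yourself flag as delicate, and it is not a technicality: your argument needs a \emph{uniform pointwise} bound $\lambda_n\in[\lambda_{\min}(d,E_0),1)$ for every $n$, because $\bigl(\Phi^{-1}(1-\lambda)\bigr)_+$ blows up as $\lambda\downarrow0$ and the $L^2$ estimate of Proposition \ref{L2 bounds lambda} only controls $\sum_n(\lambda_n-\tfrac12)^2$, which is compatible with finitely many $\lambda_n$ being arbitrarily close to $0$. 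The route you sketch does not close this. Markov's inequality applied to $\int(1-\chi^{n-1})\phi\,dx\le\sqrt{h}E_0$ indeed gives $\bigl|\,|\{\phi>\tfrac12\}|-1\bigr|\lesssim\sqrt{h}E_0$, but the only uniform smoothness available is $\|\nabla\phi\|_\infty\lesssim h^{-1/2}$; combining these via the coarea formula and the isoperimetric inequality yields
\begin{align*}
\sqrt{h}\,E_0\;\gtrsim\;\bigl|\{\lambda_n<\phi\le\tfrac12\}\bigr|\;\gtrsim\;\bigl(\tfrac12-\lambda_n\bigr)\sqrt{h},
\end{align*}
i.e.\ only $\tfrac12-\lambda_n\lesssim E_0$, which is vacuous unless $E_0$ is small. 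Turning this into a genuine lower bound on $\lambda_n$ would require a substantially sharper input (for instance a level-dependent gradient bound $|\nabla\phi|\lesssim\lambda\sqrt{\log(1/\lambda)}/\sqrt{h}$ on $\{\phi\approx\lambda\}$ together with a dyadic summation over levels), none of which is in your sketch.

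The paper avoids the pointwise bound altogether, and this is the idea your proposal is missing. It splits the iterations into ``good'' ones ($|\lambda_n-\tfrac12|<\tfrac14$), treated exactly as you do, and ``bad'' ones, whose number is at most $\sim(1+T)(1+E_0^4)$ by Chebyshev applied to the $L^2$ bound. For a bad iteration it proves $R_n\le3R_{n-1}$ by a completely different, variational argument (Lemma \ref{lem_bad_iteration}): using the minimizing-movements characterization of $\chi^n$, any mass of $\Omega^n$ outside $B_{3R_{n-1}}$ could be relocated into $B_{R_{n-1}}$ while preserving the volume and strictly decreasing the linearized energy $L_{\lambda,h}(\phi,\cdot)$, since $\min_{\overline B_{R_{n-1}}}\phi>\max_{\R^d\setminus B_{3R_{n-1}}}\phi$; this contradiction holds for \emph{every} value of $\lambda_n$. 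Since there are only finitely many bad iterations, the multiplicative factor $3$ contributes only a constant $C(d,E_0,T)$ (exponential in $T$, as the paper remarks, but finite). Either supply a comparable mechanism for the iterations with $\lambda_n$ near $0$, or prove the pointwise lower bound on $\lambda_n$ rigorously; as written, the proposal does not establish the proposition.
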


We seperate the indices $n$ into `good' and `bad' iterations.  
A `good' iteration is taken to mean that $|\lambda_n - \frac{1}{2}| < \frac{1}{4}$,
and a bad iteration will be taken to mean that $|\lambda_n - \frac{1}{2}| \geq \frac{1}{4}$. 
The $L^2$-bounds in Proposition \ref{L2 bounds lambda}
give us a suitable level of control over the number of `bad' iterations. 
Indeed, Chebyshev's inequality implies that the number of `bad'
iterations is controlled by $(1+T)(1+E_0^4).$

In the next Lemma we show that in the worst case scenario, 
the radii $R_n$ grow exponentially over consecutive iterations.
\begin{lem}\label{lem_bad_iteration}
$R_n$ may be chosen such that $R_n \leq 3 R_{n-1}$.
\end{lem}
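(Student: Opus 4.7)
The plan is to work directly with the convolution $\phi = G_h \ast \chi^{n-1}$ and exploit radial monotonicity of the Gaussian, combined with the volume constraint $|\Omega^n|=|\Omega^{n-1}|=1$, to show that $\Omega^n \subset B_{3R_{n-1}}$. Since $\phi$ is continuous on $\R^d$ (convolution with a smooth kernel), the superlevel set $\Omega^n = \{\phi > \lambda_n\}$ is open, and this continuity is crucial for the contradiction argument below.

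The core comparison is the following. Since $\Omega^{n-1}\subset B_{R_{n-1}}$, for any $x\in B_{R_{n-1}}$ and any $y\in \Omega^{n-1}$ we have $|x-y|\leq 2R_{n-1}$, so by monotonicity of the heat kernel $G_h(x-y)\geq G_h(2R_{n-1}\, e)$ (where $e$ is any unit vector). Integrating against $\chi^{n-1}$ and using $|\Omega^{n-1}|=1$ gives the lower bound
\begin{equation*}
\phi(x) \geq G_h(2R_{n-1}\, e) \quad \text{for all } x \in B_{R_{n-1}}.
\end{equation*}
Conversely, for any point $x^*$ with $|x^*|>3R_{n-1}$ we have $|x^*-y|>2R_{n-1}$ for every $y\in \Omega^{n-1}$, which by strict monotonicity of $G_h$ gives $\phi(x^*) < G_h(2R_{n-1}\, e)$. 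Combining the two bounds, $\phi(x)>\phi(x^*)$ for every $x\in B_{R_{n-1}}$.

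To conclude, I argue by contradiction: suppose some $x^*\notin B_{3R_{n-1}}$ lies in $\Omega^n$, i.e.\ $\phi(x^*)>\lambda_n$. Then the above strict inequality forces $\phi(x)>\lambda_n$ throughout $B_{R_{n-1}}$, i.e.\ $B_{R_{n-1}} \subset \Omega^n$. Since $\Omega^n$ is open and contains $x^*$, it also contains a small open ball around $x^*$, which is disjoint from $B_{R_{n-1}}$. Hence
\begin{equation*}
|\Omega^n| \;>\; |B_{R_{n-1}}| \;\geq\; |\Omega^{n-1}| \;=\; 1,
\end{equation*}
contradicting the volume constraint $|\Omega^n|=1$ imposed by Step 2 of Algorithm~\ref{MBO_volume}. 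Thus $\Omega^n\subset B_{3R_{n-1}}$, so we may take $R_n = 3R_{n-1}$. There is no real obstacle here beyond keeping track of the strict versus non-strict inequalities; the only subtle point is using continuity of $\phi$ (equivalently, openness of $\Omega^n$) to upgrade the set inclusion $B_{R_{n-1}}\subset \Omega^n$ with an additional open piece near $x^*$ of positive measure.
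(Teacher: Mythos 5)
Your proof is correct, and it reaches the conclusion by a genuinely different mechanism than the paper, even though both arguments start from the same pointwise comparison $\min_{\overline B_{R}}\phi>\max_{\R^d\setminus B_{3R}}\phi$ (estimate (\ref{estimate on phi}) in the paper, derived exactly as you do from $|x-z|<2R<|y-z|$ and $|\Omega^{n-1}|=1$). Where the paper argues variationally --- it builds a competitor $\widetilde\Omega$ by transporting the stray mass $U=\Omega^n\setminus B_{3R}$ into $B_R$ and shows this strictly lowers the linearized energy $L_h(\phi,\cdot)$, contradicting the minimality furnished by Lemma \ref{la_MM} --- you argue directly from the definition $\Omega^n=\{\phi>\lambda_n\}$: a single point of $\Omega^n$ outside $B_{3R}$ forces $\lambda_n<\phi(x^*)<\min_{\overline B_R}\phi$, hence $B_R\subset\Omega^n$, and openness of $\Omega^n$ adds a further ball of positive measure, so $|\Omega^n|>|B_R|\ge|\Omega^{n-1}|=1$, contradicting the volume constraint. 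Your route is more elementary in that it bypasses the minimizing movements interpretation entirely; you correctly flag that the strict excess over $|B_R|$ (via the extra ball at $x^*$) is needed, since $|B_R|\ge 1$ alone gives no contradiction. The trade-off is that the paper's energy-based competitor argument is the one that transfers verbatim to the multi-phase setting of Lemma \ref{graingrowth_lem_bad_iteration}, where the updated phases are no longer single superlevel sets of one function and a direct level-set count would be considerably messier.
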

\begin{proof}
In order to reduce the notation we may assume $n=1$ and write $\phi = G_h \ast \chi^{0}$, $R:=R_0$,
$\chi = \chi^1$ and $\lambda = \lambda_1$. 
We first claim that
\begin{align}\label{estimate on phi}
 \min_{ \overline B_{R}} \phi > \max_{\R^d\setminus B_{3R}} \phi.
\end{align}
This follows immediately from the definition of $\phi$ using $\{\chi^0=1\}\subset B_R$ and the obvious inequality
\begin{align*}
 \left| x- z \right| <  2R < \left| y-z \right| 
\quad \text{for all } x\in B_R, \, y\in \R^d\setminus B_{3R}\text{ and }z\in B_R.
\end{align*}


Now suppose that $U := \Omega \setminus  B_{3R}$ has positive measure.  
This being the case, we may construct a new set, call it $\widetilde{\Omega}$,
by deleting the volume $U$ from $\Omega \setminus  B_{3R}$ and filling it 
into $B_{R}$.
Indeed, since $\left| \Omega\right| = \left| \Omega^0\right|$, we can find a set $\widetilde U \subset B_R$ of the same volume as $U$ 
such that $\widetilde U \cap \Omega = \emptyset$.
Then we set $\widetilde \Omega := (\Omega \setminus U) \cup \widetilde U$ and $\tilde{\chi} = \chara_{\widetilde{\Omega}}$.  
Recall the definition of $L_h$ in (\ref{def_L}).
We claim that $\tilde{\chi}$ has lower linearized energy $L_h(\phi, \cdot)$ than $\chi$,
which is a contradiction.  
By $\int \tilde \chi \,dx = \int \chi\,dx$ and (\ref{estimate on phi}) we have
\begin{align*}
L_h(\phi,\chi) - L_h(\phi,\tilde \chi) 
 = \frac{2}{\sqrt{h}} \int \phi \left( \tilde \chi - \chi \right)  dx 
 = \frac{2}{\sqrt{h}} \int \phi \left( \chara_{\tilde U} - \chara_{U}  \right)dx > 0.
\end{align*}
Thus we conclude that the minimizer of the linearized energy $L_h(\phi, \cdot)$ 
cannot contain any volume outside $B_{3R}$.
\end{proof}

Next we show that over `good' iterations, i.\ e. $ | \lambda_n-\tfrac12 | < \tfrac14$, the growth of $R_{n-1}$ to $R_n$ is 
$\mathcal{O}(|\lambda_n-\tfrac12|\sqrt{h})$, which in terms of Proposition 
\ref{L2 bounds lambda} can be interpreted as `linear growth'.

\begin{lem}\label{lem_good_iteration}
 There exists a universal constant $C<\infty$ such that over `good' iterations we have
\begin{align*}
 R_n \leq R_{n-1} + C \sqrt{h}| \lambda_n -\tfrac12|.
\end{align*}
\end{lem}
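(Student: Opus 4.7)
The plan is to exploit the rapid Gaussian decay of $\phi = G_h \ast \chi^{n-1}$ outside the ball $B_{R_{n-1}}$ that already contains $\Omega^{n-1}$. As in the previous lemma, I would reduce to $n=1$ and write $\phi,\chi,\lambda,R$ for $\phi_1,\chi^1,\lambda_1,R_0$, assuming that $B_R$ is centered at the origin.

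First, I would establish the pointwise bound that for any $x$ with $\mathrm{dist}(x,B_R)=r>0$ one has
\[
 \phi(x) \;\leq\; \tfrac12 \operatorname{erfc}\!\bigl(r/(2\sqrt h)\bigr).
\]
The idea is to enclose $B_R$ in the tangent half-space at the point $p = R\,x/|x|$ closest to $x$: with $e=x/|x|$, the set $B_R$ lies inside $\{y:(y-p)\cdot e \leq 0\}$, so the support constraint $\chi^{n-1}(y)\leq \mathbf{1}_{B_R}(y)$ and the substitution $z=x-y$ reduce the convolution to a one-dimensional Gaussian tail
\[
 \phi(x) \leq \int_{z\cdot e \geq r} G_h(z)\,dz \;=\; \tfrac12 \operatorname{erfc}\!\bigl(r/(2\sqrt h)\bigr).
\]

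Second, I would plug the threshold condition $x\in\Omega^1=\{\phi>\lambda\}$ into this inequality. Any such $x$ outside $B_R$ must then satisfy $\operatorname{erf}(r/(2\sqrt h)) < 1 - 2\lambda = 2(\tfrac12-\lambda)$. If $\lambda\geq\tfrac12$, no such $x$ exists and $R_1=R$ already works (and the claimed bound holds trivially). If instead $\tfrac14<\lambda<\tfrac12$, the right-hand side $2(\tfrac12-\lambda)$ lies in $(0,\tfrac12)$, a compact subinterval on which $\operatorname{erf}^{-1}$ is smooth with $\operatorname{erf}^{-1}(0)=0$; hence $\operatorname{erf}^{-1}(s)\leq c\,s$ for a universal constant $c$, which gives $r \leq C\sqrt h \, |\lambda-\tfrac12|$. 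Taking the supremum over $x\in\Omega^1$ lets us set $R_1 := R + C\sqrt h\,|\lambda-\tfrac12|$ as claimed.

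There is no serious obstacle here; the argument is just the elementary Gaussian half-space estimate together with the behavior of $\operatorname{erf}^{-1}$ near the origin. The only subtle point — and the exact role of the good-iteration hypothesis $|\lambda-\tfrac12|<\tfrac14$ — is to keep the argument $2(\tfrac12-\lambda)$ bounded away from the singularity of $\operatorname{erf}^{-1}$ at $1$, so that the linear bound on $\operatorname{erf}^{-1}$ holds with a constant independent of $n$ and $h$.
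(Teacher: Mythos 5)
Your proof is correct and follows essentially the same route as the paper: bound $\phi$ outside $B_{R_{n-1}}$ by the Gaussian convolution of a tangent half-space, reduce to a one-dimensional tail integral, and use that the inverse Gaussian CDF is Lipschitz near $\tfrac12$ (the paper phrases this as a lower bound on $G^1$ on a compact interval, which is exactly where the `good'-iteration hypothesis enters, just as you identify).
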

\begin{proof}
Given  $|\lambda_n-\tfrac12| < \frac{1}{4}$, 
we want to find a constant $C<\infty$ so that for any direction $e\in \sphere$
we have $\phi < \lambda_n$ and therefore $\chi^{n} = 0$ in $\{x\cdot e > R_{n-1} + C \h| \lambda_n-\tfrac12 |\}.$ 
We prove this by comparing to a half space  $H = \{x\cdot e < R_{n-1}\}$
 whose boundary is tangent to $\partial B_{R_{n-1}}$.
By rotational symmetry we may assume w.\ l.\ o.\ g. that $e=e_1$ so that at a point $x=(x_1,x')$, thanks to the
factorization property of $G$, we can estimate
$$\phi = G_h \ast \chi^{n-1} \leq G_h \ast \chara_H
= \int_{-\infty}^\infty G^1_h(z_1) \chara_{x_1 + z_1 < R_{n-1}} dz_1
= \frac{1}{2} - \int_{0}^{x_1 - R_{n-1}} G^1_h(z_1)\, dz_1. $$
We observe that the right-hand side expression is monotone decreasing in $x_1$ and
find the upper bound for $R_n\geq R_{n-1}$ simply by setting the right-hand side to be equal to 
$\lambda_n$ for $x_1=R_n$:
\begin{align*}
| \lambda_n - \frac{1}{2}|  = \int_0^{\frac1\h({R_n - R_{n-1}})} G^1(z_1) \,dz_1.
\end{align*}
There exists a universal $C<\infty$ such that $\int_0^{C}G^1(z_1) \,dz_1 = \frac{1}{4}$.  Thus,
since $| \lambda_n - \frac{1}{2} | < \frac{1}{4}$, we have $\frac{R_n - R_{n-1}}{\sqrt{h}} <C$.
In turn this gives
$$ \frac{R_n - R_{n-1}}{\sqrt{h}} \min_{|z_1| \leq C } G^1(z_1) < | \lambda_n - \tfrac12 |,$$
which is the desired estimate.
\end{proof}
\begin{proof}[Proof of Proposition \ref{tightness}]
 The result follows by iterating the estimate of the previous two lemmas.  Indeed, over `good' iterations
we have the estimate
$$ R_n \leq R_{n-1} +  C \sqrt{h} | \lambda_n - \tfrac12 |.$$
And over `bad' iterations we have the estimate
$$ R_n \leq 3 R_{n-1}. $$ 
Iterating these two estimates and keeping in mind that we have at most a finite number $\sim (1+T)(1+E_0^4)$
of `bad' iterations we obtain
$$
R_N \leq C(d,T,E_0) \left(R_0 + \sum_{n = 1}^N \sqrt{h} | \lambda_n - \tfrac12 |\right).
$$
Finally we note that by Jensen's inequality and Proposition \ref{L2 bounds lambda}
\begin{align*}
 \sum_{n = 1}^N \sqrt{h} | \lambda_n - \tfrac12 |  
 \leq \left( h \sum_{n=1}^N \frac{|\lambda_n - \tfrac12 |^2}{h} \right)^{\frac{1}{2}} T^{\frac{1}{2}} 
 \leq C(d, E_0, T).
\end{align*}
The constant $C(d,E_0,T)$ yields the estimate on $R^\ast$.
Note that our proof does not give a linear growth estimate in time. 
Indeed, the upper bound $R^\ast$ growth exponentially in $T$. Nevertheless, for our purpose, this is enough. 
\end{proof}

\subsection{Convergence}
%
%
In this section we give the details of the proof of Theorem \ref{thm1}.
We can directly apply Proposition 3.1 of \cite{LauOtt15} to our situation, which we state in Proposition \ref{deltaE}.
In Proposition \ref{deltaD} we prove that we can change the proof of Proposition 4.1 of \cite{LauOtt15} so that it applies in our situation.
For this part we need Proposition \ref{L2 bounds lambda} to apply the one-dimensional lemma, Lemma \ref{1d lemma} stated below.

\begin{prop}[Energy and mean curvature; Prop.\ 3.1 in \cite{LauOtt15}]\label{deltaE}
 Under the convergence assumption (\ref{conv_ass}) we have
 \begin{align*}
 \lim_{h\to0}\int_0^T \delta E_h(\chi^h,\xi)\,dt  =  \frac1{\sqrt{\pi}}\int_0^T \int \left( \nabla\cdot \xi - \nu\cdot \nabla \xi \, \nu\right) \left|\nabla\chi\right|  dt
 \end{align*}
 for any $\xi \in C_0^\infty((0,T)\times \R^d,\R^d).$
\end{prop}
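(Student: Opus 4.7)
My approach is to invoke Proposition 3.1 of \cite{LauOtt15} directly; it is stated at essentially the same level of generality and its hypotheses---the uniform control $\sup_{t\in[0,T]} E_h(\chi^h(t)) \leq E_0$ from the energy-dissipation estimate (\ref{ED-estimate}), the compactness of $\chi^h$ in $L^1((0,T)\times\R^d)$ from Proposition \ref{compactness}, and the integrated energy-convergence assumption (\ref{conv_ass})---are all in place. Crucially, this statement does not involve the threshold value $\lambda_h$ at all and is therefore completely insensitive to the modification from $\tfrac12$ to $\lambda_h$ in Algorithm \ref{MBO_volume}; no analogue of Proposition \ref{L2 bounds lambda} is needed here.

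The starting point is the rewriting (\ref{compute dE 2}),
\begin{equation*}
\delta E_h(\chi^h,\xi) = \tfrac{1}{\sqrt{h}}\int \nabla\xi : (1-\chi^h)\bigl(G_h\, Id - 2h\,\nabla G_h\bigr)*\chi^h\,dx + o(1),
\end{equation*}
where the error is controlled uniformly in $t$ by $E_h(\chi^h)$ and $\|\nabla^2\xi\|_\infty$, and thus tends to zero after integration on $[0,T]$ by the energy-dissipation bound. Setting
\begin{equation*}
T_h := \tfrac{1}{\sqrt{h}}(1-\chi^h)\bigl(G_h\, Id - 2h\,\nabla G_h\bigr)*\chi^h,
\end{equation*}
the task reduces to identifying the weak-$\ast$ limit of $T_h\,dx\,dt$ as matrix-valued measures on $(0,T)\times\R^d$ as $\tfrac{1}{\sqrt{\pi}}(Id - \nu\otimes\nu)\,|\nabla\chi|\,dt$.

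The crucial step is a De Giorgi-type equipartition argument. The trace of $T_h$ is, up to vanishing error, a multiple of the energy density integrand, so by (\ref{conv_ass}) together with the pointwise convergence (\ref{EhtoE}) it converges in $L^1_t$ to $\tfrac{d-1}{\sqrt{\pi}}|\nabla\chi|$. For the quadratic form $\langle T_h e, e\rangle$ in a fixed direction $e\in\sphere$, one exploits the factorization of the heat kernel into one-dimensional Gaussians together with the one-dimensional $\Gamma$-limit structure of the perimeter (as in Lemma \ref{1d lemma} and the corresponding slicing in \cite{LauOtt15}) to obtain the lower bound $\liminf\langle T_h e, e\rangle\,dx\,dt \geq \tfrac{1}{\sqrt{\pi}}\bigl(1-(\nu\cdot e)^2\bigr)|\nabla\chi|\,dt$. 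Summing over an orthonormal basis and comparing with the trace convergence forces equality in every direction; this both rules out a concentration defect and pins down the limiting tensor as $\tfrac{1}{\sqrt{\pi}}(Id - \nu\otimes\nu)|\nabla\chi|\,dt$.

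The principal difficulty is precisely this rigidity step: the assumption (\ref{conv_ass}) provides only convergence of a scalar quantity, and one has to bootstrap it to convergence of a matrix-valued measure with the correct rank-one correction $\nu\otimes\nu$. The mechanism is the one-sided inequality $\liminf \geq$ in each direction, which can be saturated only if the trace converges to the matching scalar limit, so the two together pin down all tensorial components. Once this is established, pairing against $\nabla\xi$ and integrating over $(0,T)$ yields the stated convergence.
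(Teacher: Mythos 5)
Your proposal is correct and follows essentially the same route as the paper: both simply invoke Proposition 3.1 of \cite{LauOtt15}, observing that its only inputs are the compactness of Proposition \ref{compactness} and the convergence assumption (\ref{conv_ass}), and that the term is insensitive to the modified threshold value. Your additional sketch of the internal mechanism (the expansion (\ref{compute dE 2}) plus the directional lower bound/trace rigidity argument) is a faithful, somewhat more detailed account of what the cited proof does, not a different approach.
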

\begin{proof}
 The proof of Proposition 3.1 in \cite{LauOtt15} only uses the convergence that we deduced here in Proposition \ref{compactness}
 and the convergence assumption.
 However, we briefly highlight the line of proof here.
 We observe that the expansion \eqref{compute dE 2} of the first variation of the energy is already in the same form as the limit: multiplication with the anisotropic kernel $G_h\, Id- 2h\nabla G_h $ corresponds to
 multiplication with $Id-\nu\otimes\nu$, i.e.\ projection onto the tangent space.
 More precisely, evaluated at a fixed configuration $\chi$, the right-hand side of \eqref{compute dE 2} converges to the correct quantity.
 Under the strengthened convergence \eqref{conv_ass} this holds true also along the sequence $\chi^h$.
\end{proof}

\begin{prop}[Dissipation and normal velocity]\label{deltaD}
 There exists a function $V\colon (0,T)\times \R^d \to \R$ which is a normal velocity in the sense of (\ref{v=dtX}).
 Given the convergence assumption (\ref{conv_ass}), $V\in L^2(\left| \nabla\chi\right|dt)$ and for any $\xi \in C_0^\infty((0,T)\times \R^d,\R^d)$ we have
 \begin{align}\label{eq_deltaD}
 \lim_{h\to0}\int_0^T \delta D_h(\,\cdot\,, \chi^h(t-h))(\chi^h(t),\xi(t))\,dt  
 =  - \frac1{\sqrt{\pi}}\int_0^T \int V\, \xi \cdot \nu \left|\nabla\chi\right|  dt.
 \end{align}
\end{prop}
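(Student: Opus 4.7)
The plan is to follow the strategy of Proposition~4.1 in \cite{LauOtt15}, adapted to the present setting where the threshold value $\lambda_h$ may deviate from $\tfrac12$. Starting from the expansion already displayed in the excerpt,
\begin{equation*}
\delta D_h(\,\cdot\, - \chi^{n-1})(\chi^n,\xi) = -2 \int \frac{\chi^n - \chi^{n-1}}{h} \, \xi \cdot \h \nabla G_h \ast \chi^n \, dx + o(1),
\end{equation*}
the task splits into two parts: constructing a normal velocity $V$ in the sense of (\ref{v=dtX}) satisfying the required $L^2$-bound, and passing to the limit in the product of two factors that individually converge only weakly.

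The construction of $V$ proceeds as in \cite{LauOtt15}. The dissipation measures $\mu_n$ from the proof of Proposition~\ref{L2 bounds lambda} satisfy $\sum_n \mu_n \leq E_0$ by the energy-dissipation estimate (\ref{ED-estimate}). A Cauchy--Schwarz argument pairing the finite difference $(\chi^n - \chi^{n-1})/h$ with smooth test functions provides uniform control on $\partial_t \chi^h$ in a dual norm, whose weak-$*$ subsequential limit is $\partial_t \chi$. A Radon--Nikodym argument then identifies $\partial_t \chi = -V \, |\nabla \chi|$ with $V \in L^2(|\nabla \chi|\,dt)$ under the assumption (\ref{conv_ass}). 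Equation (\ref{v=dtX}) is obtained by testing the finite difference against smooth $\zeta$, using the compact support guaranteed by Proposition~\ref{tightness}.

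The main work, and the principal obstacle, lies in identifying the limit in (\ref{eq_deltaD}). This requires strong convergence of $\h \nabla G_h \ast \chi^h$ towards $\frac{1}{2\sqrt{\pi}} \nu \, |\nabla \chi|$ in a sense compatible with the weak convergence of the finite difference. The approach is a slicing argument in the direction of the approximate normal, reducing matters to a one-dimensional comparison between $\h \partial_z G^1_h \ast \chara_{(-\infty,0)}$ and the profile cut by the level $\{\phi = \lambda_h\}$. In the MBO case the threshold is fixed at $\tfrac12$ and the resulting identity is sharp, but here the shifted threshold introduces an error whose pointwise-in-time size is of order $(\lambda_h - \tfrac12)^2 / h$. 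Lemma~\ref{1d lemma} absorbs exactly this correction, and Proposition~\ref{L2 bounds lambda} delivers the uniform $L^2$-in-time bound $\int_0^T (\lambda_h - \tfrac12)^2 / h \, dt \lesssim (1+T)(1+E_0^4)$, so that after integration in time the correction vanishes in the limit. With this strong convergence in hand, the identification of the limit product, and hence (\ref{eq_deltaD}), proceeds as in \cite{LauOtt15}, and the principal novelty is precisely the interplay between the varying threshold and the slicing-based strong convergence of the kernel term.
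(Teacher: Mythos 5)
Your overall architecture matches the paper's: $V$ is constructed exactly as in Lemma~2.11 of \cite{LauOtt15} from the energy-dissipation estimate (\ref{ED-estimate}) and the convergence assumption (\ref{conv_ass}), and the identification of the limit rests on the one-dimensional estimate Lemma~\ref{1d lemma} together with the $L^2$-bound on the Lagrange multipliers from Proposition~\ref{L2 bounds lambda}. Those are indeed the right ingredients.

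However, there is a genuine gap in how you deploy Lemma~\ref{1d lemma}. That lemma carries the hypothesis $|\lambda - \tfrac12| < \tfrac18$, and Proposition~\ref{L2 bounds lambda} only gives an $L^2$-in-time bound, so it does not exclude individual iterations with $|\lambda_n - \tfrac12| \geq \tfrac18$ (the ``bad'' iterations of the tightness argument). Since the argument of \cite{LauOtt15} applies the one-dimensional lemma at the mesoscopic time slices $n = \kmax l + k_0$, you must ensure that \emph{every} slice actually used is a good one. The paper's proof devotes its entire Step~2 to exactly this point: a Chebyshev/pigeonhole counting argument shows that the offset $k_0 \in \{1,\dots,\kmax\}$ of the mesoscopic grid can be chosen so that simultaneously (\ref{grid gap}), (\ref{grid lambda av}) and (\ref{grid lambda}) hold, i.e.\ the error terms are controlled on average \emph{and} $|\lambda_{\kmax l + k_0} - \tfrac12| \leq \tfrac18$ for all $l$. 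Your proposal never addresses how the hypothesis of Lemma~\ref{1d lemma} is guaranteed where the lemma is invoked; without the choice of $k_0$ the slicing argument cannot be run. A secondary, quantitative slip: the correction produced by Lemma~\ref{1d lemma} scales like $(\lambda_h - \tfrac12)^2/\sqrt{h}$, not $(\lambda_h-\tfrac12)^2/h$; with your normalization the time integral is merely bounded, which would not make the correction vanish, whereas with the correct power Proposition~\ref{L2 bounds lambda} gives $\frac1{\sqrt h}\int_0^T (\lambda_h - \tfrac12)^2\,dt \lesssim (1+T)(1+E_0^4)\sqrt h \to 0$, as needed.
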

\begin{proof}
 Since we have the same energy-dissipation estimate, namely (\ref{ED-estimate}), with the volume constraint as in \cite{LauOtt15} without a constraint,
 we can directly apply most of the techniques.
 In Lemma \ref{1d lemma}, we show that for most of the iterations we can also apply the finer estimate, Lemma 4.2 in \cite{LauOtt15} when changing 
 the threshold value from $\tfrac12$ to $\lambda$ as in Step 2 of Algorithm \ref{MBO_volume}.
 To make this applicable we need the $L^2$-estimate in Proposition \ref{L2 bounds lambda}.
 \step{Step 1: Construction of the normal velocity and (\ref{v=dtX}).}
  We construct the normal velocity $V$ exactly as in Lemma 2.11 in \cite{LauOtt15}.
  First, one proves that the distributional time derivative $\partial_t \chi$ of $\chi$ is a Radon measure using only
  the energy-dissipation estimate, in our case (\ref{ED-estimate}).
  Using the convergence assumption, for us (\ref{conv_ass}), this measure turns
  out to be absolutely continuous w.\ r.\ t. $\left|\nabla \chi\right| dt$,
  so that one can define $V$ to be the density of $\partial_t \chi$ w.\ r.\ t. $\left|\nabla \chi\right| dt$
  and prove higher integrability, $V\in L^2(\left|\nabla \chi\right| dt)$.
  Then $V$ satisfies (\ref{v=dtX}) by construction.
 \step{Step 2: Argument for (\ref{eq_deltaD}).}
  One of the key ideas in \cite{LauOtt15} is to introduce a mesoscopic time scale $\alpha\h$.
  In Step 2 of the proof of Proposition 4.1 there, one chooses a shift of the mesoscopic time slices
  so that one has control over the error terms. 
  We can make use of this degree of freedom to make sure that in addition 
  the mesoscopic time steps are `good' iterations.
  Given $N=T/h$, $\kmax = \alpha/\h$, 
  $\lmax=N/\kmax$, for any function $\gap \colon \{1,\dots,N\}\to [0,\infty)$
  we can find $k_0\in \{1,\dots,\kmax\}$, such that in addition to
  \begin{align}
    \frac1\lmax \sum_{l=1}^\lmax \gap(\kmax l + k_0)
    &\leq 4 \frac1N \sum_{n=1}^N \gap(n)\label{grid gap}
  \end{align}
  as in \cite{LauOtt15} we furthermore have
  \begin{align}
   \frac1\lmax \sum_{l=1}^\lmax \left(\lambda_{\kmax l + k_0} -\tfrac12\right)^2
    &\leq 4 \frac1N \sum_{n=1}^N \left(\lambda_{n} -\tfrac12\right)^2\quad  \text{and}
    \label{grid lambda av}\\
    \left| \lambda_{\kmax l + k_0} -\tfrac12\right| &\leq \tfrac18\quad(1\leq l \leq \lmax).
    \label{grid lambda}
  \end{align}
  We give a short counting argument for this. 
  By Proposition \ref{L2 bounds lambda}
  \begin{align*}
    &\# \left\{
    k_0\colon \text{(\ref{grid gap}) is violated, (\ref{grid lambda av}) is violated, or (\ref{grid lambda}) 
     is violated for some }l\right\}\\
    &\leq  \# 
    \left\{
    k_0\colon 
    \text{(\ref{grid gap}) is violated}
    \right\}
    +\# \left\{
    k_0\colon 
    \text{(\ref{grid lambda av}) is violated}
    \right\}
    +\sum_{l=1}^\lmax \# \left\{
    k_0\colon
    \text{(\ref{grid lambda}) 
    is violated for $l$}
    \right\}\\
    &\leq \frac\kmax4 + \frac\kmax4 +
    8^2 \sum_{l=1}^\lmax \sum_{k=1}^\kmax \left( \lambda_{\kmax l +k} -\tfrac12\right)^2
    \leq \frac\kmax2 + C
  \end{align*}
  for some constant $C=C(d,E_0,T)$.
  Therefore, we can adapt the proof of Proposition 4.1 in \cite{LauOtt15} so that indeed
  we can link the first variation of the dissipation with the normal velocity.
  Furthermore, the localization argument in Section 5 in \cite{LauOtt15} applies one-to-one
  so that we have (\ref{eq_deltaD}). 
\end{proof}

One of the main tools of the proof in \cite{LauOtt15} are Lemma 4.2 and its rescaled version, Corollary 4.3 in \cite{LauOtt15}.
Roughly speaking, this lemma establishes control over the distance of the super level sets $\{u>\tfrac12\}$ and $\{\tilde u>\tfrac12\}$
in terms of the $L^2$-distance of two functions $u, \tilde u\colon \R\to \R$ , provided at least one of the two functions is 
sufficiently monotone around the threshold value $\tfrac12$, which is measured by the term $\frac{1}{\sqrt h}\int_{ \frac13\leq u \leq \frac23 }
\! \left( \sqrt{h}\,\partial_1 u -\overline c \right)_-^2$; see Lemma \ref{1d lemma} below for the precise statement with more general threshold values, 
which however reduces to the statement in \cite{LauOtt15} when $\lambda = \tilde \lambda =\tfrac12$.
Note that such an estimate would clearly fail without such an extra term on the right-hand side. 

In order to motivate the lemma let us streamline its application to the thresholding scheme.
To this purpose let us ignore the localization $\eta$.
We apply the one-dimensional estimate to the thresholding scheme in a fixed direction $\nu^\ast\in S^{d-1}$ with  $\chi=\chi^h(t)$ and $\tilde \chi=\chi^h(t+\tau)$ for some $\tau =\alpha \h$.
 We think of the fudge factor $\alpha$ as small, but independent of $h$.
After dividing by $\alpha$ and integrating the resulting estimate over the further $d-1$ directions and over the time variable we obtain an estimate for the difference quotient
$\iint \left|\partial_t^\tau \chi^h\right|dx\,dt$ in terms of $\iint \h ( G_{h/2} \ast\partial_t^\tau \chi^h)^2 dx\,dt$, 
the above term measuring the monotonicity of $G_h \ast \chi^h(t-h)$ in direction $\nu^\ast$ and a term involving the $L^2$-norm of $\lambda_h-\frac12$.
The constant $\overline c$ in the term measuring the monotonicity is chosen such that if $\chi^h$ was a half space in direction $\nu^\ast$ this term would vanish. 
One can indeed prove, cf.\ Lemma 4.4 in \cite{LauOtt15}, that this term is bounded by the energy-excess 
\[\varepsilon^2:=\int_0^T \, E_h(\chi^h) - E_h(\chi^\ast)\,dt, \quad \text{for some half space } \chi^\ast \text{ in direction }\nu^\ast.\]
This term in turn is small (after
localization) by our strengthened convergence \eqref{conv_ass} and the local flatness of the limit --- which is guaranteed by De Giorgi's Structure Theorem.
The second term, $\iint \h ( G_{h/2} \ast\partial_t^\tau \chi^h)^2 dx\,dt$, is bounded by the dissipation and is thus finite by the energy-dissipation estimate \eqref{ED-estimate}. 
Therefore we obtain the following estimate for the discrete time derivative
\begin{align*}
 \int_0^T\int \left|\partial_t^\tau \chi^h\right|dx\,dt \lesssim \frac1{\alpha} \left( \varepsilon^2 +sT\right) 
 + \frac1{s^2} \alpha^2 E_0
 +\frac{1}{\alpha s^2} \frac1\h \int_0^T \left(\lambda_h-\frac12\right)^2 dt,
\end{align*}
which differs from the estimate in \cite{LauOtt15} only by the last right-hand side term involving the threshold value.
However, this term is of order $\h$ by our $L^2$-estimate, cf.\ Proposition \ref{L2 bounds lambda}.
We apply a localized version of this estimate and sum over a partition of unity with fineness $r>0$. Sending first $h$ to zero, the first right-hand side term converges to the the energy-excess 
on each patch, while the other terms stay uniformly bounded in $r$ if the patches have finite overlap. Then we take the limit $r\to0$ so that the first right-hand side term vanishes by De Giorgi's Structure Theorem.
Optimizing the additional parameter $s$ and then sending $\alpha$ to zero, the right-hand side stays uniformly bounded.
The resulting estimate resembles
\[
 \int_0^T\int \left|\partial_t^\tau \chi^h\right|dx\,dt  = O(1) \quad \text{for } \tau = o(\h).
\]
In comparison, the analogous estimate coming from \eqref{compactness dtchi} only holds for larger time scales $\tau \sim \h$.


\begin{lem}\label{1d lemma}
Let $u,\, \tilde u\in C^\infty (\R)$, $|\lambda-\tfrac12|<\tfrac18$
$\chi= \chara_{u> \lambda}$, $\tilde \chi= \chara_{\tilde u> \tilde \lambda}$ and $\eta\in C_0^\infty(-2r,2r)$ a
radially non-increasing cut-off for $(-r,r)$ inside $(-2r,2r)$.
Then
\begin{align*}
\frac{1}{\sqrt h}\int \!\eta \left|\chi-\tilde \chi\right| 
\lesssim \frac{1}{\sqrt h}\int_{ \frac13\leq u \leq \frac23 }
\!\!\eta \left( \sqrt{h}\,\partial_1 u -\overline c \right)_-^2  \!+ s
 +\frac1{s^2}  \frac{1}{\sqrt h}\int\!\eta \left(u-\tilde u \right)^2
+ \frac r{s^2}\frac{\big(\lambda-\tilde \lambda\big)^2}{\h}
\end{align*}
for any $s\ll 1$.
\end{lem}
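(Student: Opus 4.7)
The plan is to reduce the asymmetric-threshold statement to the equal-threshold analogue, Lemma~4.2 of \cite{LauOtt15}, by shifting one of the functions, and then to verify that the equal-threshold estimate survives the replacement of the threshold value $\tfrac12$ by $\lambda$.

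First, I would introduce the shifted function
\[
 \tilde v := \tilde u + (\lambda - \tilde \lambda),
\]
so that $\tilde \chi = \chara_{\tilde u > \tilde \lambda} = \chara_{\tilde v > \lambda}$. Expanding the square and using $\int \eta\,dx \lesssim r$ (since $\eta$ is supported in $(-2r,2r)$) yields
\[
 \frac{1}{\sqrt h}\int \eta (u - \tilde v)^2\, dx \;\lesssim\; \frac{1}{\sqrt h}\int \eta (u - \tilde u)^2\, dx + \frac{r(\lambda - \tilde\lambda)^2}{\sqrt h},
\]
which, after division by $s^2$, accounts precisely for the last two data-dependent terms on the right-hand side. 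It therefore suffices to establish the equal-threshold estimate
\[
 \frac{1}{\sqrt h}\int \eta \left|\chara_{u>\lambda} - \chara_{\tilde v>\lambda}\right| dx \;\lesssim\; \frac{1}{\sqrt h}\int_{\frac13 \leq u \leq \frac23}\! \eta\, (\sqrt h\,\partial_1 u - \overline c)_-^2\, dx \;+\; s \;+\; \frac{1}{s^2}\frac{1}{\sqrt h}\int \eta (u-\tilde v)^2\, dx
\]
for the pair $(u,\tilde v)$ with common threshold $\lambda$.

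Second, I would obtain this equal-threshold estimate by adapting the argument of Lemma~4.2 in \cite{LauOtt15}: decompose the symmetric difference $\{u>\lambda\}\triangle\{\tilde v>\lambda\}$ according to whether $|u-\lambda|$ is larger or smaller than a small parameter $\delta \sim s$. On the ``large'' piece, the pointwise bound $|u - \tilde v| \geq |u-\lambda| \geq \delta$ on the symmetric difference converts the measure into the $L^2$-term via Chebyshev, producing the $\frac{1}{s^2}\frac{1}{\sqrt h}\int \eta (u-\tilde v)^2$ contribution. On the ``small'' piece, the symmetric difference is contained in $\{|u-\lambda|\leq \delta\}\subset \{\tfrac13\leq u\leq \tfrac23\}$, where a lower bound on $\partial_1 u$ controls the one-dimensional Lebesgue measure via an elementary layer-cake inequality; the excess set $\{\sqrt h\,\partial_1 u < \overline c - O(1)\}$ is in turn controlled by the first right-hand side term (again via Chebyshev), while the complement yields the linear term $s$. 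The contributions are balanced by Young's inequality in the parameter $s$.

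The main obstacle is checking that the above adaptation indeed goes through for $\lambda \neq \tfrac12$. The key observation is the assumption $|\lambda - \tfrac12| < \tfrac18$, which places $\lambda \in (\tfrac38,\tfrac58)$ and hence strictly interior to the monotonicity region $(\tfrac13,\tfrac23)$ with a uniform gap of at least $\tfrac{1}{24}$ on either side. Since the proof of Lemma~4.2 in \cite{LauOtt15} only uses that a fixed-size neighborhood of the threshold is contained in the monotonicity region (in order to run the co-area/Cavalieri step on $\{\lambda-\delta\leq u \leq \lambda+\delta\}$ for all $\delta$ smaller than a universal constant), the proof is structurally unchanged upon replacing $\tfrac12$ by $\lambda$; only the implicit constants in the Young's inequality balancing become slightly different. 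This completes the plan.
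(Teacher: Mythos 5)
Your proposal is correct and follows essentially the same route as the paper: reduce to the equal-threshold estimate of \cite{LauOtt15} by a constant shift, absorb the $(\lambda-\tilde\lambda)^2$ contribution using $\int\eta\,dx\lesssim r$, and use $|\lambda-\tfrac12|<\tfrac18$ together with $s\ll1$ to keep the threshold neighborhood inside $\{\tfrac13\le u\le\tfrac23\}$. The only cosmetic difference is that the paper shifts \emph{both} functions by their respective thresholds, setting $v=u-\lambda+\tfrac12$ and $\tilde v=\tilde u-\tilde\lambda+\tfrac12$, so that Corollary~4.3 of \cite{LauOtt15} applies verbatim at threshold $\tfrac12$ and no re-derivation of that lemma at the shifted threshold $\lambda$ is required.
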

\begin{proof}[Proof of Lemma \ref{1d lemma}]
The lemma follows from Corollary 4.3 in \cite{LauOtt15} with a shifting argument to make the
threshold value $\lambda$ appear.
Set $v := u -\lambda +\frac12$ so that $\chi =\chara_{v > \frac12}$ (and analogously with $\tilde v$) and Corollary 4.3 in \cite{LauOtt15} applies for
$v,\,\tilde v$: For any $s>0$, we have
\begin{align}\label{1dlem_proof}
 \frac{1}{\sqrt h}\int \eta |\chi-\tilde \chi| 
\lesssim & \frac{1}{\sqrt h}\int_{| v-\frac12| \leq s }
\eta \left( \sqrt{h}\,\partial_1 v -\overline c \right)_-^2 + s
 +\frac1{s^2}  \frac{1}{\sqrt h}\int \eta \left(v- \tilde v \right)^2.
\end{align}
Now we can resubstitute $v = u -\lambda +\tfrac12$ and $\tilde v = \tilde u -\tilde \lambda +\tfrac12$ on the right-hand side.
Then the integrand of the first integral stays unchanged since $\lambda$ is constant. 
If $|\lambda-\tfrac12|<\tfrac18$ and $s\ll 1$, the domain of integration is
\begin{align*}
 \big\{\big|v -\tfrac12\big|<s\big\} =  
 \{| u -\lambda|<s\} \subset \big\{\tfrac13 < u < \tfrac23\big\}.
\end{align*}
Since
$
 (v -\tilde v)^2
 \lesssim ( u - \tilde  u)^2
 +(\lambda - \tilde \lambda)^2,
$
also the second integral is in the form of the claim.
\end{proof}

%
%
%
%
%
%
%
\section{Mean-curvature flow with external force}\label{sec:force}
The following algorithm is based on an idea of Mascarenhas in \cite{mascarenhas1992diffusion} but we allow the forcing term to be space-time dependent.
\subsection{Algorithm and main result}
\begin{algo}\label{MBO_force}
Given the phase $\Omega$ at time $t=(n-1)h$, 
obtain the evolved phase $\Omega'$ at time $t=nh$ by:
\begin{enumerate}
\vspace{-3pt}
 \item Convolution step:
$
 \phi := G_h\ast \chara_{\Omega}.
$
\item Thresholding step:
$
  \Omega' : =  \{ \phi > \tfrac12 - \tfrac1{2\sqrt{\pi}}f(x,nh) \h \}.
$
\end{enumerate}
\end{algo}

The following weak formulation of mean-curvature flow with an external force has already been introduced in \cite{LucStu95}.
\begin{defn}[Motion by mean curvature with external force]\label{def_motion_by_mean_curvature+f}
 We say that
$
\chi: (0,T)\times \R^d
\to \{0,1\} 
$
\emph{moves by mean curvature with external force $f\in C^\infty([0,T]\times \R^d)$ and initial data $\chi^0$}
if there exists a function
$V\colon (0,T)\times\R^d\to \R$ with $V\in L^2(\left| \nabla \chi\right|dt)$, which is the normal velocity in the sense of (\ref{v=dtX}), such that
\begin{align}\label{H=v+f}
\int_0^T \int \left( \nabla\cdot \xi - \nu\cdot \nabla \xi \, \nu\right) \left|\nabla\chi\right|  dt
 = \int_0^T \int \left(V - f\right) \, \xi \cdot \nu \left| \nabla \chi \right| dt
\end{align}
for any $\xi \in C_0^\infty((0,T)\times \R^d,\R^d)$.
\end{defn}

 It is easy to see that also Algorithm \ref{MBO_force} can be interpreted as a minimizing movements scheme.
In fact, as in Lemma \ref{la_MM} we add a linear functional as a correction.
\begin{lem}[Minimizing movements interpretation]\label{force_la_MM}
 Given $\chi^0\in \{0,1\}$, let $\chi^1$ be obtained by Algorithm \ref{MBO_force}.
 Then $\chi^1$ solves
\begin{align}\label{force_MM_lagrange}
 \min \quad  E_h( \chi) + D_h(\chi-\chi_0) 
- \frac1{\sqrt{\pi}}\int f(nh,x) \, \chi\,dx,
\end{align}
where the minimum runs over all $\chi\colon \R^d \to \{0,1\}.$
\end{lem}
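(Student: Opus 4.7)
The plan is to adapt the strategy from Lemma \ref{la_MM} and rewrite the sum $E_h(\chi)+D_h(\chi-\chi^0)-\frac{1}{\sqrt{\pi}}\int f(nh,x)\,\chi\,dx$, up to constants independent of $\chi$, as a purely linear functional in $\chi$. The correct ``linearized energy'' here, by analogy with \eqref{def_L}, should be
\begin{equation*}
 L_{f,h}(\phi,\chi) := \frac{1}{\sqrt{h}}\int (1-\chi)\phi + \chi\Bigl(1-\phi-\tfrac{\sqrt{h}}{\sqrt{\pi}}f(nh,x)\Bigr)\,dx,
\end{equation*}
which is pointwise minimized in $\chi\in\{0,1\}$ by the choice $\chi = \chara_{\{\phi > \tfrac12 - \tfrac{1}{2\sqrt{\pi}}f(nh,x)\sqrt{h}\}}$, matching exactly the thresholding rule in Algorithm \ref{MBO_force}.

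First I would establish the algebraic identity
\begin{equation*}
 E_h(\chi)+D_h(\chi-\chi^0)-\tfrac{1}{\sqrt{\pi}}\int f(nh,x)\,\chi\,dx = L_{f,h}(\phi,\chi) + C(\chi^0),
\end{equation*}
where $C(\chi^0)$ depends only on $\chi^0$. This is the analogue of \eqref{simple_computation}. The computation expands $D_h(\chi-\chi^0)$ as $\frac{1}{\sqrt{h}}\int\chi\,G_h\!\ast\chi\,dx - \frac{2}{\sqrt{h}}\int\chi\,G_h\!\ast\chi^0\,dx + \frac{1}{\sqrt{h}}\int\chi^0 G_h\!\ast\chi^0\,dx$, adds $E_h(\chi)=\frac{1}{\sqrt{h}}\int(1-\chi)G_h\!\ast\chi\,dx$, and uses $\int G_h\!\ast\chi\,dx=\int\chi\,dx$ together with $\phi=G_h\!\ast\chi^0$ to collapse everything but the $\chi^0$-only term into the claimed linear expression in $\chi$.

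Second, given this identity, minimizing \eqref{force_MM_lagrange} over $\chi\colon\R^d\to\{0,1\}$ reduces to pointwise minimization of the integrand of $L_{f,h}(\phi,\chi)$. The integrand is bounded below by $\phi\wedge(1-\phi-\tfrac{\sqrt{h}}{\sqrt{\pi}}f(nh,x))$, and the choice of $\chi^1$ in Algorithm \ref{MBO_force}, namely $\chi^1(x)=1$ precisely when $1-\phi(x)-\tfrac{\sqrt{h}}{\sqrt{\pi}}f(nh,x)<\phi(x)$, attains the pointwise minimum almost everywhere, concluding the proof.

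There is essentially no genuine obstacle here: the only technical point is to be careful with measure-zero sets where the two terms in the pointwise minimum coincide (corresponding to level sets $\{\phi = \tfrac12-\tfrac{1}{2\sqrt{\pi}}f(nh,\cdot)\sqrt{h}\}$ of measure zero by Sard's Lemma for almost every choice of the threshold), a subtlety already handled in the analogous Remark following Algorithm \ref{MBO_volume}. Otherwise the argument is a direct translation of the volume-preserving case, with the Lagrange multiplier $\tfrac{2\lambda-1}{\sqrt{h}}$ replaced by the prescribed space-dependent term $-\tfrac{1}{\sqrt{\pi}}f(nh,x)$.
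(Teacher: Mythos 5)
Your proposal is correct and follows exactly the route the paper intends: the paper omits the proof of Lemma \ref{force_la_MM}, remarking only that one proceeds ``as in Lemma \ref{la_MM} we add a linear functional as a correction,'' and your argument is precisely that translation — the identity reducing $E_h+D_h-\frac{1}{\sqrt{\pi}}\int f\chi$ to a linear functional via $\int G_h\ast\chi\,dx=\int\chi\,dx$, followed by pointwise minimization of the integrand, which the thresholding rule of Algorithm \ref{MBO_force} attains. (The Sard's-Lemma remark is not even needed here: on the set where the two branches of the pointwise minimum coincide, either value of $\chi$ is minimizing, so $\chi^1$ attains the minimum regardless.)
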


\begin{cor}[Euler-Lagrange equation]\label{cor_ELG-forcing}
Given $\chi^0\in \{0,1\}$, let $\chi^1$ be obtained by Algorithm \ref{MBO_force}.
Then $\chi^1$ solves the Euler-Lagrange equation
\begin{align}\label{ELG-2}
 \delta E_h (\chi^1,\xi)+\delta D_h (\,\cdot\, - \chi^0)(\chi^1,\xi) 
- \frac{1}{\sqrt{\pi}}\int \nabla\cdot\left(f(nh,x) \, \xi \right) \chi^1 \,dx =0.
\end{align}
\end{cor}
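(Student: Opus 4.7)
The plan is to mimic verbatim the derivation of Corollary \ref{cor_ELG} from Lemma \ref{la_MM}: I will take an inner variation of the minimizing movements functional from Lemma \ref{force_la_MM} along the flow of $\xi$, and exploit the fact that $\chi^1$ is a minimizer to conclude that the derivative at $s=0$ vanishes. The only new ingredient compared with Corollary \ref{cor_ELG} is the space-dependence of $f$, which means the first variation of the linear correction $-\frac1{\sqrt\pi}\int f(nh,\cdot)\,\chi\,dx$ no longer collapses to a multiple of $\int(\nabla\cdot\xi)\chi\,dx$, but produces $\nabla\cdot(f\xi)$ after integration by parts.

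More concretely, given $\xi\in C_0^\infty(\R^d,\R^d)$, let $\chi_s$ be the solution of
\[
\partial_s\chi_s + \xi\cdot\nabla\chi_s = 0,\qquad \chi_0=\chi^1.
\]
First I would note that $\chi_s$ remains admissible for the variational problem: indeed, if $\Phi_s$ denotes the smooth flow of $\xi$ on $\R^d$, then $\chi_s = \chi^1\circ\Phi_s^{-1} = \chara_{\Phi_s(\Omega^1)}$ is the characteristic function of the diffeomorphic image of $\Omega^1$, hence takes values in $\{0,1\}$.

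Second, since by Lemma \ref{force_la_MM} the function $\chi^1$ is a minimizer of
\[
\chi\mapsto E_h(\chi)+D_h(\chi-\chi^0) - \frac{1}{\sqrt\pi}\int f(nh,x)\,\chi\,dx
\]
over characteristic functions, and since $\chi_s$ is an admissible one-parameter family with $\chi_0=\chi^1$, the derivative at $s=0$ must vanish. The derivatives of the first two terms are exactly $\delta E_h(\chi^1,\xi)$ and $\delta D_h(\,\cdot\,-\chi^0)(\chi^1,\xi)$ by definition.

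Third, for the linear correction I differentiate under the integral using the transport equation:
\[
\frac{d}{ds}\bigg|_{s=0}\int f(nh,x)\,\chi_s\,dx = -\int f(nh,x)\,\xi\cdot\nabla\chi^1\,dx = \int \chi^1\,\nabla\cdot\bigl(f(nh,\cdot)\,\xi\bigr)\,dx,
\]
where the last step is just integration by parts against the $BV$-function $\chi^1$, valid because $f(nh,\cdot)\,\xi\in C_0^\infty(\R^d,\R^d)$. Combining the three contributions with the sign in \eqref{force_MM_lagrange} yields \eqref{ELG-2}.

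There is no real obstacle here; the whole argument is a routine application of the minimizing movements interpretation, and the mild point to be careful about is only that $\chi_s$ is genuinely an admissible competitor (which is the reason we work with \emph{inner} rather than outer variations) and that the integration by parts is legitimate for the $BV$-indicator $\chi^1$ against the smooth compactly supported vector field $f(nh,\cdot)\,\xi$.
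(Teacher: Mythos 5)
Your proposal is correct and is exactly the argument the paper intends: the corollary is stated without proof precisely because, as with Corollary \ref{cor_ELG}, it follows directly from the minimizing movements interpretation (Lemma \ref{force_la_MM}) by taking the inner variation along the flow of $\xi$ and setting the derivative at $s=0$ to zero. Your computation of the first variation of the linear correction, $\frac{d}{ds}\big|_{s=0}\int f\,\chi_s\,dx=-\int f\,\xi\cdot\nabla\chi^1\,dx=\int\chi^1\,\nabla\cdot(f\xi)\,dx$, and the resulting signs match \eqref{ELG-2}.
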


We can prove a conditional convergence result for Algorithm \ref{MBO_force} under the same assumption as in Section \ref{sec:vol}.
\begin{thm}\label{thm2}
Let $T<\infty$, $\chi^0\in\{0,1\}$ with $E(\chi^0) <\infty$ and $\{\chi^0=1\} \subset \subset \R^d$ and $f\in C^\infty([0,T]\times \R^d)$.
After passage to a subsequence, the functions $\chi^h$ obtained by Algorithm \ref{MBO_force}
converge to a function $\chi$ in $L^1((0,T)\times \R^d)$. Under the convergence assumption
(\ref{conv_ass}), $\chi$ moves by mean curvature with external force $f$
in the sense of Definition \ref{def_motion_by_mean_curvature+f}.
\end{thm}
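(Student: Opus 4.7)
The plan is to mirror the proof of Theorem \ref{thm1}, replacing the role of the Lagrange multiplier $\Lambda$ by the prescribed smooth forcing $f$. The starting point is the Euler--Lagrange equation (\ref{ELG-2}) deduced from the minimising movements interpretation (Lemma \ref{force_la_MM}). I will first derive an energy--dissipation estimate, then obtain compactness and tightness of the approximate phases $\chi^h$, and finally pass to the limit in each of the three terms of (\ref{ELG-2}) integrated over $(0,T)$.

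For the energy--dissipation estimate, testing (\ref{force_MM_lagrange}) against $\chi^{n-1}$ yields
\[
E_h(\chi^n)-E_h(\chi^{n-1})+D_h(\chi^n-\chi^{n-1})\leq \frac{1}{\sqrt{\pi}}\int f(nh,\cdot)\bigl(\chi^n-\chi^{n-1}\bigr)\,dx.
\]
Summing in $n$ and moving the time-difference onto $f$ by discrete summation by parts, the smoothness $f\in C^1$ bounds the right-hand side by $\|f\|_\infty(|\Omega^N|+|\Omega^0|)+h\,\|\partial_t f\|_\infty\sum_n|\Omega^n|$. Tightness is considerably simpler than in Section \ref{sec:vol}: since the threshold deviates pointwise from $\tfrac12$ by the \emph{known} amount $O(\sqrt h\,\|f\|_\infty)$, every iteration is ``good'' in the sense of Lemma \ref{lem_good_iteration} for $h\ll\|f\|_\infty^{-2}$, and no analogue of Proposition \ref{L2 bounds lambda} is needed; iterating Lemma \ref{lem_good_iteration} yields a uniform bound $|\Omega^n|\leq C(|\Omega^0|,T,\|f\|_\infty)$. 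Combined with the Kolmogorov-type argument of Proposition \ref{compactness}, this produces a subsequential limit $\chi\in L^1((0,T)\times\R^d,\{0,1\})$ with $\chi(t)\in BV(\R^d,\{0,1\})$ for a.e.\ $t$.

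To pass to the limit in (\ref{ELG-2}), the first variation of the energy is handled verbatim by Proposition \ref{deltaE}, yielding the left-hand side of (\ref{H=v+f}). The forcing term $-\tfrac{1}{\sqrt\pi}\int\nabla\cdot(f\xi)\chi^h\,dx$ converges by strong $L^1$-convergence of $\chi^h$ tested against the smooth function $\nabla\cdot(f\xi)$; the chain rule for BV functions rewrites the limit as $\tfrac{1}{\sqrt\pi}\int_0^T\int f\,\xi\cdot\nu\,|\nabla\chi|\,dt$, matching the forcing contribution in (\ref{H=v+f}). The dissipation term is treated by the analogue of Proposition \ref{deltaD}, whose key ingredient is the one-dimensional Lemma \ref{1d lemma}; it applies here because the spatially varying threshold $\lambda(x)=\tfrac12-\tfrac{f(x,nh)}{2\sqrt\pi}\sqrt h$ satisfies $|\lambda-\tfrac12|=O(\sqrt h)\ll\tfrac18$ for all small $h$, while the cross-step mismatch $(\lambda_n-\lambda_{n+1})^2/\sqrt h$ appearing on the right-hand side of Lemma \ref{1d lemma} is of order $h^{3/2}\|\partial_t f\|_\infty^2\to 0$. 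Collecting the three limits, (\ref{ELG-2}) passes to (\ref{H=v+f}), while (\ref{v=dtX}) holds by the construction of $V$.

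The main obstacle I expect is the space dependence of the threshold when invoking Lemma \ref{1d lemma} along a fixed direction $\nu^\ast$: unlike in Section \ref{sec:vol}, where $\lambda_n$ is a single scalar per time step, here $\lambda$ is a function of $x$, so one has to freeze $f$ at a representative point of each mesoscopic patch. Smoothness of $f$ shows that the oscillation of $\lambda$ over such a patch of diameter $\sim\sqrt h$ is itself $O(\sqrt h)$, of the same order as the remainders already absorbed in Lemma \ref{1d lemma}; nevertheless, this point needs to be tracked carefully in the localisation step, analogously to Section 5 of \cite{LauOtt15}.
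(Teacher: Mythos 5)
Your proposal is correct and follows essentially the same route as the paper: minimizing movements interpretation, energy--dissipation estimate via discrete summation by parts onto $f$, tightness from the fact that every iteration is ``good'' (yielding linear growth $R_n\leq R_{n-1}+Ch\|f\|_\infty$), compactness, and limit passage in the three terms of (\ref{ELG-2}), with the forcing term handled by strong $L^1$-convergence and the dissipation term by an adapted one-dimensional estimate. The spatial variation of the threshold that you flag as the main obstacle is exactly what the paper's Lemma \ref{1d lemma+f} absorbs, via the extra error term $\h^3\int\eta\left(\partial_1 f\right)^2$, matching your order-of-magnitude accounting.
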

We follow the same strategy as in Section \ref{sec:vol} to prove the theorem.
From the Euler-Lagrange equation (\ref{ELG-2}),
the mean curvature and normal velocity will be recovered from the limits of the first
variations of the energy and dissipation, respectively.  The convergence of the third term
in this algorithm is much easier.  $f$ is a smooth function in time and space so the convergence
of the third term is an immediate consequence of the compactness of the $\chi^h$ (c.f. 
Prop. \ref{force_compact}).
As before we write $A\lesssim B$ if there exists a constant $C=C(d)<\infty$ such that $A\leq C B$ and note that we also have (\ref{Eh<E}).

\subsection{Compactness}
Since there are no `bad' iterations as in Section \ref{sec:vol}, the argument in Lemma \ref{lem_good_iteration} yields the following linear growth estimate
and is sufficient to prove the boundedness of the sets.
Here we even have the optimal growth rate of the radii w.\ r.\ t. the time horizon $T$.
\begin{prop}\label{force_tightness}
There exists a universal constant $C<\infty$ such that for any $n=1,\dots,N$
\begin{align*}
 R_n \leq R_{n-1} + C h \|f\|_{\infty}.
\end{align*}
In particular, if $\Omega^0\subset B_R$ and the sets $\Omega^h(t)$ are obtained by Algorithm \ref{MBO_force},
then $\Omega^h(t) \subset B_{R^\ast} $ for all $t\leq T$, where $R^\ast =   R(1+CT\|f\|_\infty)$ for some universal constant $C<\infty.$
\end{prop}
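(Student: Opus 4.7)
The plan is to mirror the proof of Lemma~\ref{lem_good_iteration}, replacing the role of $\lambda_n$ by the space-time dependent threshold $\tfrac{1}{2} - \tfrac{1}{2\sqrt{\pi}}f(x,nh)\sqrt{h}$, and then iterate. For the one-step bound, I fix an arbitrary direction $e \in \sphere$ and compare $\phi = G_h \ast \chi^{n-1}$ to $G_h \ast \chara_{H_e}$, where $H_e := \{y \cdot e < R_{n-1}\}$ is the half-space tangent to $\partial B_{R_{n-1}}$. Since $\Omega^{n-1} \subset B_{R_{n-1}} \subset H_e$ we have $\phi \leq G_h \ast \chara_{H_e}$, and by the tensor-product structure of the heat kernel, at any point with $x \cdot e = x_1 > R_{n-1}$,
\[
G_h \ast \chara_{H_e}(x) = \tfrac{1}{2} - \int_0^{x_1 - R_{n-1}} G_h^1(s)\,ds.
\]

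In order that $\chi^n(x) = 0$ at such a point, it is sufficient that $\phi(x)$ lie below the threshold, which in view of the previous display is guaranteed as soon as
\[
\int_0^{x_1 - R_{n-1}} G_h^1(s)\,ds \;\geq\; \tfrac{1}{2\sqrt{\pi}} f(x,nh)\sqrt{h}.
\]
When $f(x,nh) \leq 0$ there is nothing to show; otherwise I bound the right-hand side uniformly by $\tfrac{1}{2\sqrt{\pi}}\|f\|_\infty \sqrt{h}$ and rescale $s = \sqrt{h}\,w$ to reduce the problem to finding the smallest $w_0>0$ with $\int_0^{w_0} G^1(w)\,dw \geq \tfrac{1}{2\sqrt{\pi}}\|f\|_\infty\sqrt{h}$. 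Since $G^1(0)>0$ and $G^1$ is continuous, for $h$ small (comparable to $\|f\|_\infty^{-2}$) one has the linearization $\int_0^{w_0} G^1(w)\,dw \geq \tfrac{1}{2} G^1(0)\, w_0$, which yields $w_0 \lesssim \|f\|_\infty \sqrt{h}$, hence $R_n - R_{n-1} = \sqrt{h}\,w_0 \leq C h \|f\|_\infty$. This is the first claim.

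For the global bound, iterate the one-step estimate over $N = T/h$ steps to obtain $R_N \leq R_0 + N\cdot C h\|f\|_\infty = R + CT\|f\|_\infty$, and (up to enlarging $R$ to be $\geq 1$, which we may do since $\Omega^0$ is bounded) absorb this into $R^\ast = R(1+CT\|f\|_\infty)$. There is no real obstacle here, in contrast to Section~\ref{sec:vol}: the absence of the Lagrange multiplier means there are no `bad' iterations and hence no need for a dichotomy argument, no need for the $L^2$-estimate on the threshold values, and the resulting growth rate is linear in $T$ rather than exponential. The only point worth flagging is that the improved order $O(h)$ per step, rather than the $O(\sqrt{h})$ of a `good' iteration in Lemma~\ref{lem_good_iteration}, is precisely what reflects the $O(\sqrt{h})$ size of the threshold deviation built into Algorithm~\ref{MBO_force}.
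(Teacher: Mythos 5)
Your proof is correct and follows exactly the route the paper intends: it states Proposition~\ref{force_tightness} without a written proof, remarking only that the half-space comparison argument of Lemma~\ref{lem_good_iteration} applies, and your argument is precisely that comparison with the threshold deviation $\tfrac{1}{2\sqrt{\pi}}\|f\|_\infty\sqrt{h}$ replacing $|\lambda_n-\tfrac12|$, yielding the $O(h)$ per-step growth and the linear-in-$T$ iteration. Your flagging of the implicit smallness requirement $h\lesssim \|f\|_\infty^{-2}$ (needed so the rescaled equation for $w_0$ is solvable) and of the normalization $R\geq 1$ is appropriate and consistent with the paper's conventions elsewhere (e.g.\ Lemma~\ref{1d lemma+f}).
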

The following lemma states the a priori estimate coming from the minimizing movements interpretation.
Here, we obtain extra terms coming from the forcing term which did not appear in Section \ref{sec:vol} due to the special structure of the equation there.
\begin{lem}[Energy-dissipation estimate]
 The approximate solutions $\chi^h$ constructed in Algorithm \ref{MBO_force} satisfy
\begin{align}\label{force_ED-estimate}
E_h(\chi^N) + \sum_{n=1}^N D_h(\chi^n-\chi^{n-1}) \leq E_0 
+ C \left(\|f\|_{\infty}+  \int_0^T \int \left|\partial_t f\right| dx\,dt \right).
\end{align}
\end{lem}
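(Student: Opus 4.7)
The plan is to start from the minimizing movements interpretation (Lemma \ref{force_la_MM}), which gives, by testing the minimality of $\chi^n$ against the competitor $\chi^{n-1}$, the one-step inequality
\begin{align*}
E_h(\chi^n) + D_h(\chi^n - \chi^{n-1}) \leq E_h(\chi^{n-1}) + \frac{1}{\sqrt{\pi}} \int f(nh,x)\,(\chi^n - \chi^{n-1})\,dx.
\end{align*}
Summing this telescoping-type estimate from $n=1$ to $N$ and using (\ref{Eh<E}) to absorb $E_h(\chi^0) \leq E_0$ yields
\begin{align*}
E_h(\chi^N) + \sum_{n=1}^N D_h(\chi^n - \chi^{n-1}) \leq E_0 + \frac{1}{\sqrt{\pi}} \sum_{n=1}^N \int f(nh,x)\,(\chi^n - \chi^{n-1})\,dx.
\end{align*}
So the entire task reduces to controlling the forcing sum on the right by $C(\|f\|_\infty + \int_0^T \!\!\int |\partial_t f|\,dx\,dt)$.

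The natural tool here is Abel summation (discrete integration by parts in time), which rewrites
\begin{align*}
\sum_{n=1}^N \int f(nh,x)\,(\chi^n - \chi^{n-1})\,dx
&= \int f(Nh,x)\,\chi^N\,dx - \int f(h,x)\,\chi^0\,dx \\
&\quad - \sum_{n=1}^{N-1} \int \chi^n\,\bigl(f((n+1)h,x) - f(nh,x)\bigr)\,dx.
\end{align*}
The two boundary terms are each bounded by $\|f\|_\infty\,|\Omega^n|$, and the tightness result (Proposition \ref{force_tightness}) guarantees $\Omega^h(t) \subset B_{R^\ast}$ for all $t \in [0,T]$, so $|\Omega^n| \lesssim 1$ with a constant depending only on $\chi^0$, $T$, $\|f\|_\infty$ and $d$. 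For the remaining sum, I would use $|\chi^n| \leq 1$ and the fundamental theorem of calculus in the form
\begin{align*}
|f((n+1)h,x) - f(nh,x)| \leq \int_{nh}^{(n+1)h} |\partial_t f(s,x)|\,ds,
\end{align*}
so that the sum telescopes into $\int_0^T \int |\partial_t f|\,dx\,dt$ up to boundary contributions at $t=0$ and $t=Nh$.

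The main (mild) obstacle is simply to make sure the volume factor in the boundary term is indeed bounded independently of $h$. This is exactly what Proposition \ref{force_tightness} provides: the sets $\Omega^h(t)$ stay inside a fixed ball $B_{R^\ast}$, so the $n=0$ and $n=N$ terms contribute $\lesssim \|f\|_\infty |B_{R^\ast}|$, which is absorbed into the constant $C$ in the statement. No further subtleties arise, because the linear forcing, unlike the volume constraint treated in Section \ref{sec:vol}, does not require an independent $L^2$-bound on Lagrange multipliers.
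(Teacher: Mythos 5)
Your proposal is correct and follows essentially the same route as the paper's proof: the one-step minimality comparison from Lemma \ref{force_la_MM}, summation, discrete integration by parts (Abel summation) on the forcing sum, the fundamental theorem of calculus to convert the telescoped differences into $\int_0^T\!\!\int|\partial_t f|\,dx\,dt$, and Proposition \ref{force_tightness} to bound the boundary terms by $\|f\|_\infty$ times a fixed volume. No discrepancies worth noting.
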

\begin{proof}
 Comparing $\chi^n$ to $\chi^{n-1}$, we have
 \begin{align*}
  E_h(\chi^n) + D_h(\chi^n-\chi^{n-1}) - \frac1{\sqrt{\pi}}\int f( nh) \, \chi^n\,dx 
  \leq E_h(\chi^{n-1}) - \frac1{\sqrt{\pi}}\int f(nh) \, \chi^{n-1}\,dx. 
 \end{align*}
 Iterating this estimate yields
 \begin{align}\label{force_comparing_for_edestimate}
  E_h(\chi^N) + \sum_{n=1}^N D_h(\chi^n-\chi^{n-1})
  \leq E_h(\chi^0) + \frac1{\sqrt{\pi}} \sum_{n=1}^N \int f(nh) \left(\chi^n- \chi^{n-1}\right) dx.
 \end{align}
 We handle the second right-hand side term by a discrete integration by parts,
 \begin{align*}
  \sum_{n=1}^N f(nh) \left(\chi^n- \chi^{n-1}\right)
  =f(Nh)\,\chi^N - f(0)\,\chi^0  -\sum_{n=1}^{N}  \left(f(nh)-f((n-1)h)\right) \chi^{n-1},
 \end{align*}
so that by Proposition \ref{force_tightness}
the right-hand side of (\ref{force_comparing_for_edestimate}) is estimated by
\begin{align*}
 E_0 + \frac1{\sqrt{\pi}}\|f\|_{\infty} \int \left( \chi^0 + \chi^N\right) dx 
 + \frac1{\sqrt{\pi}} \int_0^T \int \left|\partial_t f\right| dx\,dt
 \lesssim E_0 + \|f\|_{\infty} + \int_0^T \int \left|\partial_t f\right| dx\,dt,
\end{align*}
which concludes the proof.
\end{proof}
Now we can apply the same argument as in Section \ref{sec:vol} to prove the relative compactness of the approximate solutions.
\begin{prop}[Compactness]\label{force_compact}
Let $T<\infty$ and $\chi^0\in\{0,1\}$ with $E(\chi^0)<\infty$.
Then there exists a subsequence $h\searrow 0$ and a function $\chi\in \{0,1\}$ 
such that $\chi^h \to \chi$ in $L^1((0,T)\times \R^d)$ and the convergence holds almost everywhere in $(0,T)\times \R^d$.
\end{prop}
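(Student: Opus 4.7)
The plan is to argue exactly as in the proof of Proposition \ref{compactness}, substituting the a priori estimates by their forcing-term analogues. First I would establish the space-time equicontinuity
\begin{equation*}
\int_0^T\!\!\int\left|\chi^h(x+\delta e,t+\tau)-\chi^h(x,t)\right|dx\,dt\lesssim (1+T)\,C_f\,(\delta+\tau+\sqrt h),
\end{equation*}
with $C_f:=E_0+\|f\|_\infty+\int_0^T\!\int|\partial_t f|\,dx\,dt$, by following the proofs of Lemmas 2.4 and 2.5 in \cite{LauOtt15} verbatim. The spatial piece relies only on the pointwise energy bound $E_h(\chi^h(t))\lesssim C_f$, which is immediate from (\ref{force_ED-estimate}) since the energy is monotone up to the forcing contributions already absorbed in $C_f$. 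The temporal piece uses the telescoping $\chi^h(t+\tau)-\chi^h(t)=\sum_n(\chi^n-\chi^{n-1})$ together with the dissipation sum bound $\sum_n D_h(\chi^n-\chi^{n-1})\le C_f$ from (\ref{force_ED-estimate}), combined with the elementary inequality relating the $L^1$-norm of a difference of characteristic functions to $\sqrt{h}\,D_h$.

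Second, Proposition \ref{force_tightness} ensures that $\supp\chi^h(t)\subset B_{R^*}$ uniformly in $h$ and $t\in[0,T]$, where $R^*=R(1+CT\|f\|_\infty)$, so no mass can escape to infinity along the sequence. Combining this tightness with the above equicontinuity, an application of the Riesz--Kolmogorov compactness theorem, adapted to $\R^d$ exactly as in Proposition 2.1 of \cite{LauOtt15}, yields a subsequence $\chi^h\to\chi$ in $L^1((0,T)\times\R^d)$. Passing to a further diagonal subsequence we obtain a.\ e.\ convergence, and since each $\chi^h$ takes values in $\{0,1\}$, the same is true of $\chi$ pointwise almost everywhere.

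I do not expect any significant obstacle: the only genuine difficulty of the volume-preserving setting of Section \ref{sec:vol}, namely controlling the implicitly-defined threshold parameter via an $L^2$-estimate on Lagrange multipliers, is absent here because Algorithm \ref{MBO_force} prescribes the threshold value explicitly. Consequently the tightness Proposition \ref{force_tightness} is already proved by the direct half-space comparison in the spirit of Lemma \ref{lem_good_iteration}, and all relevant constants depend only on $T$, $E_0$, and the smoothness of $f$ through $C_f$. The only bookkeeping to watch is that the forcing contributions in (\ref{force_ED-estimate}) enter the equicontinuity constant additively rather than multiplicatively, which is harmless under the assumption $f\in C^\infty([0,T]\times\R^d)$.
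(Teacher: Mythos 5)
Your proposal is correct and follows essentially the same route as the paper, which simply invokes the argument of Proposition \ref{compactness} (equicontinuity in the spirit of Lemmas 2.4 and 2.5 of \cite{LauOtt15}, the Riesz--Kolmogorov adaptation of Proposition 2.1 there, and tightness) with the a priori bound \eqref{ED-estimate} replaced by \eqref{force_ED-estimate} and Proposition \ref{tightness} replaced by Proposition \ref{force_tightness}. Your additional bookkeeping of the constant $C_f$ and the observation that the explicit threshold removes the need for any Lagrange-multiplier estimate are both accurate.
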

\subsection{Convergence}
\begin{proof}[Proof of Theorem \ref{thm2}]
By Proposition \ref{force_compact} we have compactness. Our a priori estimate \eqref{force_ED-estimate} and the strengthened convergence \eqref{conv_ass} allow
us to proceed as in Step 1 of the proof of Theorem \ref{thm1} above to construct the normal velocity and establish the integrability.

As in Section \ref{sec:vol}, we can apply Proposition \ref{deltaE} because of our strengthened convergence \eqref{conv_ass} so that we recover the mean curvature from the first variation of the energy.
To prove the analogue of Proposition \ref{deltaD}, i.\ e.\ convergence of the first variation of the dissipation
towards $\int V \,\xi \cdot \nu \left| \nabla \chi \right|$ we use Lemma \ref{1d lemma+f} below to apply the proof in \cite{LauOtt15}.
This turns out to be easier compared to the proof in Section \ref{sec:vol} since there are no `bad' iterations and we do not have to take special care of the shift of the mesoscopic time slices as in Step 2.
\end{proof}

The following lemma is the analogue of Lemma \ref{1d lemma} but adapted to to the setting of this problem.
There are two major differences.
On the one hand, here the threshold values are not constant in space so that we obtain an extra term coming from the first right-hand side integral 
in (\ref{1dlem_proof}) which gives an error term measuring the spatial variation of $f$.
But on the other hand, the mild bound on the threshold value, $|\lambda-\tfrac12|< \tfrac18$ in Lemma \ref{1d lemma}, is here automatically satisfied if the time step
$h$ is small enough.

\begin{lem}\label{1d lemma+f}
Let $u,\, \tilde u, \, f, \, \tilde f \in C^\infty (\R)$,
$\chi= \chara_{u> \frac12 - \frac1{2\sqrt{\pi}}f\h}$, $\tilde \chi= \chara_{\tilde u> \frac12 - \frac1{2\sqrt{\pi}}\tilde f\h}$ and let furthermore $\eta\in C_0^\infty(-2r,2r)$ be a
radially non-increasing cut-off for $(-r,r)$ inside $(-2r,2r)$.
Then
\begin{align*}
\frac{1}{\sqrt h}\int \eta |\chi-\tilde \chi| 
\lesssim \frac{1}{\sqrt h}\int_{ \frac13\leq u \leq \frac23 }
\eta \left( \sqrt{h}\,\partial_1 u -\overline c \right)_-^2 &+ s
 +\frac1{s^2}  \frac{1}{\sqrt h}\int \eta \left(u-\tilde u \right)^2\\
& + \frac r{s^2}\h \big(f-\tilde f\big)^2 + \h^3 \int \eta \left(\partial_1 f \right)^2
\end{align*}
for any $ s\ll 1$ and $h \ll \frac1{\|f\|^2_\infty}$.
\end{lem}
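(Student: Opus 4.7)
My plan is to imitate the proof of Lemma~\ref{1d lemma} almost verbatim, shifting the arguments so that the threshold becomes the canonical value $\tfrac12$ and then appealing to Corollary 4.3 of \cite{LauOtt15}. Concretely, set
\[
v := u + \tfrac{\h}{2\sqrt\pi}\,f, \qquad \tilde v := \tilde u + \tfrac{\h}{2\sqrt\pi}\,\tilde f,
\]
so that $\chi = \chara_{v > 1/2}$ and $\tilde\chi = \chara_{\tilde v > 1/2}$. Since the shift now depends on the spatial variable, I will have to track the derivative of $f$, which is where the new error term $\h^3\int\eta(\partial_1 f)^2$ will originate.

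Applying Corollary 4.3 of \cite{LauOtt15} to $v$ and $\tilde v$ yields, for any $s\ll 1$,
\[
\frac{1}{\sqrt h}\int \eta\,|\chi-\tilde\chi|
\lesssim \frac{1}{\sqrt h}\int_{|v-\frac12|\leq s}\!\!\eta\,(\sqrt h\,\partial_1 v - \bar c)_-^2 + s + \frac{1}{s^2\sqrt h}\int \eta\,(v-\tilde v)^2.
\]
The rest of the proof is then a matter of resubstituting and estimating the error produced.

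For the domain of integration, the smallness assumption $h\ll 1/\|f\|_\infty^2$ guarantees that the perturbation $\tfrac{\h}{2\sqrt\pi}|f|$ is, say, $\leq \tfrac1{16}$ uniformly. Hence for $s$ small enough the set $\{|v-\tfrac12|\leq s\}$ is contained in $\{\tfrac13 \leq u \leq \tfrac23\}$, precisely as required. For the monotonicity term, expand $\sqrt h\,\partial_1 v = \sqrt h\,\partial_1 u + \tfrac{h}{2\sqrt\pi}\,\partial_1 f$ and apply $(a+b)_-^2 \lesssim a_-^2 + b^2$; after dividing by $\sqrt h$ the second contribution yields exactly $\h^3\int \eta\,(\partial_1 f)^2$.

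For the $L^2$-distance term, Young's inequality gives $(v-\tilde v)^2 \lesssim (u-\tilde u)^2 + h(f-\tilde f)^2$. The first summand reproduces the corresponding term in the claim, while for the second we use that $\eta$ is supported in $(-2r,2r)$ and estimate $(f-\tilde f)^2$ pointwise by its supremum over this window, so that after integration we obtain $\tfrac{r}{s^2}\h\,(f-\tilde f)^2$. Assembling these three estimates yields the claim. I do not foresee a genuine obstacle here: the only point requiring some care is to choose the threshold between $s$ and $\h\|f\|_\infty$ so that the domain inclusion $\{|v-\tfrac12|\leq s\}\subset \{\tfrac13 \leq u\leq \tfrac23\}$ holds unconditionally, and then to keep the constants clean enough for the stated form of the error terms to match.
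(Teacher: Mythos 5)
Your proposal is correct and follows exactly the route the paper intends: the paper states Lemma \ref{1d lemma+f} without a written proof, but the preceding remark describes precisely this argument --- the shift $v = u + \tfrac{\h}{2\sqrt\pi}f$ reducing to Corollary 4.3 of \cite{LauOtt15}, the extra term $\h^3\int\eta(\partial_1 f)^2$ arising from the spatial variation of the threshold in the monotonicity integral, and the smallness condition $h \ll \|f\|_\infty^{-2}$ replacing the bound $|\lambda - \tfrac12| < \tfrac18$ of Lemma \ref{1d lemma}. Your bookkeeping of the three error terms (domain inclusion, $(a+b)_-^2 \lesssim a_-^2 + b^2$, and Young's inequality plus the $\lesssim r\sup(f-\tilde f)^2$ bound on the support of $\eta$) is accurate.
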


We conclude this section with a short remark on the necessary regularity of $f$.  In Theorem \ref{thm2} we assumed $f \in C^{\infty}\left( [0,T] \times \mathbb{R}^d \right)$.  
However this regularity assumption can be weakened.  Indeed, our proof of Theorem \ref{thm2} only used $f\in L^\infty$, $\partial_t f\in L^1$ and $\nabla f \in L^2$.
%
%
%
%
%

\section{Grain growth in polycrystals}\label{sec:crystal}
In this section we present and study a thresholding algorithm for simulating grain growth in polycrystals including boundary effects.
Especially for thin films this is very important since then these effects become more important.
\subsection{Preliminaries}
The energy that we are interested in is the following weighted sum of interfacial energies
\begin{align}\label{graingrowth_Energy}
 E(\Omega_1,\dots,\Omega_\numphases) =  \sum_{i,j} \sigma_{ij} \left| \Sigma_{ij} \right| + 2\sigma_0 \left| \Sigma_{0}\right|,
\end{align}
where the phases $\Omega_1,\dots,\Omega_\numphases$ represent the different grains and are assumed to be closed, intersect only through their boundaries and
\begin{align*}
 \Sigma_{ij} := \partial \Omega_i \cap \partial \Omega_j,\quad \Sigma_0 := \partial \left(\Omega_1 \cup \dots \cup  \Omega_\numphases \right).
\end{align*}
The number $\sigma_{ij}$ is the surface tension between Phase $i$ and Phase $j$ and $\sigma_0$ the surface tension between the crystal and the air which is an additional modeling parameter.
The equation we want to study is the gradient flow of the energy (\ref{graingrowth_Energy}) subject to the volume constraint
\begin{align*}
 \left|\Omega_1 \cup \dots \cup  \Omega_\numphases  \right| = \text{constant}.
\end{align*}
In particular we analyze a thresholding algorithm (Algorithm \ref{MBO_graingrowth}) and in Theorem \ref{thm3} we prove a (conditional) convergence result 
for a very general class of surface tensions that has been introduced in \cite{EseOtt14}.
Esedo\u{g}lu and Otto showed that this class includes the 2-d and 3-d Read-Shockley formulas which are very prominent models for grain boundaries with a small mismatch in the angle.
As in \cite{LauOtt15}, we need slightly stronger assumptions for the convergence proof.
We ask the matrix $\sigma=(\sigma_{ij})_{ij=1}^\numphases$  of surface tensions to satisfy
\begin{align}\label{sigma>0}
 \sigma_{ii} = 0,\quad
 \sigma_{ji} = \sigma_{ij} >0 \text{ for all } i\neq j
\end{align}
and furthermore the following triangle inequality
\begin{align}\label{triangle_surf_tens}
 \sigma_{ij} < \sigma_{ik} + \sigma_{kj} \quad \text{for all pairwise different }
  i, j, k.
\end{align}
For the dynamics, it is natural to assume that there exists a positive constant $\underline \sigma>0$ such that
\begin{align}\label{sigma<0}
 \sigma \leq -\underline \sigma <0 \quad \text{on } (1,\dots,1)^\perp
\end{align}
as a bilinear form.
Given a matrix of surface tension $\sigma$, the only modeling assumption on the parameter $\sigma_0$, the surface tension between the crystal and the air, is the
the lower bound
\begin{align}\label{sigma>12sigmamax}
\sigma_0 > \frac12\max_{i,j}\sigma_{ij}.
 \end{align}
In the following, we will normalize this parameter $\sigma_0=1$ by rescaling the other surface tensions $\sigma_{ij} \mapsto \frac{\sigma_{ij}}{\sigma_0}$ so that this modeling assumption turns
into an additional assumption on the matrix of (normalized) surface tensions between the grains:
\begin{align}\label{sigma<2}
 \sigma_{ij} < 2 \quad \text{for all } i,j.
 \end{align}
Note that given this additional assumption, the extended matrix of surface tensions given by the $(\numphases+1)\times (\numphases +1 )$-block matrix
\begin{align}\label{extendsigma}
  \begin{pmatrix}
 0		&  1  	&\cdots &  1 \\
 1 		&       &  	&  \\
 \vdots    	&	&\sigma &    \\
   1  		&       &	&    
 \end{pmatrix}
\end{align}
satisfies all the assumptions mentioned before and in particular (\ref{sigma<0}) with $\underline \sigma$
replaced by $\underline \sigma \wedge 2$.
The resulting equation then becomes
\begin{align}\label{graingrowth_pde1}
 V_{ij}= H_{ij}
\end{align}
on the smooth part of the interface $\Sigma_{ij}$, ($i,j\geq 1$) and
\begin{align}\label{graingrowth_pde2}
 \sigma_{ij}\nu_{ij}(p)+ \sigma_{jk}\nu_{jk}(p) + \sigma_{ki}\nu_{ki}(p) = 0,
\end{align}
whenever $p$ is a triple junction between the phases $i,\,j$ and $k$, and
\begin{align}\label{graingrowth_pde3}
 V_{0} = H_{0} - \langle H_0 \rangle
\end{align}
on the smooth part of the outer boundary $\Sigma_0$.

\medskip

Esedo\u{g}lu and Otto showed in \cite{EseOtt14} that - up to a constant - the energy $E$ in (\ref{graingrowth_Energy}) can be approximated by
\begin{align}\label{graingrowth_Eh}
 E_h(\chi) := \frac1\h\sum_{i,j\geq 1} \sigma_{ij} \int \chi_i\,G_h\ast \chi_j\,dx + \frac2\h \int \left(1-\chi_0 \right)G_h\ast \chi_0\,dx
\end{align}
for \emph{admissible} $\chi$, i.\ e.
\begin{align}\label{admissible}
 \chi= \left(\chi_0,\chi_1,\dots,\chi_\numphases \right)\colon \R^d \to \{0,1\}^{\numphases+1},\quad\textup{s.\ t. } \sum_{i=1}^\numphases \chi_i = 1-\chi_0.
\end{align}
Indeed, they proved that the functionals $E_h$ $\Gamma$-converge to $\frac{1}{\sqrt{\pi}}E$ as $h\to0$ when identifying the sets $\Omega_i$ with their characteristic functions
$\chi_i = \chara_{\Omega_i}$ and defining the area of the interface $\Sigma_{ij}$ between Phases $i$ and $j$ via the term
$
 \int\frac12\left(\left| \nabla \chi_i\right|+\left| \nabla \chi_i\right| - \left| \nabla \chi_i+\chi_j\right| \right)
$
so that the energy $E$ then becomes
\begin{align*}
 E(\chi) = \frac1{\sqrt{\pi}} \sum_{i,j\geq 1} \sigma_{ij}\int\frac12\left(\left| \nabla \chi_i\right|+\left| \nabla \chi_i\right| - \left| \nabla \chi_i+\chi_j\right| \right)
 + \frac{2}{\sqrt{\pi}}\int\left| \nabla \chi_0\right|.
\end{align*}

In the following we will w.\ l.\ o.\ g. assume that the total volume of the crystal is normalized to $1$, i.\ e.
\begin{align*}
 \left|\Omega_1 \cup \dots \cup  \Omega_\numphases  \right| = 1.
\end{align*}
  
\subsection{Algorithm and notation}
The following algorithm was proposed in \cite{An15} to model grain growth in thin polycrystals. Similar to Algorithm \ref{MBO_volume},
here the total volume of the polycrystal is preserved by the right choice of the threshold value.
\begin{algo}\label{MBO_graingrowth}
Given the phases $\Omega_1,\dots, \Omega_\numphases$ with total volume $1$ at time $t=(n-1)h$ and write
$\Omega_0 := \R^{d} \setminus \left( \Omega_1 \cup \dots \cup  \Omega_\numphases\right)$, 
obtain the evolved phases $\Omega_1',\dots, \Omega_\numphases'$ at time $t=nh$ by:
\begin{enumerate}
\vspace{-3pt}
 \item Convolution step:
\begin{align*}
  \phi_0 := G_h \ast  \Bigg( \sum_{j\geq 1} \chara_{\Omega_j} \Bigg),\quad \phi_i := G_h\ast \Bigg( \sum_{j\geq 1} \sigma_{ij}\chara_{\Omega_j} 
  +  \chara_{\Omega_0} \Bigg), \quad i \geq 1.
\end{align*}
\item Defining threshold value:
 Find $\lambda$ such that
\begin{align*}
 \left| \bigcup_{i \geq 1} \left\{  \phi_i < \phi_0 + \lambda 
 \right\} \right|
= 1.
\end{align*}
\item Thresholding step:
For $i=1,\dots, \numphases$ set 
\begin{align*}
     \Omega_i' := \{\phi_i < \phi_j \text{ for all } j\neq i, \, j\geq 1 \} \cap
     \{\phi_i < \phi_0 + \lambda \}
    \end{align*}
and $\Omega_0' := \R^d \setminus (\Omega'_1 \cup \dots \cup \Omega'_\numphases)$.
\end{enumerate}
\end{algo}

%
%
%

\subsection{Minimizing movements interpretation}

%
%
With a similar argument as before, using the linearized energy
\begin{align}\label{graingrowth_def_L}
 L_h(\phi,\chi) := \frac2\h \sum_{i=0}^\numphases \int \chi_i\, \phi_i \, dx,
\end{align}
we can interpret Algorithm \ref{MBO_graingrowth} as a minimizing movements scheme for the approximate energies $E_h$ defined in (\ref{graingrowth_Eh})
and dissipation $-E_h(\omega)$. 
Here the matrix of surface tensions $\sigma_{ij}$ is extended as in (\ref{extendsigma}). 
\begin{lem}[Minimizing movements interpretation]\label{graingrowth_la_MM}
 Given any admissible $\chi^0$, let $\phi,$ $\lambda$ and $\chi^1$ be obtained by Algorithm \ref{MBO_graingrowth}.
 Then $\chi^1$ solves
\begin{align}\label{graingrowth_MM_lagrange}
 \min \quad  E_h( \chi) - E_h(\chi-\chi^0) 
- \frac{2\lambda}{\h} \int (1-\chi_0) \,dx,
\end{align}
where the minimum runs over (\ref{admissible}).
Or equivalently,
\begin{align}\label{graingrowth_MM_constraint}
 \min \quad  E_h( \chi) - E_h(\chi-\chi^0) \quad \textup{s.\ t.}
\quad  \int (1-\chi_0) \,dx= 1,
\end{align}
where the minimum runs over (\ref{admissible}) and is additionally constrained by the volume constraint.
\end{lem}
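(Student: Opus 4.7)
The plan is to mirror the proof of Lemma \ref{la_MM}: reduce the Lagrangian problem (\ref{graingrowth_MM_lagrange}) to a minimization of the linearized energy $L_h(\phi,\chi)$ from (\ref{graingrowth_def_L}), observe that the linearized problem decouples into a pointwise assignment, and verify that this assignment rule coincides with Step 3 of Algorithm \ref{MBO_graingrowth}. The starting point is to recognize $E_h$ as a symmetric bilinear form on admissible configurations. Writing $\tilde\sigma_{ij}$ for the entries of the extended matrix in (\ref{extendsigma}) and using the admissibility relation $1-\chi_0 = \sum_{i\geq 1}\chi_i$ to recast the bulk term of (\ref{graingrowth_Eh}), one checks that
\[
E_h(\chi) = \frac1{\h}\sum_{i,j=0}^\numphases \tilde\sigma_{ij}\int \chi_i\,G_h\ast\chi_j\,dx.
\]
Extending $E_h$ by the same quadratic form to arbitrary $\R^{\numphases+1}$-valued maps $\omega$ (just as $D_h$ is extended in Section \ref{sec:vol}), I would then expand
\[
E_h(\chi) - E_h(\chi-\chi^0) = \frac{2}{\h}\sum_{i,j=0}^\numphases \tilde\sigma_{ij}\int\chi_i\,G_h\ast\chi_j^0\,dx - E_h(\chi^0) = L_h(\phi,\chi) - E_h(\chi^0),
\]
where the second equality is essentially the definition of $\phi_i$ in Step 1 of the algorithm, using $\tilde\sigma_{00}=0$ and $\tilde\sigma_{0i}=\tilde\sigma_{i0}=1$ for $i\geq 1$.

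Plugging this identity into (\ref{graingrowth_MM_lagrange}) and absorbing the constant $-E_h(\chi^0)$, the problem reduces to
\[
\min_{\chi\text{ admissible}}\ \frac{2}{\h}\int\Big(\chi_0\phi_0 + \sum_{i\geq 1}\chi_i(\phi_i-\lambda)\Big)\,dx.
\]
Admissibility forces exactly one component $\chi_i(x)$ to equal $1$ at each point $x$, so the integrand is minimized pointwise by assigning $x$ to the index achieving $\min\{\phi_0(x),\phi_1(x)-\lambda,\dots,\phi_\numphases(x)-\lambda\}$. Unwinding this selection rule yields exactly the formula for $\Omega_i'$ in Step 3 of Algorithm \ref{MBO_graingrowth}; the measure-zero set on which ties occur does not affect the value of the functional, so $\chi^1$ is a minimizer.

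For the equivalence of (\ref{graingrowth_MM_lagrange}) and (\ref{graingrowth_MM_constraint}), the argument is the same as in Lemma \ref{la_MM}: under the volume constraint $\int(1-\chi_0)\,dx=1$ the $\lambda$-term is constant, so minimizing the Lagrangian and minimizing $E_h(\chi)-E_h(\chi-\chi^0)$ are equivalent, and Step 2 of the algorithm chooses $\lambda$ precisely so that the unconstrained pointwise minimizer $\chi^1$ automatically satisfies the constraint. Hence the same $\chi^1$ solves both problems. The only real point of care is the initial bilinear rewriting of $E_h$, since the bulk contribution $\frac{2}{\h}\int(1-\chi_0)G_h\ast\chi_0\,dx$ is not \emph{a priori} a quadratic form in $(\chi_0,\dots,\chi_\numphases)$; once that step is in place, the rest is just the pointwise-selection argument familiar from Lemma \ref{la_MM}.
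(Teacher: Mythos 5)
Your proposal is correct and follows essentially the same route as the paper's proof: rewrite $E_h(\chi)-E_h(\chi-\chi^0)$ via the quadratic structure of $E_h$ (with the extended surface-tension matrix) as the linearized energy $L_h(\phi,\chi)$ up to a constant depending only on $\chi^0$, and then observe that admissibility reduces the problem to a pointwise selection of the smallest of $\phi_0+\lambda,\phi_1,\dots,\phi_\numphases$, which is exactly Step 3 of Algorithm \ref{MBO_graingrowth}; the equivalence of the Lagrangian and constrained formulations follows as in Lemma \ref{la_MM}. Your write-up in fact spells out the bilinear rewriting of the bulk term more explicitly than the paper, which simply invokes ``the quadratic nature of $E_h$''.
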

\begin{proof}
 Indeed, for any admissible $\chi$ in the sense of (\ref{admissible}) we have
 \begin{align*}
    \sum_{i=0}^\numphases \chi_i\,\phi_i - \lambda\left( 1-\chi_0\right) 
    = \chi_0 \left( \phi_0+\lambda\right) + \sum_{i=1}^\numphases \chi_i\,\phi_i - \lambda 
    \overset{(\ref{admissible})}{\geq} \min\left\{ \phi_0+\lambda, \phi_1,\dots,\phi_\numphases\right\} - \lambda.
 \end{align*}
For $\chi^1$ obtained by Algorithm \ref{MBO_graingrowth} in turn we have equality in the above inequality so that $\chi^1$ minimizes the left-hand side pointwise.
In particular, after integration we see that $\chi^1$ minimizes the functional
\begin{align*}
 \frac2\h \sum_{i=0}^\numphases \int \chi_i\, \phi_i \, dx  - \frac{2\lambda}\h \int \left( 1-\chi_0\right)dx = L_h(\chi,\phi) - \frac{2\lambda}\h \int \left( 1-\chi_0\right)dx.
\end{align*}
By the quadratic nature of the functional $E_h$ we have
\begin{align*}
  L_h(\phi,\chi) = E_h( \chi) - E_h(\chi-\chi_0) + \text{Terms depending only on } \chi^0,
\end{align*}
which proves the first claim (\ref{graingrowth_MM_lagrange}).
Since the last term in (\ref{graingrowth_MM_lagrange}) is constant for $\chi$ with the volume constraint, we also have (\ref{graingrowth_MM_constraint}).
 \end{proof}
Again, as a direct consequence of the minimizing movements interpretation, we obtain an a priori estimate by comparing the solution to its predecessor.
\begin{lem}[Energy-dissipation estimate]
The approximate solutions $\chi^h$ satisfy
\begin{align}\label{graingrowth_ED-estimate}
E_h(\chi^N) - \sum_{n=1}^N E_h(\chi^n-\chi^{n-1}) \leq E_0.
\end{align}
\end{lem}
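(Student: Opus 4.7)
The plan is to derive \eqref{graingrowth_ED-estimate} directly from the minimizing movements characterization in Lemma \ref{graingrowth_la_MM}, by comparing $\chi^n$ with its predecessor $\chi^{n-1}$ and telescoping. This is the same pattern as for the energy-dissipation estimate \eqref{ED-estimate} in Section \ref{sec:vol}, and the argument is short precisely because the hard structural work, namely identifying Algorithm \ref{MBO_graingrowth} as the minimizer of the constrained functional \eqref{graingrowth_MM_constraint}, has already been done.

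First I would verify that $\chi^{n-1}$ is a legitimate competitor for the minimization problem \eqref{graingrowth_MM_constraint} at the $n$-th step. This is immediate: $\chi^{n-1}$ is admissible in the sense of \eqref{admissible}, and the polycrystal volume $\int(1-\chi_0^{n-1})\,dx = 1$ is preserved at each step by the choice of threshold value $\lambda$ in Algorithm \ref{MBO_graingrowth}.

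Next I would test the minimization with $\chi = \chi^{n-1}$ and note that $E_h(0) = 0$ by the definition \eqref{graingrowth_Eh}. This yields
\begin{align*}
E_h(\chi^n) - E_h(\chi^n - \chi^{n-1}) \leq E_h(\chi^{n-1}).
\end{align*}
Summing from $n=1$ to $N$, the terms $E_h(\chi^n)$ and $E_h(\chi^{n-1})$ telescope and leave $E_h(\chi^N) - \sum_{n=1}^N E_h(\chi^n - \chi^{n-1}) \leq E_h(\chi^0)$. The proof is then concluded by the multi-phase analogue of \eqref{Eh<E}, namely $E_h(\chi^0) \leq E(\chi^0) = E_0$, which follows from the pointwise convergence of $E_h$ combined with its approximate monotonicity.

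There is no genuine obstacle here, but the conceptual point worth flagging is that the ``dissipation'' is $-E_h(\chi^n - \chi^{n-1})$ rather than a manifestly positive quantity as in Section \ref{sec:vol}, and one needs this to be nonnegative for the estimate to be a useful a priori bound. This is exactly where the spectral condition \eqref{sigma<0}, transported to the extended matrix \eqref{extendsigma}, enters: every admissible difference $\chi^n - \chi^{n-1}$ takes values pointwise in $(1,\ldots,1)^\perp$, and combined with the positivity of the Fourier transform of $G_h$ this forces $E_h(\chi^n - \chi^{n-1}) \leq 0$. I would not need to invoke this positivity to write down the inequality, but it is what turns the resulting estimate into a meaningful compactness input later in the convergence proof.
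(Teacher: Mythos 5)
Your proof is correct and is exactly the argument the paper intends: compare $\chi^n$ to the admissible competitor $\chi^{n-1}$ in the constrained minimization \eqref{graingrowth_MM_constraint}, use $E_h(0)=0$, telescope, and conclude with the multi-phase analogue of \eqref{Eh<E}. Your closing remark about \eqref{sigma<0} making $-E_h$ nonnegative on the process space matches the paper's own comment following the lemma.
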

Note that our assumption \eqref{sigma<0} guarantees that $\sqrt{-E_h}$ defines a norm on the process space $\{\omega \colon \sum_i \omega_i=0\}$ in the same spirit as 
$\sqrt{D_h}$ in the previous two sections.
%
%
\begin{defn}[First variation]
 For any admissible $\chi\in\{0,1\}^{\numphases}$ and $\xi \in C_0^\infty(D,\R^d)$ let $\chi_s$ be generated by the flow of $\xi$, i.\ e. $\chi_{i,s}$
 solves the following distributional equation:
\begin{align*}
 \partial_s \chi_{i,s} + \xi \cdot \nabla \chi_{i,s} =0.
\end{align*}
We denote the first variation along this flow by
\begin{align*}
 \delta E_h (\chi,\xi):= \frac{d}{ds} E_h(\chi_s)\big|_{s=0},\quad 
\delta E_h (\,\cdot\, - \tilde \chi)(\chi,\xi):= \frac{d}{ds} E_h(\chi_s-\tilde \chi)\big|_{s=0},
\end{align*}
where $\tilde \chi\in \{0,1\}$ is fixed.
\end{defn}
%
%
\begin{cor}[Euler-Lagrange equation]\label{cor_ELG_graingrowth}
Given an admissible $\chi^0\in\{0,1\}^{\numphases}$, let $\chi^1$ be obtained by Algorithm \ref{MBO_graingrowth} with threshold value $\lambda$.
Then $\chi^1$ solves the Euler-Lagrange equation
\begin{align}\label{ELG_graingrowth}
 \delta E_h (\chi^1,\xi)-\delta E_h (\,\cdot\, - \chi^0)(\chi^1,\xi) 
-\frac{2\lambda}{\h}\int\left(\nabla\cdot \xi\right) \left(1-\chi^1_0\right)dx =0.
\end{align}
\end{cor}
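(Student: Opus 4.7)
The plan is to exploit the minimizing movements interpretation of Lemma \ref{graingrowth_la_MM}, specifically the unconstrained Lagrangian formulation \eqref{graingrowth_MM_lagrange}, and obtain \eqref{ELG_graingrowth} as the stationarity condition along inner variations generated by the flow of $\xi$. Since inner variations correspond to diffeomorphisms of space, they preserve both the characteristic-function property and the admissibility constraint \eqref{admissible}. Indeed, if $\chi^1_s$ is generated by the transport equation $\partial_s\chi^1_{i,s} + \xi\cdot\nabla\chi^1_{i,s} = 0$ componentwise, then $\chi^1_{i,s} = \chi^1_i \circ \Phi_s^{-1}$ for the flow $\Phi_s$ of $\xi$, so each $\chi^1_{i,s}\in\{0,1\}$ and $\sum_{i=1}^\numphases\chi^1_{i,s} = (1-\chi^1_0)\circ\Phi_s^{-1} = 1-\chi^1_{0,s}$. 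Hence $\chi^1_s$ is an admissible competitor against which we may test the minimality of $\chi^1$.

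Next I would differentiate the Lagrangian \eqref{graingrowth_MM_lagrange} along $s\mapsto \chi^1_s$ at $s=0$. The first two terms produce $\delta E_h(\chi^1,\xi)$ and $-\delta E_h(\,\cdot\,-\chi^0)(\chi^1,\xi)$ by the very definition of the first variation. For the volume term I would use the change-of-variables formula, which gives
\begin{equation*}
 \frac{d}{ds}\bigg|_{s=0}\int \chi^1_{0,s}\,dx = \int \chi^1_0\,(\nabla\cdot\xi)\,dx,
\end{equation*}
so that
\begin{equation*}
 \frac{d}{ds}\bigg|_{s=0}\left[-\frac{2\lambda}{\h}\int (1-\chi^1_{0,s})\,dx\right] = \frac{2\lambda}{\h}\int \chi^1_0\,(\nabla\cdot\xi)\,dx.
\end{equation*}
Combining these three contributions and invoking the first-order optimality condition $\frac{d}{ds}|_{s=0}=0$ yields
\begin{equation*}
 \delta E_h(\chi^1,\xi) - \delta E_h(\,\cdot\,-\chi^0)(\chi^1,\xi) + \frac{2\lambda}{\h}\int \chi^1_0\,(\nabla\cdot\xi)\,dx = 0.
\end{equation*}

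Finally, since $\xi\in C_0^\infty(\R^d,\R^d)$ has compact support, $\int (\nabla\cdot\xi)\,dx = 0$, so $\int \chi^1_0(\nabla\cdot\xi)\,dx = -\int (1-\chi^1_0)(\nabla\cdot\xi)\,dx$, and the displayed identity rearranges into \eqref{ELG_graingrowth}. The argument is essentially the same as the one behind Corollary \ref{cor_ELG}, adapted to the multi-phase setting; the only point that requires a moment of care is the preservation of admissibility under inner variations, and this is automatic because the flow acts diagonally on the components of $\chi$.
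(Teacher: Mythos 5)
Your proposal is correct and follows exactly the route the paper intends (and leaves implicit): differentiate the Lagrangian of Lemma \ref{graingrowth_la_MM} along inner variations, noting that the flow preserves admissibility, with the first two terms giving the first variations by definition and the volume term handled by the change-of-variables formula. The only cosmetic caveat is that $\int \chi^1_{0,s}\,dx$ is infinite, so the derivative there should be read as that of the finite quantity $\int(1-\chi^1_{0,s})\,dx$ (or of $\int(\chi^1_{0,s}-\chi^1_0)\,dx$), which is what your final rearrangement actually uses.
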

The idea underlying the convergence proof now follows the framework laid out in Section \ref{sec:vol}.  
The first variation of the approximate energy will be shown to converge 
to the mean curvature of the crystal/grain boundary in a weak sense.  
The first variation of the dissipation will be shown to converge to the
velocity in a weak sense.  And the first variation of the Lagrange multiplier term will converge to zero on the inner
grain boundaries, and the average of the mean curvature over the outer solid-vapor interface.  
The precise limit is formulated in the next definition.

The following definition is similar to the notion for multi-phase mean-curvature flow as described in \cite{LauOtt15} but incorporates
an additional constraint on the total volume. 
\begin{defn}\label{def_graingrowth}
Fix some finite time horizon $T<\infty$, a matrix of surface tensions $\sigma$ as above and initial data 
$\chi^0\colon \R^d \to \{0,1\}^{\numphases}$ with $E_0 := E(\chi^0) <\infty$.
We say that
\begin{align*}
\chi = \left(\chi_1,\dots,\chi_\numphases\right): (0,T)\times \R^d 
\to \{0,1\}^{\numphases}
\end{align*}
with $\chi_0 := 1- \sum_i \chi_i \in \{0,1\}$ a.\ e. and $\chi(t)\in BV(\R^d,\{0,1\}^{\numphases})$ for a.\ e. $t$
moves by \emph{total-volume preserving mean-curvature flow}
if 
\begin{align}\label{graingrowth_H=v}
\sum_{i,j =1}^\numphases \sigma_{ij} &\int_0^T \int \left(\nabla\cdot\xi - \nu_i \cdot \nabla \xi \,\nu_i
- \xi \cdot \nu_i\, V_i \right) 
\left(\left|\nabla\chi_i\right| + \left|\nabla\chi_j\right| - \left|\nabla(\chi_i+\chi_j)\right|\right)  dt \notag\\
+ 2&\int_0^T \int \left(\nabla\cdot\xi - \nu_0 \cdot \nabla \xi \,\nu_0
- \xi \cdot \nu_0 \left( V_0 + \Lambda\right) \right) \left| \nabla \chi_0\right|  dt
= 0
\end{align}
for all $\xi\in C^\infty_0((0,T)\times \R^d, \R^d)$, where the functions $V_i\colon (0,T)\times\R^d\to \R$ are normal velocities in the sense that
\begin{align}\label{graingrowth_v=dtX}
\int_0^T \int \partial_t \zeta \, \chi_i\, dx\,dt + \int \zeta(0) \chi_i^0 \,dx
= - \int_0^T \int \zeta\, V_i\left| \nabla \chi_i\right|dt
\end{align}
for all $\zeta\in C^\infty([0,T]\times \R^d)$ with $\zeta(T)=0$ and $\supp \zeta(t) \subset\subset \R^d$ and 
all $i\in \left\{0,1,\dots,\numphases\right\}$ and if the Lagrange multiplier $\Lambda\colon (0,T)\to \R$ is such that
the volume of the solid phase $(1-\chi^0)$ is preserved:
\begin{align}\label{graingrowth_volume_preserved}
 \sum_{i=1}^\numphases \int \chi_i(t) \,dx = \text{constant}.
\end{align}
\end{defn}
 
\begin{rem}
 We assume the following convergence of the energies defined in (\ref{graingrowth_Eh}).
\begin{align}\label{conv_ass_graingrowth}
 \int_0^T E_h(\chi^h)\,dt \to \int_0^T E(\chi)\,dt.
\end{align}
\end{rem}

\subsection{Main result} 
\begin{thm}\label{thm3}
 Let $T<\infty$ be a finite time horizon, $\chi^0= (\chi_1^0,\dots, \chi_\numphases^0)$ be admissible initial data with $E(\chi^0)<\infty$ and
 $\{\sum_i\chi_i^0=1\} \subset \subset \R^d$ and let the matrix of surface tensions $\sigma$ satisfy the assumptions (\ref{sigma>0})-(\ref{sigma>12sigmamax}).
 After passage to a subsequence, the approximate solutions $\chi^h$ constructed in Algorithm \ref{MBO_graingrowth} converge to an admissible $\chi$ in $L^1((0,T)\times \R^d)$.
 Given the convergence assumption (\ref{conv_ass_graingrowth}), $\chi$ moves by total-volume preserving mean-curvature flow according to Definition \ref{def_graingrowth}.
\end{thm}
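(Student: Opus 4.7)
The plan is to follow the three-step blueprint of Theorem \ref{thm1} (compactness, Euler--Lagrange equation, passage to the limit), grafted onto the multiphase framework of \cite{LauOtt15}. The starting point is the minimizing movements interpretation (Lemma \ref{graingrowth_la_MM}) and the Euler--Lagrange equation (\ref{ELG_graingrowth}). After integration in time and passage to a subsequence, each of the three terms should converge: $\delta E_h(\chi^h,\xi)$ will yield the sum of mean-curvature terms associated with the inner interfaces $\Sigma_{ij}$ and the outer interface $\Sigma_0$; the dissipation variation $\delta E_h(\,\cdot\,-\chi^{h}(t-h))(\chi^h,\xi)$ will yield the velocity terms; and the Lagrange multiplier term $\frac{2\lambda_h}{\sqrt h}\int(\nabla\cdot\xi)(1-\chi_0^h)\,dx$ will yield $-\frac{2}{\sqrt\pi}\int \Lambda\,\xi\cdot\nu_0\,|\nabla\chi_0|$, producing the forcing $\Lambda$ only on the outer boundary, as desired.

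The first technical core is the analogue of Proposition \ref{L2 bounds lambda}: we need $\int_0^T \lambda_h^2\,dt \lesssim h$. The argument mirrors Proposition \ref{L2 bounds lambda} closely. After squaring (\ref{ELG_graingrowth}), the estimates of Steps 1 and 2 of that proof generalize: the upper bound on $\delta E_h(\chi,\xi)$ reduces to each pairwise term $\sigma_{ij}\int \chi_i\,G_h\ast\chi_j$ (plus the outer contribution $\int(1-\chi_0)G_h\ast\chi_0$) via the same commutator trick, and the upper bound on $\delta E_h(\,\cdot\,-\chi^{n-1})(\chi^n,\xi)$ requires the coercivity $\sigma \leq -\underline\sigma$ on $(1,\dots,1)^\perp$, which guarantees that $-E_h(\omega)$ behaves like a squared norm on processes (Definition 2.7 in \cite{LauOtt15}). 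For the lower bound on $\int(\nabla\cdot\xi)(1-\chi_0^n)\,dx$, Step~3 of Proposition \ref{L2 bounds lambda} applies verbatim with $\chi$ replaced by $1-\chi_0^n$, since that quantity satisfies the same hypotheses $\int(1-\chi_0^n)\,dx=1$ and $E_h(\chi^n)\leq E_0$.

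For compactness and tightness, the split into ``good'' iterations ($|\lambda_n|<\tfrac14$) and ``bad'' iterations is again the right framework: Chebyshev's inequality bounds the number of bad iterations via the $L^2$-estimate. On good iterations, the growth $R_n\leq R_{n-1}+C\sqrt h|\lambda_n|$ is proved by comparing $\phi_i-\phi_0$ to its values for a half-space configuration in a fixed direction, exploiting that the factorization property of $G_h$ and the assumption (\ref{sigma<2}) keep the inner-phase indicators from affecting the outer threshold inequality $\phi_i<\phi_0+\lambda$ far from the support. On bad iterations the naive bound $R_n\leq 3R_{n-1}$ from comparison of linearized energies applies in exactly the same way as Lemma \ref{lem_bad_iteration}. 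The Riesz--Kolmogorov compactness then follows from an $L^1$ equicontinuity estimate in space and time for each $\chi_i^h$, obtained as in \cite{LauOtt15} from the energy-dissipation estimate (\ref{graingrowth_ED-estimate}).

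The hardest step, and the one I would expect to be the main obstacle, is the analogue of Proposition \ref{deltaD} in the multiphase setting. It will proceed via the mesoscopic time-scale $\alpha\sqrt h$ argument of \cite{LauOtt15}, but the grid-shift must now avoid three bad sets simultaneously: the energy-excess large set, the $\lambda^2$ large set, and the pointwise large $|\lambda_n|$ set. A counting argument analogous to the one in Step~2 of the proof of Proposition \ref{deltaD} works once the $L^2$-estimate from the first step is available. The one-dimensional lemma (Lemma \ref{1d lemma}) must then be used on the pair $(\phi_i-\phi_0,\phi_i-\phi_j)$ only on the outer interface where the threshold is perturbed; on the inner interfaces the threshold is the unmodified competitor-based rule and the argument reduces to that of \cite{LauOtt15}. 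Once the velocities $V_i$ are constructed and satisfy (\ref{graingrowth_v=dtX}), the identification $\Lambda=\langle H_0\rangle$ follows by testing the weak formulation (\ref{graingrowth_H=v}) with a $\xi$ approximating $\nu_0$ localized near $\Sigma_0$ and combining with the total-volume preservation $\frac{d}{dt}\sum_i\int\chi_i\,dx=0$, exactly as in the closing paragraph of the proof of Theorem \ref{thm1}.
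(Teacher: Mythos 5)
Your proposal follows essentially the same route as the paper: minimizing movements interpretation and Euler--Lagrange equation, the $L^2$-estimate on $\lambda_h$ obtained by rerunning the three steps of Proposition \ref{L2 bounds lambda} with $1-\chi_0^n$ in place of $\chi^n$, tightness via the good/bad iteration dichotomy, and convergence of the first variations via the mesoscopic time-slice shift avoiding the three bad sets. The only point where the paper's execution differs from your sketch is the key one-dimensional estimate: rather than applying the two-phase Lemma \ref{1d lemma} pairwise and splitting inner from outer interfaces, the paper proves a single multiphase generalization (Lemma \ref{graingrowth_1dlemma}) in which the vapor phase is absorbed into the extended surface-tension matrix (\ref{extendsigma}) and the Lagrange multiplier enters as a vector shift $\phi=\sigma u+\lambda$, which handles the coupling between the competitor-based rule and the perturbed outer threshold uniformly via the decomposition into the sets $E_j$ and the strict triangle inequality.
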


One of the main ingredients -- as in Section \ref{sec:vol} -- is the following estimate on the Lagrange multiplier.

%
%
\begin{prop}\label{graingrowth_L2 bounds lambda}
 Let $\chi_0$ be admissible. Given the approximate solutions $\chi^h$ obtained by Algorithm
\ref{MBO_volume} with thresholding values $\lambda_h$, we have the estimate
\begin{align*}
 \int_0^T  \lambda_h^2 \, dt \lesssim \left(1+T\right)\left( 1+E_0^4\right) h.
\end{align*}
\end{prop}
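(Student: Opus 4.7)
The plan is to adapt the four-step strategy from the proof of Proposition \ref{L2 bounds lambda} to the multi-phase setting, using the Euler-Lagrange equation \eqref{ELG_graingrowth} as the starting point and the energy-dissipation estimate \eqref{graingrowth_ED-estimate} as the a priori bound.

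First, I would square the Euler-Lagrange equation \eqref{ELG_graingrowth} to obtain, for any $\xi \in C_0^\infty(\R^d,\R^d)$,
\[
\frac{\lambda_n^2}{h}\left(\int (\nabla\cdot \xi)(1-\chi^n_0)\,dx\right)^2 \lesssim \left[\delta E_h(\chi^n,\xi)\right]^2 + \left[\delta E_h(\,\cdot\, - \chi^{n-1})(\chi^n,\xi)\right]^2.
\]
The goal is to bound the right-hand side by $\frac{1}{h}\|\xi\|_{W^{1,\infty}}^2$ times factors controlled by $E_0$ and a summable dissipation term, while choosing $\xi$ so that the integral on the left-hand side is bounded below by $1/2$.

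For the first term on the right, I would reuse Step 1 of the proof of Proposition \ref{L2 bounds lambda} applied separately to each pair $(i,j)$ (with the surface tensions $\sigma_{ij}$ absorbed into a constant) together with the approximate monotonicity of $E_h$. This yields $|\delta E_h(\chi^n,\xi)| \lesssim \|\nabla\xi\|_\infty E_h(\chi^n) \lesssim \|\nabla\xi\|_\infty E_0$. For the second term, I would introduce dissipation measures $\mu_n^{ij}$ built out of $G_{h/2}\ast(\chi_i^n-\chi_i^{n-1})$ in the spirit of Step 2 of Proposition \ref{L2 bounds lambda}, carry out the same commutator estimates (using $|z|G(z)\lesssim G_2(z)$ and antisymmetry of $\nabla G$), and conclude
\[
h\sum_{n=1}^N \left[\delta E_h(\,\cdot\, - \chi^{n-1})(\chi^n,\xi_n)\right]^2 \lesssim \sup_n \|\xi_n\|_{W^{1,\infty}}^2 (1+E_0^2),
\]
where the energy-dissipation estimate \eqref{graingrowth_ED-estimate} together with \eqref{sigma<0} (which ensures $-E_h$ defines a norm on processes, dominating the individual $\mu_n^{ij}$) absorbs the sum.

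Next, observing that $\int(1-\chi^n_0)\,dx=1$ and that $E_h^{(2)}(\chi^n_0) := \frac{1}{\sqrt h}\int (1-\chi^n_0)G_h\ast\chi^n_0\,dx \leq \tfrac12 E_h(\chi^n)\leq \tfrac12 E_0$, I would invoke Step 3 of the proof of Proposition \ref{L2 bounds lambda} verbatim on the characteristic function $1-\chi^n_0$ to produce $\xi_n \in C_0^\infty(\R^d,\R^d)$ with
\[
\int (\nabla\cdot\xi_n)(1-\chi_0^n)\,dx \geq \tfrac12, \qquad \|\xi_n\|_{W^{1,\infty}}\lesssim 1+E_0.
\]
Plugging this $\xi_n$ into the squared Euler-Lagrange inequality and summing over $n=1,\dots,N=T/h$ gives
\[
\sum_{n=1}^N \lambda_n^2 \lesssim \sup_n \|\xi_n\|_{W^{1,\infty}}^2 (TE_0^2 + 1+E_0^2) \lesssim (1+T)(1+E_0^4),
\]
which, multiplied by $h$, is the desired estimate after passing to the piecewise constant interpolation.

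The main obstacle is the estimate in Step 2: unlike the clean quadratic form $D_h$ in Section \ref{sec:vol}, the dissipation functional here is $-E_h$ evaluated on processes, which couples all phases through the matrix $\sigma$ and includes the solid-vapor contribution. The point is that \eqref{sigma<0} and \eqref{sigma<2} make $-E_h$ coercive over $\{\omega\colon \sum_i\omega_i=0\}$, so that one can still extract suitable per-phase dissipation measures, apply the commutator and antisymmetry arguments componentwise, and close the estimate exactly as in the two-phase case.
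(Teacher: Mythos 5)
Your proposal is correct and follows essentially the same route as the paper, which itself proves this proposition by adapting Steps 1--3 of Proposition \ref{L2 bounds lambda}: squaring \eqref{ELG_graingrowth}, bounding the two first-variation terms via \eqref{graingrowth_ED-estimate} and the coercivity of $-E_h$ on processes guaranteed by \eqref{sigma<0}, and running the Step 3 construction of $\xi$ on $1-\chi_0^n$, which has unit volume and whose two-phase energy is controlled by $E_0$. The additional details you supply (per-pair treatment of $\delta E_h$, per-phase dissipation measures, the bound $\frac1{\sqrt h}\int(1-\chi_0^n)G_h\ast\chi_0^n\,dx\lesssim E_0$) are exactly the ones the paper leaves implicit.
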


\begin{proof}
We can adapt the proof of Proposition \ref{L2 bounds lambda}.
We square the Euler-Lagrange equation and obtain an equation similar to \eqref{ELGstepn} but with $\chi^n$ replaced by $1-\chi_0^n$ on the left-hand side.
The estimates on $\delta E$ and $\delta D$, i.\ e.\ Steps 1 and 2 work analogously with help of the a priori estimate \eqref{graingrowth_ED-estimate}.
In Step 3 we choose the test vector field $\xi$ to satisfy
\begin{align*}
 \int \left(\nabla\cdot \xi\right) \left(1-\chi_0\right) dx &\geq \frac 12\quad \text{and}\\
\left\|  \xi\right\|_{W^{1,\infty}}		&\lesssim 1+ E_0.
\end{align*}
The construction of $\xi$ is the same as there but with $\chi$ replaced by $1-\chi_0$, which has a fixed volume $\int(1-\chi_0)\,dx = 1$.
\end{proof}

\subsection{Compactness}
%
%
\begin{prop}[Compactness]\label{graingrowth_compactness}
There exists a subsequence $h\searrow 0$ and an admissible 
$\chi\in L^1((0,T)\times\R^d, \{0,1\}^{\numphases})$ such that
\begin{align}\label{graingrowth_comp_conv_1}
 \chi^h\longrightarrow \chi \quad \text{in } L^1( (0,T)\times\R^d).
\end{align}
Moreover,
\begin{align}\label{graingrowth_comp_conv_2}
 \chi^h\longrightarrow \chi \quad \text{a.\ e. in } (0,T)\times \R^d
\end{align}
and $\chi(t)\in BV(\R^d,\{0,1\}^{\numphases+1})$, $\int\left(1-\chi_0\right)dx =1 $ 
and $1-\chi_0\subset \subset \R^d$ for a.\ e. $t\in (0,T)$.
\end{prop}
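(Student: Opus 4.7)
The plan is to follow the blueprint of Proposition \ref{compactness} from Section \ref{sec:vol}, proceeding in three stages: a space-time modulus of continuity, a tightness estimate for the solid phase, and then the Riesz-Kolmogorov-type extraction together with the identification of the properties of the limit.

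First, I would establish the bound
\begin{align*}
 \int_0^T \!\!\int \left| \chi^h_i(x+\delta e, t+\tau) - \chi^h_i(x,t) \right| dx\, dt \lesssim (1+T)\, E_0 \left( \delta + \tau + \sqrt{h} \right)
\end{align*}
for each component $i \in \{0, 1, \dots, \numphases\}$, adapting Lemmas 2.4 and 2.5 of \cite{LauOtt15} to the present multi-phase setting. Assumption \eqref{sigma<0} (extended as in \eqref{extendsigma}) ensures that $\sqrt{-E_h}$ defines a seminorm on the process space $\{\omega : \sum_i \omega_i = 0\}$, so the calculations of \cite{LauOtt15} go through verbatim when fed with the a priori estimate \eqref{graingrowth_ED-estimate}.

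Second, I would establish tightness for the solid phase $1 - \chi_0^h$. Since $1 - \chi_0^h = \{\min_{i \geq 1} \phi_i < \phi_0 + \lambda_h\}$ is defined by a \emph{single} threshold-type condition and has fixed volume $1$, this reduces to essentially the two-phase situation of Proposition \ref{tightness}. I would split iterations into \emph{good} ones ($|\lambda_h| < \tfrac14$) and \emph{bad} ones, and prove analogues of Lemmas \ref{lem_bad_iteration} and \ref{lem_good_iteration}: the enclosing radius at most triples on bad iterations, and grows by at most $O(\sqrt{h}\, |\lambda_h|)$ on good ones. Proposition \ref{graingrowth_L2 bounds lambda} controls both the number of bad iterations (via Chebyshev) and the cumulative growth over good iterations (via Cauchy-Schwarz), giving a uniform radius $R^\ast = R^\ast(d, E_0, T)$ such that $\{1 - \chi_0^h(t)\} \subset B_{R^\ast}$ for all $t \in [0,T]$.

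Combining these two estimates via the Riesz-Kolmogorov argument adapted to $\R^d$ (as in Proposition 2.1 of \cite{LauOtt15}, now valid on all of $\R^d$ thanks to the tightness), I obtain an $L^1$-convergent subsequence $\chi_i^h \to \chi_i$, and after passing to a further diagonal subsequence also a.e. convergence. The admissibility $\sum_{i=0}^{\numphases} \chi_i = 1$, the volume identity $\int (1-\chi_0) \, dx = 1$, and the inclusion $\{1-\chi_0(t)\} \subset B_{R^\ast}$ all pass to the $L^1$-limit. Finally, the uniform bound $E_h(\chi^h(t)) \leq E_0$ combined with the approximate monotonicity of the multi-phase energies $E_h$ (the analogue of \eqref{appr_mon} proved in \cite{EseOtt14}) yields $\chi(t) \in BV(\R^d, \{0,1\}^{\numphases+1})$ for a.e.\ $t$ via lower semicontinuity in the $\Gamma$-convergence $E_h \to \tfrac1{\sqrt{\pi}} E$.

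The main obstacle is the adaptation of Lemma \ref{lem_bad_iteration} to the multi-phase linearized energy $L_h$ defined in \eqref{graingrowth_def_L}. One must show that moving a chunk of solid mass from the far field $\R^d \setminus B_{3R}$ into a free spot inside $B_R$ strictly decreases $L_h$. The correct geometric move is to transplant the mass while keeping its grain label $i$ fixed, so that $\int \tilde \chi_i \phi_i \, dx$ strictly decreases due to the analogue of \eqref{estimate on phi} for $\phi_i$, while all other $\int \tilde \chi_j \phi_j \, dx$ with $j \neq i$ are unaffected. A subtlety is that $\lambda_h$ here may be negative; however, Proposition \ref{graingrowth_L2 bounds lambda} bounds $|\lambda_h|$ directly in $L^2$, so the bad/good dichotomy is phrased in terms of $|\lambda_h|$ rather than $|\lambda_h - \tfrac12|$ and the counting argument carries through unchanged.
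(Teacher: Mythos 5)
Your overall architecture coincides with the paper's: the space-time continuity estimate fed by \eqref{graingrowth_ED-estimate}, the good/bad dichotomy in terms of $|\lambda_n|$ controlled by Proposition \ref{graingrowth_L2 bounds lambda}, tripling of the radius on bad iterations and $O(\h\,|\lambda_n|)$ growth on good ones, and the Riesz--Kolmogorov extraction. However, your resolution of what you correctly single out as the main obstacle --- the multi-phase analogue of Lemma \ref{lem_bad_iteration} --- contains a genuine gap. First, transplanting a chunk $U=\Omega_i\setminus B_{3R}$ into a vacant spot $\widetilde U\subset B_R$ while keeping the label $i$ does \emph{not} leave the other terms of $L_h$ in \eqref{graingrowth_def_L} unaffected: the vacated region joins the vapor phase and the filled region leaves it, so $\chi_0$ changes and with it the term $\int \chi_0\,\phi_0\,dx$. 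Second, and more seriously, $\phi_i$ alone does not satisfy the analogue of \eqref{estimate on phi}: writing $\phi_i = 1-\sum_{j\geq1}(1-\sigma_{ij})\,G_h\ast\chi_j^0$, the signs of the coefficients $1-\sigma_{ij}$ are not controlled (only $0<\sigma_{ij}<2$ is assumed), so $\phi_i$ may be \emph{larger} inside $B_R$ than outside $B_{3R}$ (e.g.\ all $\sigma_{ij}=\tfrac32$ gives $\phi_i=1+\tfrac12\phi_0$); hence the single term $\int\chi_i\,\phi_i\,dx$ need not decrease under your move.

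The paper's Lemma \ref{graingrowth_lem_bad_iteration} performs exactly your geometric move but estimates the \emph{combined} change
\begin{align*}
L_h(\phi,\chi)-L_h(\phi,\tilde\chi)
= \frac2\h\int\bigl(\chara_{\widetilde U}-\chara_{U}\bigr)\bigl(\phi_0-\phi_i\bigr)\,dx
= \frac2\h\sum_{j\geq1}\bigl(2-\sigma_{ij}\bigr)\int G_h\ast\chi_j^0\,\bigl(\chara_{\widetilde U}-\chara_{U}\bigr)\,dx ,
\end{align*}
where the constant part of $\phi_0-\phi_i$ drops out since $|U|=|\widetilde U|$. Positivity then follows because each coefficient $2-\sigma_{ij}$ is strictly positive --- this is precisely where the modeling assumption \eqref{sigma>12sigmamax}, normalized to \eqref{sigma<2}, enters --- and each $G_h\ast\chi_j^0$ does satisfy \eqref{estimate on phi}. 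That your argument never invokes \eqref{sigma<2} is a symptom of the gap. The remainder of your plan (good iterations via the reduction $\phi_0-\phi_1\leq G_h\ast(\sum_{j\geq1}\chi_j^0-\chi_0^0)$ and the two-phase half-space comparison, the Chebyshev counting of bad iterations, and the identification of the limit) is in line with the paper.
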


As in Section \ref{sec:vol}, this follows from \cite{LauOtt15} and the following two lemmas, which
guarantee that the phases stay in a bounded region.
In the proofs, we will reduce the statements until we can apply Lemma \ref{lem_bad_iteration}
and Lemma \ref{lem_good_iteration}, respectively to conclude.

\begin{lem}\label{graingrowth_lem_bad_iteration}
$R_n$ may be chosen such that $R_n \leq 3 R_{n-1}$.
\end{lem}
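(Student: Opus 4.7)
The plan is to mimic the single-phase strategy of Lemma \ref{lem_bad_iteration}: reduce to $n = 1$ and write $R := R_0$, then construct an admissible competitor $\tilde\chi$ that preserves the total solid volume $\int(1-\chi_0)\,dx = 1$ yet strictly decreases the linearized energy $L_h(\phi, \cdot)$ of (\ref{graingrowth_def_L}), thereby contradicting the minimization characterization in Lemma \ref{graingrowth_la_MM}.

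The first task is to derive the multi-phase replacement for the pointwise estimate $\min_{\overline B_R}\phi > \max_{\R^d \setminus B_{3R}}\phi$. Using admissibility $\chi_0^0 = 1 - \sum_{l \geq 1}\chi_l^0$ and $G_h\ast 1 = 1$, a short algebraic manipulation yields
\begin{equation*}
\phi_i - \phi_0 = 1 + \sum_{l = 1}^{\numphases}(\sigma_{il} - 2)\, G_h \ast \chi_l^0, \qquad i \geq 1.
\end{equation*}
By the normalization (\ref{sigma<2}) every coefficient $\sigma_{il} - 2$ is strictly negative; combined with the strict radial monotonicity of the Gaussian $G_h$ and the inclusion $\{\chi_l^0 = 1\} \subset B_R$ (which gives $|x-z| < 2R \leq |y-z|$ for any $x \in B_R$, $y \in \R^d \setminus B_{3R}$, $z \in B_R$, and hence $G_h \ast \chi_l^0(x) > G_h \ast \chi_l^0(y)$), this implies
\begin{equation*}
(\phi_i - \phi_0)(x) < (\phi_i - \phi_0)(y) \qquad \forall\, i \geq 1, \ x \in B_R, \ y \in \R^d \setminus B_{3R}.
\end{equation*}

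Next, suppose for contradiction that $U := \bigl(\bigcup_{i \geq 1}\Omega_i^1\bigr) \setminus B_{3R}$ has positive measure, and for $y \in U$ let $k(y) \geq 1$ denote the index satisfying $\chi_{k(y)}^1(y) = 1$. Since $\bigl|\bigcup_{i \geq 1}\Omega_i^1\bigr| = 1 \leq |B_R|$, we may pick $\tilde U \subset \Omega_0^1 \cap B_R$ with $|\tilde U| = |U|$ and a measure-preserving bijection $T : \tilde U \to U$. Define $\tilde\chi$ to agree with $\chi^1$ on $\R^d \setminus (U \cup \tilde U)$, set $\tilde\chi_0 = 1$ on $U$, and set $\tilde\chi_{k(Tx)} = 1$ on $\tilde U$; then $\tilde\chi$ is admissible with $\int(1 - \tilde\chi_0)\,dx = 1$, so it competes with $\chi^1$ in (\ref{graingrowth_MM_constraint}). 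A direct computation using (\ref{graingrowth_def_L}) together with the change of variables $y = Tx$ yields
\begin{equation*}
L_h(\phi, \chi^1) - L_h(\phi, \tilde\chi) = \frac{2}{\sqrt h}\int_{\tilde U}\bigl[(\phi_{k(Tx)} - \phi_0)(Tx) - (\phi_{k(Tx)} - \phi_0)(x)\bigr]\, dx,
\end{equation*}
and the integrand is strictly positive by Step~1, contradicting the minimality of $\chi^1$. Hence $|U| = 0$, which gives $R_n \leq 3 R_{n-1}$.

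The main obstacle compared with the single-phase setting is that neither $\phi_0$ nor $\phi_i$ taken alone satisfies $\phi(x) > \phi(y)$ for $x \in B_R$, $y \in \R^d \setminus B_{3R}$: the vapor contribution $G_h \ast \chi_0^0$ moves in the opposite direction because $\chi_0^0 \to 1$ at infinity, and the two contributions partially cancel inside each $\phi_i$. What rescues the argument is that the thresholding rule in Algorithm \ref{MBO_graingrowth} depends only on the combination $\phi_i - \phi_0$, in which the vapor part collapses to the constant $1$ via $G_h \ast 1 = 1$, and the strict upper bound $\sigma_{ij} < 2$ inherited from the solid-vapor surface tension assumption (\ref{sigma>12sigmamax}) supplies exactly the negative coefficients needed to turn the monotonicity of $G_h$ into monotonicity of $\phi_i - \phi_0$.
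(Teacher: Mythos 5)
Your proof is correct and follows essentially the same route as the paper: the key identity $\phi_i-\phi_0 = 1+\sum_l(\sigma_{il}-2)\,G_h\ast\chi_l^0$, the use of the normalization $\sigma_{ij}<2$ to get strict monotonicity of $\phi_i-\phi_0$ between $B_R$ and $\R^d\setminus B_{3R}$, and the volume-preserving relocation of the escaped solid into $B_R$ to contradict minimality of the linearized energy are exactly the paper's argument. The only (harmless) difference is that you treat all escaped phases at once via a measure-preserving rearrangement that transports the phase labels, whereas the paper relocates one phase at a time while leaving the others fixed.
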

\begin{proof}
 For the sake of notational simplicity we will assume w.\ l.\ o.\ g.\ $n=1$.
 We want to give a similar, energy-based argument as in the proof of Lemma \ref{lem_bad_iteration}. 
 Let $1-\chi_0^0 $, the crystal at time $0$, be located inside $B_R$.
 We write $\Omega_1,\dots,\Omega_\numphases$ for the update in Algorithm \ref{MBO_graingrowth},
 write $\chi_i = \chara_{\Omega_i}$ and assume that 
 $U := \Omega_1\setminus B_{3R}$ has positive volume and construct $\widetilde U \subset B_R $
 with the same volume as $U$ as in the proof of Lemma \ref{lem_bad_iteration}.
 Then we define the competitor 
 $\tilde \chi$ by setting $\widetilde \Omega_1 := ( \Omega_1 \setminus U) \cup \widetilde U $
 leaving the phases $\Omega_i$, $i\geq 2$ unchanged so that 
 $\widetilde \Omega_0 := ( \Omega_0 \setminus \widetilde U) \cup U$.
 Recalling the linearized energy defined in (\ref{graingrowth_def_L}), we see that
 \begin{align*}
  L_h(\phi,\chi) - L_h(\phi,\tilde \chi)
  = \frac2\h \int \left( \chi_0 - \tilde \chi_0\right) \phi_0 
  + \left( \chi_1 - \tilde \chi_1\right) \phi_1\,dx.
 \end{align*}
By construction we have $\chi_0-\tilde\chi_0 
= -( \chi_1 - \tilde \chi_1) = \chara_{\widetilde U}-\chara_{U}$.
Rewriting $\phi_1$ in the form
$$
\phi_1 = \left( 1- \sum_{j\geq 1} G_h\ast \chi_j^0 \right) 
+\sum_{j\geq 1} \sigma_{1j} \,G_h\ast \chi_j^0,
$$
we thus have
\begin{align*}
 L_h(\phi,\chi) - L_h(\phi,\tilde \chi)
 =& \frac2\h \int \left( \phi_0 -\phi_1\right) \left( \chara_{\widetilde U}-\chara_{U}\right) dx\\
 =& \frac2\h \sum_{j=1}^\numphases \left( 2 -\sigma_{1j}\right)
 \int G_h\ast \chi_{j}^0 \left( \chara_{\widetilde U}-\chara_{U}\right)  dx.
\end{align*}
Note that by the normalization (\ref{sigma<2}), which guarantees the strict triangle inequality for the extended surface tensions,
each prefactor in the sum is strictly positive, furthermore
we have (\ref{estimate on phi}) for $G_h\ast \chi_j^0$ playing the role of $\phi$ there and by construction of
$\widetilde U$ the right-hand side term is positive which gives the desired contradiction.
\end{proof}

\begin{lem}\label{graingrowth_lem_good_iteration}
 Over `good' iterations we have the estimate
\begin{align*}
 R_n \leq R_{n-1} + C \sqrt{h}| \lambda_n|.
\end{align*}
\end{lem}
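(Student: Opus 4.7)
The plan is to mirror the strategy of Lemma \ref{lem_good_iteration} --- half-space comparison combined with a one-dimensional Gaussian estimate --- after a short algebraic reduction that collapses the multiphase thresholding criterion to a single-function condition on $\phi_0$. First I would rewrite the thresholding step of Algorithm \ref{MBO_graingrowth} as: $x \in (\Omega_0^n)^c$ iff $\min_{i \geq 1} \phi_i(x) < \phi_0(x) + \lambda_n$. Using the admissibility identity $G_h \ast \chi_0^{n-1} = 1 - \phi_0$, one computes
\[
 \phi_i - \phi_0 = 1 - \sum_{j \geq 1}(2-\sigma_{ij})\, G_h \ast \chi_j^{n-1},
\]
and then invokes the non-negativity $\sigma_{ij} \geq 0$ from (\ref{sigma>0}) together with $\sum_{j \geq 1} G_h \ast \chi_j^{n-1} = \phi_0$ to obtain the key lower bound
\[
 \min_{i \geq 1}\bigl(\phi_i - \phi_0\bigr) \geq 1 - 2\phi_0.
\]
Combining this with the thresholding criterion forces the single-function implication $x \in (\Omega_0^n)^c \Rightarrow \phi_0(x) > (1-\lambda_n)/2$.

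With the problem reduced to a pointwise bound on $\phi_0 = G_h \ast (1-\chi_0^{n-1})$, I would then proceed essentially as in Lemma \ref{lem_good_iteration}. For an arbitrary outward direction $e \in S^{d-1}$, the inclusion $\{1-\chi_0^{n-1}=1\} \subset B_{R_{n-1}}$ allows comparison with the half-space $H = \{y \cdot e < R_{n-1}\}$ tangent to $\partial B_{R_{n-1}}$. Choosing coordinates so that $e = e_1$ and exploiting the tensorization of $G_h$ yields, for $x \cdot e \geq R_{n-1}$,
\[
 \phi_0(x) \leq G_h \ast \chara_H(x) = \tfrac12 - \int_0^{x \cdot e - R_{n-1}} G_h^1(s)\, ds.
\]
Combined with the reduction above, a point in direction $e$ outside $B_{R_{n-1}}$ can belong to $(\Omega_0^n)^c$ only if $\int_0^{x \cdot e - R_{n-1}} G_h^1(s)\,ds < \lambda_n/2$. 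For $\lambda_n \leq 0$ this is impossible, so $R_n \leq R_{n-1}$ and the claim holds trivially; for $\lambda_n > 0$ in a `good' iteration (i.e.\ $|\lambda_n|$ below the same universal threshold used in Lemma \ref{lem_good_iteration}), the integral condition constrains $(x \cdot e - R_{n-1})/\sqrt{h}$ to lie in a bounded interval on which $G^1$ is uniformly positive, upgrading to the linear bound $R_n - R_{n-1} \lesssim \sqrt{h}\,|\lambda_n|$, exactly as in the closing lines of Lemma \ref{lem_good_iteration}.

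The main obstacle is the algebraic reduction in the first paragraph; once the multiphase thresholding has been collapsed to a condition on $\phi_0$ alone, the remaining two steps are near-verbatim adaptations of the two-phase proof. It is worth noting that this reduction requires only the non-negativity of the surface tensions and \emph{not} the sharper normalization (\ref{sigma<2}); the strict inequality $\sigma_{ij}<2$ enters instead in the `bad'-iteration Lemma \ref{graingrowth_lem_bad_iteration}, where strict positivity of the weights $2 - \sigma_{ij}$ is essential for the energy-exchange argument.
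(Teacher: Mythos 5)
Your proposal is correct and follows essentially the same route as the paper: reduce the multiphase thresholding condition to the single inequality $\phi_i-\phi_0\geq 1-2\phi_0$ (the paper writes this as $\phi_0-\phi_1\leq G_h\ast\bigl(\sum_{j\geq 1}\chi_j^0-\chi_0^0\bigr)$ after relabeling so that $i=1$, using exactly the non-negativity $\sigma_{ij}\geq 0$ as you do), and then run the half-space comparison of Lemma \ref{lem_good_iteration} on $\phi_0=G_h\ast(1-\chi_0^{n-1})$. Your observations that $\lambda_n\leq 0$ makes the estimate trivial and that (\ref{sigma<2}) is only needed for the `bad'-iteration lemma are both consistent with the paper's argument.
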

\begin{proof}
 As before, we can ignore the index $n$ and set $n=1$ for convenience.
 Let $1-\chi_0^0 $, the crystal at time $0$, be located inside some ball $B_{R_0}$.
 As in the proof of Lemma \ref{lem_good_iteration}, via a comparison argument, we want to prove that
 $1-\chi_0 $, the crystal at time $h$, does not intersect the half space $\{x\cdot e > R_0+C\h\}$
 for any choice of $e\in \sphere$.
 That means, we want to prove the existence of a constant $C<\infty$ such that
 \begin{align*}
  \phi_0 + \lambda < \phi_i\quad \text{for all } i\geq 1 \quad \text{in }\{x\cdot e > R_0+C\h\}.
 \end{align*}
 By rotational symmetry we may again restrict to the case $e=e_1$.
 Since we may relabel the phases inside the crystal, we may also prove the inequality only for $i=1$.
 In that case, writing $x=(x_1,x')\in \R^d$, we have
 \begin{align*}
  \left(\phi_0 - \phi_1 \right) (x) 
  \leq   G_h\ast\left(\sum_{i\geq 1}  \chi_{i}^0 - \chi_{0}^0\right).
 \end{align*}
 Thus, writing $\chi^0 := \sum_{i\geq 1}  \chi_{i}^0$, we reduced the problem to the two-phase analogue
 which we handled in Lemma \ref{lem_good_iteration}.
 Indeed, using the same comparison argument, i.\ e. using $\chi^0 \leq \chara_H$, where $H=\{x_1< R_0\}$
 is a half space tangent to $\partial B_{R_0}$ we find
 \begin{align*}
  \left(\phi_0 - \phi_1 \right) (x) \leq 2  \int_0^{x_1-R_0} G_h^1(z_1)\,dz_1.
 \end{align*}
 Since for a `good' iteration $\lambda$ is bounded, as in the proof 
 of Lemma \ref{lem_good_iteration} we can find a constant $C<\infty$,  so that
 \begin{align*}
  \left(\phi_0 - \phi_1 \right) (x) \leq 2   \frac{R_1-R_0}\h \min_{|z_1|\leq C} G^1(z_1) \leq |\lambda|
 \end{align*}
 which concludes the proof.
\end{proof}

 \subsection{Convergence}
 The following lemma is the main technical ingredient of the convergence proof.
 It is slightly more general than our set-up here since it allows for several Lagrange-multipliers so that
 the order parameter becomes $\sigma\, u + \lambda$ instead of $\sigma\,u$, where $u=G\ast \chi$ and $\lambda\in \R^\numphases$.
 The changes in the statement w.\ r.\ t. Lemma 4.5 in \cite{LauOtt15} are of the same form as before in Lemma \ref{1d lemma} except for
 a lower order term, $|\lambda|$, which can be absorbed by the term $ \frac{r}{s^2} \frac{|\lambda-\tilde \lambda|^2}{\h}$ and terms of order $\h$.
 \begin{lem}\label{graingrowth_1dlemma}
 Let $N\in \N$, $I\subset \R$ be an interval, $h>0$, $\eta\in C_0^\infty(\R)$, $0\leq \eta \leq 1$, radially non-increasing and
 $u,\, \tilde u \colon I \to \R^N$ be two maps into the standard simplex $\{U_i\geq 0, \sum_i U_i =1\}\subset \R^N$.
 Let $\sigma\in \R^{N\times N}$ be admissible in the sense of (\ref{sigma>0})-(\ref{sigma<0})
 and $\lambda,\,\tilde \lambda \in \R^N$ with $|\lambda| \leq \frac18$.
 Define $\phi:= \sigma\, u+\lambda $, 
 $\chi_i := \chara_{\{\phi_i > \phi_j \; \forall j\neq i\}}$ and $\tilde \phi,\, \tilde \chi_i $ in the same way.
 Then
\begin{align*}
  \frac{1}{\sqrt h}\int \eta \left|\chi-\tilde \chi\right|  dx_1
  \lesssim \frac1s \varepsilon^2 + s + |\lambda|
  +\frac1{s^2}  \frac{1}{\sqrt h}\int \eta \left| u -\tilde u\right|^2 dx_1 + \frac{r}{s^2} \frac{|\lambda-\tilde \lambda|^2}{\h}
\end{align*}
for $s\ll1$, where
\begin{align*}
 \varepsilon^2 := \frac1\h \int_{\frac13 \leq u_1 \leq \frac23} \left(\h \partial_1 u_1 - \overline c \right)_-^2 dx_1
 + \frac1\h \sum_{j\geq 3} \int \eta \left[ u_j \wedge (1-u_j)\right] dx_1.
\end{align*}
\end{lem}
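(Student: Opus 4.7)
The plan is to reduce the claim to Lemma 4.5 of \cite{LauOtt15} — the analogous statement without Lagrange multipliers — via a shifting argument in the spirit of the reduction of Lemma \ref{1d lemma} to Corollary 4.3 of \cite{LauOtt15}. The key observation is that the indicators $\chi_i = \chara_{\{\phi_i > \phi_j \;\forall j\neq i\}}$ are invariant under adding any multiple of the all-ones vector $\mathbf 1$ to $\phi$. Since the admissibility condition (\ref{sigma<0}) makes $\sigma|_{\mathbf 1^\perp}$ injective with operator norm of its inverse controlled by $\underline\sigma^{-1}$, one can pick a scalar $c$ with $\lambda - c\mathbf 1 \in \sigma(\mathbf 1^\perp)$ and set $\delta := (\sigma|_{\mathbf 1^\perp})^{-1}(\lambda - c\mathbf 1)$ with $|\delta|\lesssim |\lambda|$, and analogously $\tilde\delta$. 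Then the shifts $v := u + \delta$ and $\tilde v := \tilde u + \tilde\delta$ satisfy $\sigma v + c\mathbf 1 = \sigma u + \lambda$ and similarly for $\tilde v$, so $\chi = \chara_{\arg\max \sigma v}$ and $\tilde\chi = \chara_{\arg\max \sigma \tilde v}$, i.e.\ both indicators are now of the form required by Lemma 4.5 of \cite{LauOtt15}.

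Applying that lemma to $v,\tilde v$ would yield
\begin{align*}
\frac{1}{\sqrt h}\int \eta |\chi - \tilde\chi| \lesssim \frac{1}{s}\varepsilon_v^2 + s + \frac{1}{s^2}\frac{1}{\sqrt h}\int \eta |v - \tilde v|^2,
\end{align*}
with $\varepsilon_v^2$ the energy excess written in terms of $v$. Expanding $|v-\tilde v|^2 \lesssim |u-\tilde u|^2 + |\delta-\tilde\delta|^2 \lesssim |u-\tilde u|^2 + |\lambda-\tilde\lambda|^2$ and using $|\supp\eta|\lesssim r$ produces the claimed term $\frac{r}{s^2}\frac{|\lambda-\tilde\lambda|^2}{\sqrt h}$, exactly as in Lemma \ref{1d lemma}. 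What remains is to compare $\varepsilon_v^2$ to $\varepsilon^2$ and to absorb the resulting discrepancy into the extra summand $|\lambda|$ on the right-hand side.

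Since $v = u + \delta$ differs from $u$ by a constant vector with $|\delta|\lesssim|\lambda|$, the integrand $(\sqrt h\,\partial_1 v_1 - \overline c)_-^2$ equals $(\sqrt h\,\partial_1 u_1 - \overline c)_-^2$ pointwise, and $v_j\wedge(1-v_j)$ differs from $u_j\wedge(1-u_j)$ only on a set of width $O(|\lambda|)$ near $\{u_j\in\{0,1\}\}$. The only substantive change is therefore the displacement of the strip $\{\tfrac13\le v_1\le\tfrac23\}$ by $O(|\lambda|)$ relative to $\{\tfrac13\le u_1\le\tfrac23\}$. The main technical obstacle will be bounding this shift contribution; the cleanest route is to work throughout with a slightly enlarged band $\{\tfrac13 - C|\lambda| \le u_1 \le \tfrac23 + C|\lambda|\}$ and absorb the extra boundary mass into the parameter $s$, using the hypothesis $|\lambda|\le\tfrac18$ to ensure that $v,\tilde v$ stay in a neighborhood of the simplex where Lemma 4.5 of \cite{LauOtt15} remains applicable. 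A second minor point is a consistency check that projecting onto $\mathbf 1^\perp$ respects the smallness assumption $|\lambda|\le\tfrac18$ up to a harmless constant, which follows since we only need the bound $|\delta|\lesssim|\lambda|$ rather than $|\delta|\le\tfrac18$ itself.
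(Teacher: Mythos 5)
Your opening observation is sound and is the natural multi-phase analogue of the shift in Lemma \ref{1d lemma}: the $\chi_i$ only see differences $\phi_i-\phi_j$, and since (\ref{sigma<0}) makes $\sigma|_{\mathbf 1^\perp}$ injective with $\R^N=\sigma(\mathbf 1^\perp)\oplus\mathrm{span}(\mathbf 1)$, one can indeed write $\sigma u+\lambda=\sigma(u+\delta)+c\,\mathbf 1$ with $|\delta|\lesssim|\lambda|$ and $|\delta-\tilde\delta|\lesssim|\lambda-\tilde\lambda|$. The black-box reduction to Lemma 4.5 of \cite{LauOtt15} does not close, however. First, that lemma is stated for maps into the standard simplex, and $v=u+\delta$ is not such a map; asserting that it \lq remains applicable in a neighborhood of the simplex\rq\ is exactly the content one would have to re-prove, since its proof uses $\sum_j u_j=1$ and $u_j\geq 0$ in the step that produces $\sum_{j\geq3}[u_j\wedge(1-u_j)]\gtrsim s$. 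Second, and more seriously, the back-conversion of $\varepsilon_v^2$ into $\varepsilon^2$ fails quantitatively. Your claim that $v_j\wedge(1-v_j)$ differs from $u_j\wedge(1-u_j)$ only on a set near $\{u_j\in\{0,1\}\}$ is false: the pointwise difference is of order $|\delta_j|$ wherever $u_j\neq\tfrac12$ (take $u_j=0.3$, $\delta_j=0.05$), so this part of the energy excess picks up an error of order $r|\lambda|/\h$, which after multiplication by $1/s$ cannot be absorbed into $s+|\lambda|$. Likewise $\{\tfrac13\leq v_1\leq\tfrac23\}=\{\tfrac13-\delta_1\leq u_1\leq\tfrac23-\delta_1\}$, and since the constant in $|\delta|\lesssim|\lambda|$ depends on $\underline\sigma$ and $N$, this band need not be contained in $\{\tfrac13\leq u_1\leq\tfrac23\}$; on a long stretch where $u_1$ hovers just outside $[\tfrac13,\tfrac23]$ the integrand is $\approx\overline c^{\,2}$, so the displaced-band contribution is not controlled by the $\varepsilon^2$ appearing in the statement.

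The paper takes the direct route instead: it reruns the proof of Lemma 4.5 of \cite{LauOtt15} with $\lambda$ carried through. After normalizing $h=1$ and $\eta=\chara_I$, one fixes $i$, sets $w:=\min_{j\neq i}\phi_j-\phi_i$, uses $\{\chi_i\neq\tilde\chi_i\}\subset\{|w|<s\}\cup\{|w-\tilde w|\geq s\}$ together with $|w-\tilde w|\lesssim|u-\tilde u|+|\lambda-\tilde\lambda|$, and then shows on $\{|w|<s\}$, via the decomposition into the sets $E_j$ and the strict triangle inequality for $\sigma$, that $u_k\leq\tfrac12+\tfrac{s+|\lambda|}{\sigma_{\min}}$ for every $k$. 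The additive $|\lambda|$ in the conclusion is generated there, through the widening of the band around $u_1=\tfrac12$ to width $\sim s+|\lambda|$ (which stays inside $[\tfrac13,\tfrac23]$ because $s\ll1$ and $|\lambda|\leq\tfrac18$), not through a shift of $u$. To make your shifting idea rigorous you would have to redo that internal set-inclusion argument for the shifted data anyway, so the direct adaptation is the argument you actually need.
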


\begin{proof}
 As in the proof of Lemma 4.5 in \cite{LauOtt15} by scaling we can assume $h=1$ and by taking convex combinations, we may assume
 $\eta=\chara_I$ for some interval $I\subset \R$:
 \begin{align*}
  \int_I\left|\chi-\tilde \chi\right| 
  \lesssim \int_{|u_1-\frac12| \leq s + |\lambda|} \left( \partial_1 u_1 - \overline c \right)_-^2 
 &+ \frac1s \sum_{j\geq 3} \int_I \left[ u_j \wedge (1-u_j)\right]  + s + |\lambda|\\
  &+\frac1{s^2}  \int_I \left| u -\tilde u\right|^2+ \frac{|I|}{s^2}|\lambda-\tilde\lambda|^2.
 \end{align*}
We will prove 
\begin{align}\label{graingrowth_1dlemma_setinclusion}
 \{\chi\neq \tilde \chi\} \subset \Big \{|u_1-\tfrac12| \lesssim s +|\lambda| \Big \}
 \cup \Big \{\sum_{j\geq  3}\left[ u_j \wedge(1-u_j) \right] \gtrsim s \Big \} \cup  \{|u-\tilde u| +|\lambda| \gtrsim s \}.
\end{align}
We fix $i\in\{1,\dots,\numphases\}$ and define
$
 v := \min_{j\neq i} \phi_j  - \phi_i
$
as in \cite{LauOtt15}.
Then $\chi_i= \chara_{v>0}$ and
\begin{align*}
  \{\chi_i\neq \tilde \chi_i\} \subset \{|v|<s\} \cup \{|v-\tilde v| \geq s\}.
\end{align*}
We clearly have
\begin{align*}
 |v-\tilde v| \lesssim |u-\tilde u| + |\lambda-\tilde \lambda|
\end{align*}
so that our goal is to prove
\begin{align}\label{graingrowth_1dlemma_inequalities}
 |u_1-\tfrac12| \lesssim s +|\lambda| \quad \text{or} \quad \sum_{j\geq  3}\left[ u_j \wedge(1-u_j) \right] \gtrsim s \quad \text{on } \{|v|<s\},
\end{align}
which then implies (\ref{graingrowth_1dlemma_setinclusion}).
In order to prove (\ref{graingrowth_1dlemma_inequalities}) we claim that
\begin{align}\label{graingrowth_1dlemma_inequalities2}
 u_j \leq \frac12 + \frac{s+|\lambda|}{\sigma_{\min}}\quad \text{on }\{|v|<s\}.
\end{align}
First we show that (\ref{graingrowth_1dlemma_inequalities2}) implies (\ref{graingrowth_1dlemma_inequalities}).
By (\ref{graingrowth_1dlemma_inequalities2}) we have on the one hand
\begin{align*}
 u_1 \leq \frac12 + C(s+|\lambda|)\quad \text{on } \{|v|<s\}
\end{align*}
and on $\{|v|<s\} \cup \{ u_1 \leq \frac12 -C(s+|\lambda|) \} $ we have
\begin{align*}
 \sum_{j\geq  3}\left[ u_j \wedge(1-u_j) \right]  = \sum_{j\geq  3} u_j - \sum_{j\geq  3}\left(1-2 u_j \right)_-
 \geq \left( C - \frac1{\sigma_{\min}} - 2 \numphases \frac1{\sigma_{\min}}\right) s
 \gtrsim s
\end{align*}
if $C<\infty$ is large enough.
This implies (\ref{graingrowth_1dlemma_inequalities}).

We are left with proving the inequality (\ref{graingrowth_1dlemma_inequalities2}).
As in \cite{LauOtt15} we decompose the set
\begin{align*}
\{\left| v\right|<s\} = 
\bigcup_{j \neq i} E_j,
\quad E_j := \big\{\left| \phi_i-\phi_j\right|<s,\, \phi_j = \min_{k \neq i} \phi_k\big\}.
\end{align*}
For $k\neq \{i,j\}$ by the triangle inequality for the surface tensions we have on $E_j$ 
\begin{align*}
 \phi_j \leq \phi_k \leq \sigma_{jk}\left(1-2u_k\right) + \phi_j + \lambda_k - \lambda_j,
\end{align*}
so that
\begin{align*}
 u_k \leq \frac12 + \frac{\lambda_k - \lambda_j}{2\sigma_{jk}}.
\end{align*}
For $u_i$ we can use that $\phi_j-s\leq \phi_i$ on $E_j$ so that using the same chain of inequalities we have
\begin{align*}
 u_i \leq \frac12 + \frac{s+\lambda_i -\lambda_j}{2\sigma_{ij}}.
\end{align*}
Since also $\phi_i-s \leq \phi_j$ on $E_j$ we have the analogous inequality for $u_j$, which concludes (\ref{graingrowth_1dlemma_inequalities2}).
\end{proof}

As in \cite{LauOtt15}, we have the following convergence of the first variations of the (approximate) energies.
\begin{prop}[Energy and mean curvature; Prop.\ 3.1 in \cite{LauOtt15}]\label{deltaE_graingrowth}
 Under the convergence assumption (\ref{conv_ass_graingrowth})
 \begin{align*}
 &\lim_{h\to0}\int_0^T \delta E_h(\chi^h,\xi)\,dt \\
 & \qquad \qquad =  \frac1{\sqrt{\pi}}\sum_{i,j =0}^\numphases \sigma_{ij} \int_0^T \int \left(\nabla\cdot\xi - \nu \cdot \nabla \xi \,\nu \right) 
 \frac12\left(\left|\nabla\chi_i\right| + \left|\nabla\chi_j\right| - \left|\nabla(\chi_i+\chi_j)\right|\right)  dt\\
 \end{align*}
 for any $\xi \in C_0^\infty((0,T)\times \R^d,\R^d).$
\end{prop}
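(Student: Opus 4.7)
The plan is to adapt the proof of Proposition 3.1 in \cite{LauOtt15} (sketched above in our discussion of the two-phase case, culminating in \eqref{compute dE 2}) essentially verbatim to the multi-phase setting with the extended surface tension matrix \eqref{extendsigma}. Since $E_h$ in \eqref{graingrowth_Eh} is a finite linear combination (over $i,j$) of the two-phase-type functionals $\frac1{\sqrt h}\int \chi_i\, G_h\ast\chi_j\,dx$, the first variation decomposes as a finite sum, and it suffices to handle each such pair term separately and then reassemble. So I would first show that for each admissible $\chi$ and each pair $(i,j)$,
\begin{equation*}
\frac{d}{ds}\bigg|_{s=0}\frac1{\sqrt h}\int \chi_{i,s}\,G_h\ast\chi_{j,s}\,dx
= \frac1{\sqrt h}\int \nabla\xi:\chi_i\,(G_h\,Id - 2h\nabla G_h)\ast\chi_j\,dx + o(1),
\end{equation*}
by repeating the computation leading from \eqref{compute dE} to \eqref{compute dE 2}: integrate by parts to move the transport operator $\xi\cdot\nabla$ onto one factor, symmetrize the two terms, and Taylor expand $\xi(x)-\xi(x-z) = (z\cdot\nabla)\xi(x) + O(|z|^2)$, using the uniform boundedness of $E_h(\chi^h)$ to absorb the $O(|z|^2)$ remainder.

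Next I would pass to the limit $h\to 0$ after integration in time. The kernel $\frac1{\sqrt h}(G_h\,Id - 2h\nabla G_h)$ is exactly the anisotropic kernel whose action on characteristic functions converges, when the corresponding piece of the energy converges, to $\frac1{\sqrt\pi}(Id - \nu_{ij}\otimes\nu_{ij})$ weighted against the pair perimeter $\frac12(|\nabla\chi_i| + |\nabla\chi_j| - |\nabla(\chi_i+\chi_j)|)$. This is precisely the content of the measure-level convergence argument in Proposition 3.1 of \cite{LauOtt15}: pointwise convergence of $E_h$ on each fixed admissible $\chi$ together with the strengthened convergence \eqref{conv_ass_graingrowth} along the sequence $\chi^h$ forces the anisotropic measures $\chi^h_i(G_h\,Id - 2h\nabla G_h)\ast\chi^h_j\,dx\,dt$ to converge to the expected tangential projection against the pair-wise perimeter measure. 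Summing the $\sigma_{ij}$-weighted contributions (using the extended matrix in \eqref{extendsigma}, so that the row/column indexed by $0$ with $\sigma_{0i}=1$ accounts for the outer boundary term in \eqref{graingrowth_Eh}) reassembles the right-hand side in the statement.

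The main obstacle, as in the two-phase case, is the passage from pointwise-in-$\chi$ convergence to sequential convergence along $\chi^h$; without an extra assumption this fails because of possible hidden boundary, and it is exactly \eqref{conv_ass_graingrowth} that rules this out. Concretely, one combines the $\Gamma$-$\limsup$ equalities (pointwise convergence \`a la \eqref{EhtoE}) for each pair $(i,j)$ with a lower bound matching the total energy; the assumption \eqref{conv_ass_graingrowth} then forces each pair-wise approximate energy to converge to its limit along $\chi^h$, not just in aggregate. Once this is in hand, the anisotropic tangential projection in the limit follows from the same Fourier/structure argument as in \cite[Prop.\ 3.1]{LauOtt15}, since the presence of the outer phase $\chi_0$ does not affect the local structure of the kernel. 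No new ingredient beyond the extended matrix \eqref{extendsigma} and the assumption \eqref{conv_ass_graingrowth} is needed, so the proof reduces to invoking \cite[Prop.\ 3.1]{LauOtt15} term by term.
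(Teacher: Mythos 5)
Your proposal is correct and follows essentially the same route as the paper: the paper proves this proposition simply by invoking Proposition 3.1 of \cite{LauOtt15}, whose argument is exactly the one you describe --- the pairwise decomposition of $\delta E_h$, the expansion culminating in the analogue of \eqref{compute dE 2} with the anisotropic kernel $G_h\,Id - 2h\nabla G_h$, and the upgrade from pointwise convergence at fixed $\chi$ to convergence along $\chi^h$ via the strengthened assumption \eqref{conv_ass_graingrowth}, which forces each pairwise energy to converge individually.
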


Since we have both, the estimate on the Lagrange multiplier $\lambda$ in Proposition \ref{graingrowth_L2 bounds lambda}
and the important estimate Lemma \ref{graingrowth_1dlemma}, as in Section \ref{sec:vol}, we can adapt the techniques from \cite{LauOtt15} to recover the
normal velocity from the first variation of the dissipation functional.
\begin{prop}[Dissipation and normal velocity]\label{deltaD_graingrowth}
 There exist functions $V_i\colon (0,T)\times \R^d \to \R$ which are normal velocities in the sense of (\ref{graingrowth_v=dtX}).
 Given the convergence assumption (\ref{conv_ass_graingrowth}), $V_i\in L^2(\left| \nabla\chi\right|dt)$ and for any $\xi \in C_0^\infty((0,T)\times \R^d,\R^d)$ we have
 \begin{align*}
 \lefteqn{\lim_{h\to0}\int_0^T -\delta E_h(\,\cdot\,- \chi^h(t-h))(\chi^h(t),\xi)\,dt}\\
 &\qquad \qquad =   -\frac1{\sqrt{\pi}}\sum_{i,j =0}^\numphases \sigma_{ij} \int_0^T \int  \xi \cdot \nu_i\, V_i
\frac12\left(\left|\nabla\chi_i\right| + \left|\nabla\chi_j\right| - \left|\nabla(\chi_i+\chi_j)\right|\right)  dt.
 \end{align*}
\end{prop}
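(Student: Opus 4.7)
The plan is to follow the scheme of Proposition \ref{deltaD} from Section \ref{sec:vol}, substituting the multi-phase tools for the two-phase ones: the one-dimensional estimate Lemma \ref{graingrowth_1dlemma} in place of Lemma \ref{1d lemma}, the $L^2$-bound on the Lagrange multiplier from Proposition \ref{graingrowth_L2 bounds lambda} in place of Proposition \ref{L2 bounds lambda}, and the energy-dissipation estimate (\ref{graingrowth_ED-estimate}) in place of (\ref{ED-estimate}). Since $\sqrt{-E_h}$ is a norm on the process space by assumption (\ref{sigma<0}), it plays the role of $\sqrt{D_h}$ and the abstract arguments transfer.

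First I would construct the velocities $V_i$ as in Step 1 of the proof of Proposition \ref{deltaD}: the a priori bound (\ref{graingrowth_ED-estimate}) together with the compactness statement of Proposition \ref{graingrowth_compactness} shows that $\partial_t \chi_i$ is a Radon measure, and the strengthened convergence (\ref{conv_ass_graingrowth}) upgrades this to absolute continuity with respect to $|\nabla \chi_i|\,dt$, so that we can define $V_i$ as the Radon--Nikodym density and obtain $V_i\in L^2(|\nabla\chi_i|\,dt)$. The relation (\ref{graingrowth_v=dtX}) then holds by construction.

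Next I would turn to the identification of the limit of $-\delta E_h(\,\cdot\,-\chi^h(t-h))(\chi^h(t),\xi)$. Following the localization and mesoscopic-time-scale strategy of \cite{LauOtt15}, introduce $\tau=\alpha\sqrt{h}$ and the mesoscopic grid with step $\kmax = \alpha/\sqrt h$, and choose the shift $k_0\in\{1,\dots,\kmax\}$ not only so that the usual error-gap condition holds (the analogue of (\ref{grid gap})) but also so that the mesoscopic slices are \emph{good} iterations in the sense that $|\lambda_{\kmax l+k_0}|\leq \tfrac 18$ and that the $L^2$-average of the Lagrange multipliers along the mesoscopic slices is controlled by the full average. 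A counting argument analogous to that in Step 2 of the proof of Proposition \ref{deltaD}, using Proposition \ref{graingrowth_L2 bounds lambda} to bound the number of excluded shifts, guarantees that such a $k_0$ exists once $h$ is small enough. With this choice, Lemma \ref{graingrowth_1dlemma} is applicable on every mesoscopic slice, and the extra terms $|\lambda|$ and $r|\lambda-\tilde\lambda|^2/(s^2\sqrt h)$ produced by the multi-phase estimate are controlled by Proposition \ref{graingrowth_L2 bounds lambda} exactly as in the argument for the volume-preserving scheme.

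From here the proof follows the blueprint of \cite{LauOtt15} line by line: the one-dimensional estimate, integrated over the perpendicular directions and over time, yields a difference-quotient bound of the form
\[
 \int_0^T\!\int |\partial_t^\tau \chi^h|\,dx\,dt \lesssim \tfrac 1\alpha(\varepsilon^2+sT) + \tfrac{1}{s^2}\alpha^2 E_0 + \tfrac{1}{\alpha s^2}\tfrac{1}{\sqrt h}\int_0^T|\lambda_h|^2\,dt,
\]
in which the last term is $\mathcal O(\sqrt h)$ by the Lagrange multiplier bound, the energy excess $\varepsilon^2$ vanishes after sending $h\to 0$ and then shrinking the patch size $r\to 0$ via De Giorgi's Structure Theorem, and the remaining parameters $s$ and $\alpha$ are optimized at the end. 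The localization argument of Section 5 of \cite{LauOtt15} then produces the multi-phase limit in the form stated in the proposition, with the anisotropic projection $Id-\nu_i\otimes\nu_i$ replaced by the tangential quantity $\xi\cdot\nu_i$ against the symmetrized interface measure $\tfrac12(|\nabla\chi_i|+|\nabla\chi_j|-|\nabla(\chi_i+\chi_j)|)$.

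The main obstacle is the selection step for the mesoscopic shift: one has to simultaneously arrange the error-term estimate, the $L^2$-averaging of the Lagrange multipliers, and the \emph{pointwise} smallness $|\lambda_{\kmax l+k_0}|\leq \tfrac18$ that is required for Lemma \ref{graingrowth_1dlemma} to apply. The counting argument works precisely because Proposition \ref{graingrowth_L2 bounds lambda} gives an $L^2$-bound of order $h$, which via Chebyshev controls the number of bad slices uniformly in $h$, and this is the place where the Lagrange multiplier estimate is used in a crucial quantitative way; the remaining work is a careful bookkeeping adaptation of \cite{LauOtt15}.
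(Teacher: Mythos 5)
Your proposal is correct and follows essentially the same route as the paper: Step 1 constructs the $V_i$ by adapting the two-phase argument using the a priori estimate (\ref{graingrowth_ED-estimate}) and the strengthened convergence (\ref{conv_ass_graingrowth}), and Step 2 uses the $L^2$-bound on the Lagrange multiplier from Proposition \ref{graingrowth_L2 bounds lambda} to select a mesoscopic shift making all slices `good' ($|\lambda|\leq\tfrac18$), so that Lemma \ref{graingrowth_1dlemma} applies and the machinery of \cite{LauOtt15} carries over. Your write-up is in fact more explicit than the paper's about the counting argument and the resulting difference-quotient bound, but the substance is identical.
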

\begin{proof}
 \step{Step 1: Construction of the normal velocities and (\ref{graingrowth_v=dtX}).}
  As before in the two-phase case we can also adapt the proof of \cite{LauOtt15} in this case. Indeed, the argument there only makes use of the
  a priori estimate \eqref{graingrowth_ED-estimate} and the strengthened convergence \eqref{conv_ass_graingrowth}.
 \step{Step 2: Argument for (\ref{eq_deltaD}).}
 Our $L^2$-estimate on the Lagrange-multiplier $\lambda$ allows us to choose the shift of the mesoscopic time slices as in Step 2 of the proof of Proposition \ref{deltaD} such that 
 these slices are `good' in the sense that $|\lambda|\leq \frac18$.
 Now we may use our main technical ingredient, Lemma \ref{graingrowth_1dlemma}, for all mesoscopic time slices and hence we can apply the proof as in Section \ref{sec:vol} before.
\end{proof}
These two propositions conclude the proof of Theorem \ref{thm3}.

 \subsection*{Acknowledgements}
The authors want to thank Selim Esedo\u{g}lu and Felix Otto for fruitful discussions.  Additionally they would like to thank the referees for their helpful comments, which have improved the overall readability of the paper.
Finally, the second author would like to thank the MPI for their generous support in funding his travels and living accommodations while this research was undertaken.  
%
%
\printbibliography
\end{document}